\newcommand\after{\circ}
\newcommand\intersection{\cap} 
\newcommand\interior{\operatorname{int}}
\newcommand\union{\cup}
\newcommand\R{\mathbb{R}}
\newcommand\N{\mathbb{N}}
\newcommand\dee{\mathrm{d}}
\newcommand\secondterm{c_1}
\newcommand\conv{\operatorname{conv}}
\newcommand\leb{\operatorname{Leb}}
\newcommand\flags{\operatorname{Flags}}
\newcommand\flip{r}
\newcommand\polytope{P}
\newcommand\volht{{\operatorname{Vol}}}
\newcommand\regular{D}
\newcommand\polygons{\mathcal{P}_2}
\newcommand\gltwo{\GL_2(\R)}
\newcommand\orthtwo{\mathrm O_2(\R)}
\newcommand\Rplus{\R_+}
\DeclareMathOperator{\GL}{GL}
\newcommand\density{F}
\newcommand\dotprod[2]{\langle #1, #2 \rangle}
\newcommand\bigdotprod[2]{\bigl\langle #1, #2 \bigr\rangle}
\newcommand\slopedDelta{\tilde\Delta_\tau}
\newcommand\newnewdelta{\Delta}
\newcommand\newdelta{\Delta}
\newcommand\tildedelta{\Delta}
\newcommand\tildeV{V}
\newcommand\newV{\tilde V}
\newcommand\diffV{V^*}
\newcommand\oneminusprod{G}
\newtheoremstyle{mesthm}
  {10pt plus 1pt minus 1pt}
  {9pt minus 6pt}
  {\slshape}
  {0.5cm}
  {\bfseries}
  {.}
  {1ex}
  {}
\theoremstyle{mesthm}
\newtheorem{theorem}{Theorem}[section]
\newtheorem{Definition}[theorem]{Definition}
\newtheorem{lemma}[theorem]{Lemma}
\newtheorem{corollary}[theorem]{Corollary}
\newtheorem{proposition}[theorem]{Proposition}
\newtheorem{conjecture}[theorem]{Conjecture}
\renewcommand{\emph}{\textsl}
\title[Volume growth of Funk geometry and the flags of polytopes]
{Volume growth of Funk geometry\\ and the flags of polytopes}
\author{Dmitry Faifman \and Constantin Vernicos \and Cormac Walsh}
\date{\today}
\begin{document}

\begin{abstract}
We consider the Holmes--Thompson volume of balls in the Funk geometry
on the interior of a convex domain. We conjecture that for a fixed radius,
this volume is minimized when the domain is a simplex
and the ball is centered at the barycenter, or in the centrally-symmetric case,
when the domain is a Hanner polytope.
This interpolates between Mahler's conjecture and Kalai's flag conjecture.
We verify this conjecture for unconditional domains.

For polytopal Funk geometries, we study the asymptotics of the volume of balls
of large radius, and compute the two highest-order terms.
The highest depends only on the combinatorics, namely on the number
of flags. The second highest depends also on the geometry, and thus
serves as a geometric analogue of the centro-affine area for polytopes.

We then show that for any polytope, the second highest coefficient
is minimized by a unique choice of center point,
extending the notion of Santal\'o point.
Finally, we show that, in dimension two, this coefficient,
with respect to the minimal center point,
is uniquely maximized by affine images of the regular polygon.
\end{abstract}

\maketitle

\section{Introduction}

We are interested in the volume of metric balls in the Funk geometry---a
Finsler geometry with a non-reversible metric
defined on the interior of a convex body,
closely related to the Hilbert metric.
The forward  metric balls in this geometry take a particularly simple form:
they are merely scaled versions of the body itself.
We use the Holmes--Thompson definition of volume.

Our interest stems in part from a connection with a longstanding conjecture
in convex geometry.
As the radius $R$ of the ball tends to zero,
the volume is asymptotic to $R^n$ times
the \emph{Mahler volume} of the body,
with the polar body taken relative to the center of the ball.
The Mahler conjecture states that
the Mahler volume is minimized when the body is a simplex, or, if the body
is required to be centrally symmetric, when it is a Hanner polytope. 
We recall that a Hanner (or Hansen--Lima) polytope is any polytope that can 
be constructed from line segments by taking Cartesian products and 
polar bodies.
One may ask if the same bodies minimize the volume for all $R>0$.

\begin{conjecture}
\label{conj:main_conjecture}
Let $K$ be a centrally-symmetric convex body in $\R^n$, and let $R>0$. 
Then,
the volume of the (forward) ball in the Funk geometry
of radius $R$ centered at the origin
is minimized when the body is a Hanner polytope.
Explicitly,
\begin{equation*}
\volht_K\big(B_K(R)\big)
   \geq \volht_H\big(B_H(R)\big)
   = \frac {2^n} {n!} \frac{\left(\log(2e^R-1)\right)^n}{\omega_n},
\end{equation*}
where $H$ is any Hanner polytope of dimension $n$, and $\omega_n$ the volume of the $n$-dimensional Euclidean ball of unit radius.

If $K$ is not assumed to be centrally-symmetric, then 
the centered $n$-dimensional simplex minimizes $\volht_K\big(B_K(R)\big)$.
\end{conjecture}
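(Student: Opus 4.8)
The plan is to reduce the Holmes--Thompson volume of a Funk ball to an explicit integral, exploit the two operations---Cartesian product and polarity---that generate Hanner polytopes, and then run a Saint-Raymond--Meyer style induction on dimension in the unconditional case. Fix a center $x_0\in\interior K$. Since the forward Funk ball is $B_K(R)=x_0+(1-e^{-R})(K-x_0)$ and the tangent unit ball of the Funk norm at $y$ is the translate $K-y$, whose dual is $(K-y)^\circ$, the Holmes--Thompson definition gives
\[
\volht_K\big(B_K(R)\big)=\frac1{\omega_n}\int_{x_0+(1-e^{-R})(K-x_0)}\leb\big((K-y)^\circ\big)\,\dee y=:\Phi_K(R).
\]
Two free observations: $\Phi_K(R)$ is invariant under $\GL_n(\R)$, since Lebesgue measure on the base and on the polar fibre transform inversely, so $K$ may be normalized freely; and when $K$ is centrally symmetric one takes $x_0=0$, where the integrand is even in $y$ because $\leb\big((K-y)^\circ\big)=\leb\big((K+y)^\circ\big)$.

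The equality case is governed by two structural identities. First, if $K=A\times B$ with $A\subset\R^k$, $B\subset\R^m$ centered at the origins, then $\big((A-a)\times(B-b)\big)^\circ$ is the free sum $(A-a)^\circ\oplus_1(B-b)^\circ$ (the convex hull of the two polars placed in complementary subspaces), so the classical identity $\leb_n(C\oplus_1 D)=\tfrac{k!\,m!}{n!}\leb_k(C)\leb_m(D)$ together with Fubini gives
\[
\Phi_{A\times B}(R)=\frac{k!\,m!\,\omega_k\,\omega_m}{n!\,\omega_n}\,\Phi_A(R)\,\Phi_B(R).
\]
Second, I would establish the polarity invariance $\Phi_K(R)=\Phi_{K^\circ}(R)$ for symmetric $K$: writing $\Phi_K(R)=\omega_n^{-1}\leb_{2n}\{(y,\xi):y\in(1-e^{-R})K,\ h_K(\xi)-\dotprod{\xi}{y}\le 1\}$ and passing to the cone $\{(tz,t):z\in K,\ t>0\}\subset\R^{n+1}$, whose dual cone is the cone over $K^\circ$, one should exhibit this $2n$-dimensional region as a Lebesgue-preserving projective image of the corresponding region for $K^\circ$. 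A direct one-dimensional computation gives $\leb_1\big(([-1,1]-y)^\circ\big)=2/(1-y^2)$, hence $\Phi_{[-1,1]}(R)=\int_{-(1-e^{-R})}^{1-e^{-R}}\frac{\dee y}{1-y^2}=\log(2e^R-1)$. Combining these, and using $(A\oplus_1 B)^\circ=A^\circ\times B^\circ$, every $n$-dimensional Hanner polytope $H$ satisfies $\Phi_H(R)=\frac{2^n}{n!}\,\frac{(\log(2e^R-1))^n}{\omega_n}$, the value in the conjecture.

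For the unconditional case I would prove a sandwich lemma and induct on $n$. The lemma: if $A\subset\R^k$ is a symmetric convex body and $B=[-1,1]$ spans the remaining coordinate, then $A\oplus_1 B\subseteq K\subseteq A\times B$ implies $\Phi_K(R)\ge\frac{k!\,\omega_k\,\omega_1}{(k+1)!\,\omega_{k+1}}\,\Phi_A(R)\,\Phi_{[-1,1]}(R)$, with equality only for $K=A\oplus_1 B$ or $K=A\times B$ (both endpoints do attain equality, the free sum by polarity invariance). Now let $K\subset\R^n$ be unconditional; normalizing so that $\pm e_n\in\partial K$, set $S=K\cap\{x_n=0\}$, which by unconditionality equals the projection of $K$ onto $\R^{n-1}$ and is itself unconditional. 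Then $S\oplus_1[-1,1]\subseteq K\subseteq S\times[-1,1]$, so the lemma together with the inductive hypothesis $\Phi_S(R)\ge\frac{2^{n-1}}{(n-1)!}\frac{(\log(2e^R-1))^{n-1}}{\omega_{n-1}}$ yields $\Phi_K(R)\ge\frac{2^n}{n!}\frac{(\log(2e^R-1))^n}{\omega_n}$; tracking equality forces $K$, at this step, to be a product or a free sum of an $(n-1)$-dimensional Hanner polytope with an interval, hence a Hanner polytope. The base case $n=1$ is the computation above, which holds with equality for every symmetric interval.

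The main obstacle is the sandwich lemma. Unlike the plain-volume Mahler problem, the integrand $\leb((K-y)^\circ)$ does not factor over the foliation $\{x_n=t\}$---polarity couples the last coordinate to the others---and $y$ ranges over all of $B_K(R)$ rather than a product, so one must control these off-diagonal contributions and show that the extremes are exactly the product and the free sum; this is the Funk analogue of the corresponding step in Saint-Raymond's and Meyer's treatment of unconditional Mahler, and it is precisely where coordinatewise symmetry is used, so the method stops at the unconditional class. Establishing the polarity invariance $\Phi_K=\Phi_{K^\circ}$ rigorously---I expect through the cone model---is a smaller but genuinely nontrivial prerequisite. The unrestricted conjecture is beyond this or any current method: letting $R\to 0^+$ recovers Mahler's conjecture for general (respectively symmetric) convex bodies, so the simplex case in particular contains the non-symmetric Mahler conjecture, and the leading term of the $R\to\infty$ asymptotics recovers Kalai's flag conjecture.
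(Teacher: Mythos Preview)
The statement is a conjecture; the paper proves it only for unconditional bodies. You correctly note that the general case contains Mahler's conjecture (as $R\to 0$) and Kalai's flag conjecture (as $R\to\infty$), so a full proof is out of reach. Your structural ingredients---the product formula, the polarity invariance $\Phi_K=\Phi_{K^\circ}$ (quoted in the paper as Proposition~\ref{prop:duality_volume}), and the resulting value $\Phi_H(R)=\frac{2^n}{n!\,\omega_n}\bigl(\log(2e^R-1)\bigr)^n$ for Hanner polytopes---all match the paper.

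The gap is your sandwich lemma. You state it, flag it as ``the main obstacle,'' and offer no argument beyond calling it ``the Funk analogue'' of Meyer's step. But this lemma \emph{is} the entire content of the unconditional inequality; without it you have only the Hanner computation. Even for the Mahler volume, Meyer's inductive step is not a soft sandwiching but rests on an explicit integral identity special to unconditional bodies; you give no indication of how to carry such an identity over to the Funk integrand $\int_{\lambda K}\lvert (K-y)^\circ\rvert\,\dee y$, where, as you yourself note, the polar couples all coordinates. The paper's proof of the unconditional case is entirely different and avoids dimension induction: it expands
\[
\omega_n\,\Phi_K(R)=\frac{1}{n!}\sum_{j\ge 0}\frac{\lambda^{n+2j}}{(2j)!}\,I_{2j}\bigl(e^{-h_K}\bigr),
\qquad
I_{2j}\bigl(e^{-\phi}\bigr)=\int\langle x,\xi\rangle^{2j}\,e^{-\phi(x)-\mathcal L\phi(\xi)}\,\dee x\,\dee\xi,
\]
as a power series in $\lambda=1-e^{-R}$; unconditionality kills the odd monomials, and every surviving monomial term is bounded below, termwise, by the Fradelizi--Meyer functional inequality, with simultaneous equality at the cube. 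Equality in the $j=0$ term is exactly the unconditional Mahler equality case, which by Meyer and Reisner forces $K$ to be a Hanner polytope.
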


Our first theorem shows that as the radius becomes large,
the asymptotic behaviour of the volume of the ball
is determined by the number of \emph{flags} of the polytope.
Let $P$ be a polytope of dimension $n$ endowed with its Funk geometry,
and denote again by $\volht_P(B_P(R))$ the Holmes--Thompson volume of the ball
of radius $R$ about the origin,
which we assume to be contained in the interior of $P$.
Also, denote by $\flags(P)$ the set of flags of $P$.

\begin{theorem}
\label{thm:first_term}
For any polytope $P$ of dimension $n$,
\begin{equation*}
\lim_{R\to\infty} \frac {\omega_n} {R^n} \volht_P\big(B_P(R)\big)
    = \frac {|\flags(P)|} {(n!)^2}.
\end{equation*}
\end{theorem}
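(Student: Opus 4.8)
The plan is to write the Holmes–Thompson volume of the Funk ball as an integral over the unit cotangent bundle and analyze its leading asymptotics. Recall that for a Finsler metric $F$ on $\polytope^\circ$, the Holmes–Thompson volume of a region $U$ is
\begin{equation*}
\volht_\polytope(U) = \frac{1}{\omega_n} \int_U \leb\bigl(B^*_x\bigr)\, \dee x,
\end{equation*}
where $B^*_x \subset T^*_x\polytope$ is the dual unit ball of the norm $F(x,\cdot)$ and $\leb$ is Lebesgue measure (after fixing the standard identification $T_x\polytope \cong \R^n$). For the Funk metric on $\polytope$, the forward ball $B_\polytope(R)$ of radius $R$ about the origin is $(1-e^{-R})\polytope$, and the Funk norm at a point $x$ is the Minkowski gauge of $\polytope$ recentered at $x$, i.e. $F(x,v) = \lim_{t\to 0^+} t^{-1}\bigl(\text{gauge of }(\polytope - x)\text{ at }tv\bigr)$; concretely $F(x,v) = \max_i \frac{\dotprod{u_i}{v}}{1 - \dotprod{u_i}{x}}$ when $\polytope = \{y : \dotprod{u_i}{y}\le 1\}$. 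Hence $B^*_x$ is the convex hull of the points $u_i/(1-\dotprod{u_i}{x})$, and
\begin{equation*}
\volht_\polytope\bigl(B_\polytope(R)\bigr) = \frac{1}{\omega_n}\int_{(1-e^{-R})\polytope} \leb\Bigl(\conv_i \tfrac{u_i}{1-\dotprod{u_i}{x}}\Bigr)\, \dee x.
\end{equation*}

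**Localizing near the vertices.**

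As $R\to\infty$ the domain $(1-e^{-R})\polytope$ exhausts $\polytope^\circ$, and the integrand blows up precisely as $x$ approaches the boundary, so the whole contribution of order $R^n$ comes from thin neighborhoods of the facets—and, after iterating, from neighborhoods of the vertices, where $n$ of the facet functionals $1-\dotprod{u_i}{x}$ are simultaneously small. I would first show the facets' relative interiors (and more generally faces of dimension between $1$ and $n-1$) contribute only lower order, then zoom in at a vertex $v$ where facets $i_1,\dots,i_n$ meet. Introduce local coordinates $s_j = 1 - \dotprod{u_{i_j}}{x}$, $j=1,\dots,n$, which is an affine change of variables with constant Jacobian; in these coordinates the relevant region is roughly $\{s_j \in (0,\delta)\}$ and the integrand is $\leb$ of a polytope whose $n$ "dangerous" vertices are $u_{i_j}/s_j$ while all other vertices $u_i/(1-\dotprod{u_i}{x})$ stay bounded. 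The key computation is that for $s_j \to 0$,
\begin{equation*}
\leb\Bigl(\conv\bigl(\{u_{i_j}/s_j\}_{j=1}^n \cup \{\text{bounded points}\}\bigr)\Bigr) = \frac{1}{n!}\,\bigl|\det(u_{i_1},\dots,u_{i_n})\bigr|\prod_{j=1}^n \frac{1}{s_j} + O\Bigl(\prod_{j} s_j^{-1}\cdot \max_j s_j\Bigr),
\end{equation*}
since the dominant simplex is the one spanned by the $n$ large vertices. Integrating $\prod_j s_j^{-1}$ over $s_j\in(e^{-R}c_j, \delta)$ (the precise lower limit coming from $x\in(1-e^{-R})\polytope$) gives $\prod_j \log(1/s_j)$ evaluated at the endpoints, whose leading term is $R^n$ up to a vertex-dependent constant that will cancel the determinant.

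**Assembling the flag count.**

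The determinant $\bigl|\det(u_{i_1},\dots,u_{i_n})\bigr|$ times the Jacobian of the $s$-coordinates is exactly the reciprocal of the local volume factor, so each vertex of $\polytope$ with its $n$ incident facets contributes $\tfrac{1}{n!}\cdot\tfrac{R^n}{n!}$ to $\omega_n\volht_\polytope(B_\polytope(R))$ in the limit—wait, more carefully: a simple vertex is incident to exactly $n$ facets and the local model is a simplicial cone, giving one such term; a non-simple vertex must first be handled by the same facet-by-facet localization, and the correct bookkeeping is that the total coefficient is $\sum_{\text{vertices }v} (\text{number of complete flags through }v)/(n!)^2$, which sums to $|\flags(\polytope)|/(n!)^2$. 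The cleanest way to see this uniformly—and the step I expect to be the main obstacle—is to avoid case analysis on simplicity and instead prove the localization as an induction on dimension: the order-$R^n$ part of the facet integral over a facet $G$ equals $R$ times (the analogous order-$R^{n-1}$ quantity for the Funk geometry of $G$ as an $(n-1)$-polytope), by carefully controlling the dual ball as $x\to G$ and showing its volume is asymptotic to $1/\hbox{(normal coordinate)}$ times the $(n-1)$-dimensional dual ball of the induced Funk metric on $G$, plus an error that integrates to lower order. The induction base is $n=1$, where $\polytope = [a,b]$, the Funk ball of radius $R$ is an interval of Funk length $2R$, its Holmes–Thompson volume (with $\omega_1 = 2$) is $R$, and $|\flags([a,b])| = 2$, matching $R/(1!)^2 \cdot (\omega_1/\omega_1)$. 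Granting the inductive step, $\omega_n \volht_\polytope(B_\polytope(R))/R^n \to \tfrac{1}{n}\sum_{G \text{ facet}} |\flags(G)|/((n-1)!)^2 = |\flags(\polytope)|/(n!)^2$, since each flag of $\polytope$ restricts to a flag of exactly one facet and $n\cdot(n-1)!^2 = n!\cdot(n-1)! \ne (n!)^2$—so the normalization bookkeeping (tracking where the extra factor of $n$ versus $n!$ lives, coming from the Jacobian of passing to facet coordinates and the $\omega_n/\omega_{n-1}$ ratio) is the delicate point that must be done with care.
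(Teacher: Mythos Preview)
Your setup is correct and the instinct to localize near the boundary is right, but neither of the two routes you sketch is completed, and each has a real obstacle.

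\textbf{Vertex localization.} The asymptotic
\[
\leb\Bigl(\conv\bigl(\{u_{i_j}/s_j\}_{j=1}^n\cup\{\text{bounded points}\}\bigr)\Bigr)\sim\frac{|\det(u_{i_1},\dots,u_{i_n})|}{n!\,s_1\cdots s_n}
\]
presumes that exactly $n$ vertices of $P^\circ$ escape to infinity. At a non-simple vertex $v$ there are $k>n$ incident facets, the $k$ quantities $s_j=1-\langle u_{i_j},x\rangle$ are linearly dependent, and the leading behaviour of $|P^x|$ is only piecewise of the form $c/\prod s_j$, with the pieces and constants governed by the combinatorics of the dual facet $v^\circ$. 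You assert that the total comes out to the number of flags through $v$ divided by $(n!)^2$, but that is exactly the statement to be proved.

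\textbf{Facet induction.} You yourself flag the mismatch: since $\sum_{G}|\flags(G)|=|\flags(P)|$, matching $|\flags(P)|/(n!)^2$ requires a factor $1/n^2$, not the $1/n$ you write. The inductive step you describe---that $|P^x|$ near a facet $G$ is asymptotic to $s^{-1}$ times the Holmes--Thompson density of the Funk metric on $G$---is not established; moreover the effective radius in the $G$-direction is not $R$ but varies with the normal coordinate $s$, so the tangential integral is not simply $\omega_{n-1}\volht_G(B_G(R))$. Sorting out these two effects, and controlling the corner regions where several facets meet, is genuine work that you have not done.

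The paper's device that dissolves both difficulties at once is to triangulate $P^\circ$, not only $P$, into flag simplices. Choosing a point $q(F)$ in the relative interior of each face $F$ of $P^\circ$ gives the exact identity
\[
|P^x|=\frac{1}{n!}\sum_{g\in\flags(P^\circ)}\frac{|q(g_0)\wedge\cdots\wedge q(g_{n-1})|}{\prod_{i=0}^{n-1}\bigl(1-\langle q(g_i),x\rangle\bigr)},
\]
valid for every interior point $x$, so that $\omega_n\volht_P(B_P(R))$ becomes a double sum over pairs $(f,g)\in\flags(P)\times\flags(P^\circ)$ of integrals of $\prod_i (x_i^g)^{-1}$ over truncated flag simplices $\Delta^f_\tau$, in coordinates $x_i^g:=1-\langle q(g_i),\cdot\rangle$. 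Each such integral is now over a fixed simplex whose vertex matrix is lower triangular in the adapted coordinates, and an elementary induction on dimension (on this integral, not on the polytope) shows that the diagonal pair---$g$ equal to the flag dual to $f$---contributes $R^n/n!$, while every other pair is $O(R^{n-1})$. The two factors of $1/n!$ are then manifest, one from the flag triangulation of $P^\circ$ and one from the simplex integral, and no simplicity hypothesis is needed anywhere.
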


That the asymptotic volume on the left-hand-side is proportional to
the number of flags can be established by adapting the proof
of~\cite{vernicos_walsh_flag_approximability_of_convex_bodies}
from Hilbert geometry,
where the same statement holds with a different constant. 
The proof there makes use of the invariance of the volume under
collineations (invertible projective transformations), which holds for any reasonable definition
of volume in the Hilbert case. For the Funk metric, however,
invariance of the volume
under collineations is a special property of
the Holmes--Thompson definition, as observed in~\cite{faifman_funk}.
The explicit constant $(n!)^{-2}$ can then be determined, for example,
by computing the case of Hanner polytopes,
which is done in section \ref{unconditional}. 

We will however take a different approach, and deduce this theorem
from a comprehensive computation of the first two terms
of the asymptotic expansion of $\volht_P(B_P(R))$,
as explained in Theorem \ref{thm:second_term}.

In light of Theorem~\ref{thm:first_term},
our conjecture would imply that, out of all centrally-symmetric polytopes
of a given dimension, Hanner polytopes have the fewest flags.
This statement is known as the \textsl{flag conjecture} and has been ascribed
in~\cite[Chapter 22]{shaping_space_book} to Kalai.
It appears as a special case of Conjecture C
of~\cite{kalai_the_number_of_face_of_centrally_symmetric_polytopes},
although this conjecture was shown to be false in general
in~\cite{sanyal_werner_ziegler_on_kalais_conjectures}.

We show that Conjecture \ref{conj:main_conjecture} is true
for \emph{unconditional bodies}, and describe all equality cases under
that assumption.
Recall that an unconditional body is one that is symmetric through
reflections in the coordinate hyperplanes.

\begin{theorem}
\label{thm:unconditional_funk_mahler}
Let $K\subset \R^n$ be an unconditional convex body, and let $H$ be any
Hanner polytope of the same dimension.
Then, 
\begin{itemize}
\item
$\volht_K(B_K(R)) \geq \volht_H(B_H(R))$, for all $R>0$;
\item equality holds if and only if $K$ is a Hanner polytope.
\end{itemize}
\end{theorem}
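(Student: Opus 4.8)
\emph{Outline of the argument.}
The starting point is the Holmes--Thompson formula: since the forward Funk ball of radius $R$ about the origin is the dilate $(1-e^{-R})K$ as a subset of $\R^n$, and since the forward Funk unit ball at a point $p$ is the body $K-p$, one has
\begin{equation*}
\volht_K\bigl(B_K(R)\bigr)=\frac1{\omega_n}\int_{(1-e^{-R})K}\leb\bigl((K-x)^\circ\bigr)\,dx ,
\end{equation*}
the polar being taken about $x$. The plan is to exploit unconditionality to turn this into a tensorizable integral. Writing $h_K$ for the support function of $K$ (which depends only on $|y_1|,\dots,|y_n|$) and using $\leb(C^\circ)=\tfrac1{n!}\int_{\R^n}e^{-h_C(y)}\,dy$ with $C=K-x$, so that $h_C(y)=h_K(y)-\langle x,y\rangle$, then symmetrising in the $x$ and $y$ variables (legitimate because $(1-e^{-R})K$ is unconditional), I would reduce Theorem~\ref{thm:unconditional_funk_mahler} to the assertion that, with $r=1-e^{-R}$,
\begin{equation*}
\int_{\R^n_+}\int_{rK\cap\R^n_+}e^{-h_K(y)}\prod_{i=1}^n\cosh(x_iy_i)\,dx\,dy\ \ge\ \Bigl(\tfrac12\log(2e^R-1)\Bigr)^n ,
\end{equation*}
with equality if and only if $K$ is a Hanner polytope.

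Second, I would record the equality model. For a box $K=\prod_i[-a_i,a_i]$ both $h_K$ and the domain $rK$ split along the coordinates, so the double integral factors into $n$ copies of a one-dimensional integral, which by affine invariance of the Holmes--Thompson volume we may evaluate on the unit segment: $\int_0^\infty e^{-y}\,\tfrac{\sinh(ry)}{y}\,dy=\tfrac12\log\tfrac{1+r}{1-r}=\tfrac12\log(2e^R-1)$. This both fixes the normalisation and settles the base case $n=1$ (where every body is a segment, hence Hanner). The value $\volht_H(B_H(R))=\tfrac{2^n}{n!}\tfrac{(\log(2e^R-1))^n}{\omega_n}$ for a general Hanner polytope $H$ will then appear as the equality instance of the induction below, each step of which splits off either an $\ell_\infty$- or an $\ell_1$-summand.

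Third --- the heart of the proof --- I would establish the displayed inequality by induction on $n$, in the spirit of Saint-Raymond's proof of the Mahler conjecture for unconditional bodies. For the inductive step, fibre $K$ over a coordinate hyperplane: $K=\{(u,t):u\in L,\ |t|\le f(u)\}$ with $L\subset\R^{n-1}$ an unconditional convex body and $f\colon L\to[0,\infty)$ concave and even. One then expresses $h_K$ and the domain $rK$ through $f$ and $L$, performs the integration in the last coordinate $x_n$ and its dual $y_n$ to extract a one-dimensional factor, and compares what remains --- an integral over $L$ against a weight built from $f$ --- with the statement in dimension $n-1$. The main obstacle is precisely that, unlike the Mahler product $\leb(K)\leb(K^\circ)$, the Funk domain $rK$ couples all coordinates, so the peeling does not factor cleanly: one is forced to prove a sharp one-variable inequality --- essentially that, among admissible height profiles $f$, the extremal ones are the constant profile (an $\ell_\infty$-split) and the affine profile (an $\ell_1$-split) --- and to combine it with the inductive hypothesis while keeping control of the equality case. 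A one-dimensional Pr\'ekopa--Leindler or Borell--Brascamp--Lieb inequality, or a direct rearrangement in $t$, should provide the required tool, but pinning down the exact extremal profiles and propagating strictness is the delicate point.

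Finally, for the characterisation of equality I would run the induction backwards: equality at every stage forces the peeled coordinate to split $K$ as an $\ell_1$- or $\ell_\infty$-summand of a lower-dimensional unconditional body which is again an equality case. Descending to dimension one exhibits $K$ as obtained from segments by iterated Cartesian products and polar duals, i.e.\ $K$ is a Hanner polytope; the converse direction is the factorisation computed in the second step.
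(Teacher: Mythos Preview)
Your reduction is correct: averaging over sign flips of $x$ (legitimate since $rK$ is unconditional) turns $e^{\langle x,y\rangle}$ into $\prod_i\cosh(x_iy_i)$, and the one-dimensional computation for the segment is right.

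The gap is exactly where you flag it, and it is a real one rather than a detail. The Saint-Raymond/Meyer induction for the Mahler product succeeds because both $|K|$ and $|K^\circ|$ fiber as integrals of $(n-1)$-dimensional section volumes, and the one-variable step is a clean concavity inequality. Here, after integrating out $x_n$ over $[0,rf(u/r)]$ you obtain a factor $\sinh\bigl(rf(u/r)y_n\bigr)/y_n$, while $h_K(v,y_n)=\sup_{u'\in L}\bigl(\langle u',v\rangle+f(u')y_n\bigr)$ still entangles $y_n$ with all of $L$. There is no evident sharp one-variable inequality whose extremal profiles are simultaneously the constant and the affine height function for every $r\in(0,1)$, and neither Pr\'ekopa--Leindler nor BBL obviously produces one. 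So the step you describe as ``delicate'' is in fact the entire content of the proof, and it is missing.

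The paper bypasses induction altogether. Having reached (the equivalent of) your integrand, it expands $\cosh$ as a power series in $\lambda$, so that
\[
\omega_n\volht_K(\lambda K)=\frac{1}{n!}\sum_{j\ge0}\frac{\lambda^{n+2j}}{(2j)!}\,I_{2j},\qquad
I_{2j}=\int\langle x,\xi\rangle^{2j}e^{-\mathbbm 1^\infty_K(x)-h_K(\xi)}\,dx\,d\xi.
\]
Unconditionality kills the cross terms in the multinomial expansion of $\langle x,\xi\rangle^{2j}$, leaving a sum over multi-indices $I$ of products
\[
\int_{\R^n_+}x^{2I}e^{-\mathbbm 1^\infty_K(x)}\,dx\cdot\int_{\R^n_+}\xi^{2I}e^{-h_K(\xi)}\,d\xi,
\]
and each such product is bounded below termwise by the functional inequality of Fradelizi--Meyer, with equality when $K$ is the cube. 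Thus every coefficient in the power series is minimized by the cube, hence so is the series for each $\lambda$. For the equality statement, the $j=0$ term is a constant times $|K|\,|K^\circ|$, so equality forces equality in the unconditional Mahler inequality, and one invokes Meyer and Reisner to conclude that $K$ is Hanner. Your equality strategy via unwinding an induction is therefore not needed.
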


Letting $R$ tend to infinity and invoking Theorem~\ref{thm:first_term},
we then deduce the following.

\begin{corollary}
For an unconditional $n$-dimensional convex polytope $P$,
and a Hanner polytope $H$ of the same dimension, we have
$$|\flags(P)|\geq |\flags(H)|=2^n n! $$ 
\end{corollary}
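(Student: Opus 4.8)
The plan is to deduce the corollary formally from Theorem~\ref{thm:unconditional_funk_mahler} and Theorem~\ref{thm:first_term}, so that the only real content is a limiting argument together with the evaluation of the flag number of a Hanner polytope.

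First I would note that, being a full-dimensional unconditional convex body, $P$ contains the origin in its interior; hence Theorem~\ref{thm:first_term} applies to $P$ with center the origin, and Theorem~\ref{thm:unconditional_funk_mahler} applies with $K = P$. The latter gives, for every $R>0$,
\[
\volht_P\big(B_P(R)\big) \;\geq\; \volht_H\big(B_H(R)\big).
\]
Next I would multiply this inequality by $\omega_n/R^n$ and let $R\to\infty$. By Theorem~\ref{thm:first_term} the two sides converge respectively to $|\flags(P)|/(n!)^2$ and $|\flags(H)|/(n!)^2$; a non-strict inequality between convergent sequences passes to the limit, so cancelling the common factor $(n!)^{-2}$ yields $|\flags(P)|\geq|\flags(H)|$.

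Finally I would compute $|\flags(H)|$. One route: $|\flags(\cdot)|$ is multiplicative under Cartesian products and invariant under polarity, and Hanner polytopes are generated from line segments using exactly these two operations, so an induction on dimension shows the flag number is the same for every Hanner polytope; evaluating on the cube $[-1,1]^n$, a complete flag is determined by its vertex ($2^n$ choices) together with the order in which the coordinate directions are successively adjoined to produce the faces of increasing dimension ($n!$ choices), whence $|\flags(H)| = 2^n n!$. Alternatively one can read the constant straight off the asymptotics: the closed form $\volht_H(B_H(R)) = \tfrac{2^n}{n!}\,(\log(2e^R-1))^n/\omega_n$ together with $\log(2e^R-1)\sim R$ as $R\to\infty$ forces $|\flags(H)|/(n!)^2 = 2^n/n!$.

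As for the main obstacle: there really is none of substance at this stage, which is precisely the point---the corollary is a soft consequence of the two theorems. The only steps that need a sentence are the placement of the center at the origin (immediate from unconditionality) and the passage of the inequality to the limit (trivial), while the identity $|\flags(H)| = 2^n n!$ is classical. The genuine difficulty lies entirely upstream, in establishing Theorem~\ref{thm:unconditional_funk_mahler}.
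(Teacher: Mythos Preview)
Your proposal is correct and matches the paper's approach exactly: the paper derives the corollary by letting $R\to\infty$ in the inequality of Theorem~\ref{thm:unconditional_funk_mahler} and invoking Theorem~\ref{thm:first_term}, just as you do. Your additional remarks justifying $|\flags(H)|=2^n n!$ (either combinatorially or via the closed form from Lemma~\ref{lem:same_hanner}) are sound and fill in a detail the paper leaves implicit.
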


While we lose track of the equality cases when passing to the limit,
we do obtain necessary conditions for equality as a corollary
of Theorem \ref{thm:second_term}---see Corollary~\ref{cor:flag_equality} below.
We remark that recently, the related $3^d$ conjecture of Kalai concerning the minimal total face number of a symmetric polytope was proved in the unconditional case in \cite{chambers}.

\medskip

Since the highest-order term in the asymptotics of the volume depends only
on the combinatorics of the polytope and not on its geometry,
it is interesting to look also at the second-highest order term.

Recall that polytopes have the following property:
if $F$ and $F'$ are two faces of dimension $i-1$ and $i+1$,
respectively, and $F \subset F'$, then there are exactly two
faces $G$ of dimension $i$ such that $F \subset G \subset F'$.
So, for each $i\in\{0, \dots, n-1\}$ and flag $f\in\flags(P)$,
there is a unique flag, which we denote $r_i f$, that differs
from $f$ only by having a different face of dimension $i$.
The group $\langle \flip_0, \dots, \flip_{n-1} \rangle$ generated by these maps
is known as the \emph{monodromy group} or \emph{connection group}
of the polytope $P$.
To express the second term in the asymptotics of the volume,
we will need a particular element of this group,
the \textsl{complete flip}
$\flip := \flip_{n-1} \after \dots \after \flip_0$.

\begin{theorem}
\label{thm:second_term}
The volume of the ball of radius $R$ in the Funk geometry on a polytope $P$
satisfies
\begin{equation*}
\omega_n\volht_P \big(B_P(R) \big)
    = c_0(P) R^n + \secondterm(P) R^{n-1} + o(R^{n-1}) \textsl{,}
\end{equation*}
where
\begin{equation*}
c_0(P) = \frac{|\flags(P)|}{(n!)^2} \textsl{,}
\end{equation*}
and
\begin{equation}
\label{eqn:second_highest_term_formula}
\secondterm(P)
    = \frac {n} {(n!)^2}
      \sum_{f\in\flags(P)}
      \log\left(1 - \Bigl\langle (\flip f)_{n-1}, {f_0}\Bigr\rangle\right).
\end{equation}
\end{theorem}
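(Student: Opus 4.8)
\emph{Plan of proof.} Put the origin in the interior of $P$ and write $P=\{x:\dotprod{x}{u_\psi}\le1\text{ for every facet }\psi\}$, with $u_\psi$ the outer conormal normalised so that the facet inequality reads $\le1$; for $x$ in the interior set $s_\psi(x)=1-\dotprod{x}{u_\psi}>0$. First I would reduce to a polar integral: the forward Funk ball of radius $R$ about the origin is the dilate $(1-e^{-R})P$, and the dual unit ball of the Funk norm at $x$ is the polar body $(P-x)^\circ$, so the Holmes--Thompson definition gives
\begin{equation*}
\omega_n\,\volht_P\bigl(B_P(R)\bigr)=\int_{(1-e^{-R})P}\bigl|(P-x)^\circ\bigr|\,\dee x=:I(\rho),\qquad \rho:=1-e^{-R}.
\end{equation*}
The task is then to compute the first two terms of $I(\rho)$ as $\rho\uparrow1$, as a polynomial in $R=\log\frac1{1-\rho}$.

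Next I would split the integrand by flags. Since the vertices of $(P-x)^\circ$ are the points $u_\psi/s_\psi(x)$, its face lattice is that of $P$ reversed, and flags of $P$ correspond bijectively to flags of $(P-x)^\circ$; a barycentric subdivision of each facet of $(P-x)^\circ$ followed by coning from the origin dissects the polar into $|\flags(P)|$ simplices, one per flag $f$, and, $0$ being interior to the polar, gives $|(P-x)^\circ|=\sum_{f\in\flags(P)}\Phi_f(x)$ with $\Phi_f$ the volume of the simplex attached to $f$. A computation of this volume shows that $\Phi_f$ is a rational function singular precisely along the facet hyperplanes through $f_0$; when $P$ is \emph{simple} it is the single monomial
\begin{equation*}
\Phi_f(x)=\frac{|\det(u_\psi:\psi\ni f_0)|}{(n!)^2\prod_{\psi\ni f_0}s_\psi(x)},
\end{equation*}
and in general a positive combination of such monomials over linearly independent $n$-tuples of facets through $f_0$, all of the same order as $x\to f_0$. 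In the simple case $\Phi_f$ depends only on $f_0$, so the $n!$ flags at a vertex contribute equally; this already forces the normalisation $(n!)^{-2}$.

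Then comes the local asymptotic analysis at each flag. With $P$ simple and $\psi_0,\dots,\psi_{n-1}$ the facets through $f_0$, passing to the affine coordinates $s_i:=s_{\psi_i}$ absorbs the determinant into the Jacobian: $(1-e^{-R})P$ becomes a polytope $\widetilde P$ with $f_0$ at the origin and positive-orthant tangent cone there, and
\begin{equation*}
\int_{(1-e^{-R})P}\Phi_f\,\dee x=\frac1{(n!)^2}\int_{\widetilde P\cap\{s_i\ge1-\rho\}}\prod_{i=0}^{n-1}\frac{\dee s_i}{s_i}.
\end{equation*}
In the coordinates $\tau_i=\log(1/s_i)$ the integral on the right is the volume of the set of $\tau\in[0,R]^n$ satisfying the (convex, exponential) constraints coming from the facets of $\widetilde P$; the part of $[0,R]^n$ that violates these constraints has volume $O(R^{n-2})$ except over the $n$ coordinate two-planes, where it is controlled only by the lengths of the edges of $\widetilde P$ issuing from the origin. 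This yields
\begin{equation*}
\int_{(1-e^{-R})P}\Phi_f\,\dee x=\frac1{(n!)^2}R^n+\frac1{(n!)^2}\Bigl(\sum_{i=0}^{n-1}\log\ell_i(f)\Bigr)R^{n-1}+o(R^{n-1}),
\end{equation*}
where $\ell_i(f)$ is the $s_i$-length of the edge of $\widetilde P$ at the origin along the $s_i$-axis, namely the image of the edge $e$ of $P$ at $f_0$ lying on all $\psi_j$ with $j\ne i$, so $\ell_i(f)=1-\dotprod{v'}{u_{\psi_i}}$ with $v'$ the other endpoint of $e$. Summing over $f$ gives $c_0(P)=|\flags(P)|/(n!)^2$ and presents $c_1(P)$ as $(n!)^{-2}$ times a sum of terms $\log(1-\dotprod{v'}{u_{\psi_i}})$ over all flags $f$ and all indices $i$.

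It remains to identify this sum with~\eqref{eqn:second_highest_term_formula}. Tracing the definition of $\flip=\flip_{n-1}\after\dots\after\flip_0$ shows that $(\flip f)_0$ is the vertex adjacent to $f_0$ along $f_1$ and that $(\flip f)_{n-1}$ is the facet at $(\flip f)_0$ whose unique complementary edge (the edge at that vertex not lying on it) is $f_1$; hence $1-\bigdotprod{(\flip f)_{n-1}}{f_0}$ is one of the terms obtained above. Since $\flip$ is a bijection of $\flags(P)$, the fibre of $f\mapsto\bigl((\flip f)_0,(\flip f)_{n-1}\bigr)$ over a pair (vertex, facet containing it) has $(n-1)!$ elements, being in bijection with the maximal chains of a rank-$(n-1)$ Boolean interval of the face lattice; accounting for these multiplicities converts $(n!)^{-2}\sum_{f,i}$ into $n(n!)^{-2}\sum_f$, which is~\eqref{eqn:second_highest_term_formula}, and the leading term recovers Theorem~\ref{thm:first_term}. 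The principal obstacle is the third step at \emph{non-simple} vertices: there $\Phi_f$ is a genuine sum of monomials and, in the adapted coordinates, the tangent cone of $P$ at $f_0$ is a proper subcone of the positive orthant, so one must show that the dissection---despite depending on the barycentric choices---still reproduces, through order $R^{n-1}$, the intrinsic flag count and the correct edge-length data. Concretely, this reduces to a uniform asymptotic estimate for $\int_{(1-e^{-R})P}\bigl(\prod_i s_{\psi_i}\bigr)^{-1}\dee x$ for an arbitrary linearly independent $n$-tuple of facets $\psi_i$ meeting at a vertex, showing that beyond $O(R^{n-2})$ only the edge-lengths of $P$ at that vertex survive; establishing that estimate is the technical heart of the argument.
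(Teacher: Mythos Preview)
Your plan is a genuinely different route from the paper's. The paper does a \emph{double} flag decomposition: it splits both $P$ and $P^\circ$ into flag simplices, reducing the volume to a double sum over pairs $(f,g)\in\flags(P)\times\flags(P^\circ)$ of integrals $\int_{(1-\tau)\Delta^f}\prod_j (1-\langle q(g_j),\cdot\rangle)^{-1}$. The heart of the paper is then a trichotomy: only the pairing of $f$ with its own dual contributes to $R^n$; only that and the pairings with the $n$ adjacent dual flags contribute to $R^{n-1}$; all other pairings are $o(R^{n-1})$. This works uniformly for all polytopes, and produces an intermediate formula (depending on the chosen subdivision points) that is then simplified via two combinatorial lemmas to the intrinsic formula~\eqref{eqn:second_highest_term_formula}. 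Your approach instead decomposes only the polar integrand and integrates each $\Phi_f$ over the whole ball---in effect you pre-sum the paper's double sum over the primal index. For \emph{simple} $P$ this is cleaner: $\Phi_f$ is a single monomial depending only on $f_0$, and your edge-length formula is correct (though the domain is the image of $\rho P$, namely $(1-\rho)\mathbf 1+\rho\widetilde P$, not $\widetilde P\cap\{s_i\ge 1-\rho\}$; the discrepancy is harmless at order $R^{n-1}$ but you should say so). Your final combinatorial matching is also right in the simple case, though the fibre-counting argument is muddled: what actually does the job is the involution $(v,v')\mapsto(v',v)$ on ordered edges, which swaps your terms $1-\langle v',u_{\psi}\rangle$ (facet at $v$ missing $v'$) with the paper's $1-\langle(\flip f)_{n-1},f_0\rangle$ (facet at $v'$ missing $v$).

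The real gap is the non-simple case, which you correctly flag as the principal obstacle but do not resolve. When $f_0$ lies on $m>n$ facets, your $\Phi_f$ is no longer a monomial; expanding the barycentric-simplex volume gives a signed sum of terms $\det(u_{\psi_J})/\prod_{j\in J}s_{\psi_j}$ over $n$-subsets $J$, not a positive combination as you assert. For any single such $J$, the tangent cone of $P$ at $f_0$ is a \emph{proper} subcone of the positive orthant in the $s_J$-coordinates, and the extra facet constraints through $f_0$ (those with $\psi\ni f_0$, $\psi\notin J$) cut out a region near the corner $(R,\dots,R)$ in $\tau$-space whose volume is bounded but nonzero; there is no edge of $P$ along the $s_i$-axis and hence no intrinsic $\ell_i$. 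So the two-term asymptotic of a single monomial is \emph{not} governed by edge-lengths of $P$---it depends on the subset $J$ and on the subdivision choices---and you would need a cancellation argument across all monomials and all flags through $f_0$ to recover an intrinsic answer. That cancellation is precisely what the paper's double decomposition organises: by also cutting the domain into primal flag simplices it forces the coordinate $n$-tuple to match the flag structure, so that the ``neighbour'' contributions line up with the face lattice rather than with an arbitrary choice of $J$. Without an analogue of that mechanism, your plan does not yet yield a proof for general $P$.
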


Here, we are identifying the facet $(\flip f)_{n-1}$ with the associated
vertex of the dual polytope.

\begin{corollary}\label{cor:flag_equality}
For an unconditional $n$-dimensional convex polytope $P$,
and a Hanner polytope $H$ of the same dimension, assume
\begin{equation*}
|\flags(P)|= |\flags(H)|.
\end{equation*}
Then for any flag $f\in\flags(P)$, we have $-f_0\in (rf)_{n-1}$.
\end{corollary}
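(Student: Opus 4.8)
The plan is to compare the second-order coefficient $\secondterm$ for $P$ with the one for a Hanner polytope $H$, and then to run a term-by-term squeezing argument on the sum in~\eqref{eqn:second_highest_term_formula}.

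First, since $|\flags(P)|=|\flags(H)|$, Theorem~\ref{thm:second_term} gives $c_0(P)=c_0(H)$. By Theorem~\ref{thm:unconditional_funk_mahler} we have $\volht_P\big(B_P(R)\big)\ge\volht_H\big(B_H(R)\big)$ for every $R>0$; multiplying by $\omega_n$, substituting the expansions of Theorem~\ref{thm:second_term} on both sides, cancelling the equal $R^n$-terms, dividing by $R^{n-1}$, and letting $R\to\infty$ yields $\secondterm(P)\ge\secondterm(H)$. Then I would evaluate $\secondterm(H)$ from the explicit value of $\volht_H\big(B_H(R)\big)$, which is valid for Hanner polytopes (Theorem~\ref{thm:unconditional_funk_mahler}, or the computation in Section~\ref{unconditional}): expanding $\log(2e^R-1)=R+\log 2+o(1)$ gives $\omega_n\volht_H\big(B_H(R)\big)=\frac{2^n}{n!}R^n+\frac{2^n n\log 2}{n!}R^{n-1}+o(R^{n-1})$, so, using $|\flags(P)|=|\flags(H)|=2^n n!$,
\[
\secondterm(H)=\frac{2^n n\log 2}{n!}=\frac{n}{(n!)^2}\,|\flags(P)|\,\log 2 .
\]

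Comparing this with formula~\eqref{eqn:second_highest_term_formula} for $P$, the inequality $\secondterm(P)\ge\secondterm(H)$ becomes
\[
\sum_{f\in\flags(P)}\log\!\bigl(1-\langle(\flip f)_{n-1},f_0\rangle\bigr)\ \ge\ |\flags(P)|\,\log 2 .
\]
Next I would bound each summand from above. Identifying the facet $(\flip f)_{n-1}$ with the corresponding dual vertex $u$, normalized so that $\langle u,x\rangle\le 1$ for all $x\in P$ with equality precisely on that facet, and using that $P$, being unconditional, is centrally symmetric so that $-f_0\in P$, we get $\langle u,-f_0\rangle\le 1$, i.e.\ $\langle(\flip f)_{n-1},f_0\rangle\ge-1$. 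Hence $\log(1-\langle(\flip f)_{n-1},f_0\rangle)\le\log 2$ for every flag $f$. A sum of quantities each at most $\log 2$ that is at least $|\flags(P)|\log 2$ forces every summand to equal $\log 2$; thus $\langle(\flip f)_{n-1},f_0\rangle=-1$ for all $f$, which by the dual-vertex identification means $\langle u,-f_0\rangle=1$, i.e.\ $-f_0$ lies on the facet $(\flip f)_{n-1}$.

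The step I expect to be most delicate is bookkeeping rather than estimation: fixing the normalization of the facet/dual-vertex correspondence so that the trivial inequality $\langle\cdot\rangle\le 1$ and the central-symmetry inequality $\langle\cdot\rangle\ge-1$ both come out as used, and correctly recording $|\flags(H)|=2^n n!$ together with the $R^{n-1}$-coefficient of the Hanner volume. Once $\secondterm(P)\ge\secondterm(H)$ is in place, no further analysis is needed, and the conclusion follows from the elementary squeeze above.
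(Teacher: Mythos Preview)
Your proposal is correct and follows essentially the same route as the paper: compare asymptotics for $P$ and a Hanner polytope to get $c_1(P)\ge c_1(H)$, compute $c_1(H)$ to find the right-hand side equals $|\flags(P)|\log 2$, bound each summand by $\log 2$ via central symmetry, and squeeze. The only cosmetic difference is that the paper evaluates the right-hand side by applying formula~\eqref{eqn:second_highest_term_formula} directly to the cube (where $\langle(rf)_{n-1},f_0\rangle=-1$ for every flag), whereas you extract $c_1(H)$ from the closed-form expression for $\volht_H(B_H(R))$; both yield the same value.
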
 

We remark that all centrally-symmetric $2$-level polytopes satisfy
this condition.
 
We next consider the dependence of the volume of the metric ball
on the choice of its center.
It is shown in~\cite{centro_affine_area_paper} that, for any $R>0$,
the volume $\volht_P(B_P(R))$ is minimized by a unique choice of center,
denoted $s_R(P)$, generalizing the notion of the Santal\'o point
of a convex body. Here we study the behavior of this point as $R$ tends to
infinity.
	
\begin{theorem}
\label{thm:funk_santalo_polytope_convergence}
Let $P$ be a polytope, and let $c_1(P,x)$ be the coefficient of the
second-highest order term of the growth at infinity of the Funk volume,
for balls centered at $x$.
Then, $x \mapsto c_1(P,x)$ is proper and strictly convex,
and finite in the interior of $P$, and hence attains its minimum
at a unique point $s_\infty(P)$ in the interior.

This point can be characterized as follows: $s_\infty(P)=0$ if and only if
the center of mass of the vertices of $P^\circ$ lies at the origin,
where each vertex is assigned a weight equal to the number of flags
of $P^\circ$ that contain it.

Moreover, we have
\begin{equation*}
s_\infty(P) = \lim_{R\to\infty} s_R(P).
\end{equation*}
\end{theorem}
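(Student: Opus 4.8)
The plan is to first convert $c_1(P,\cdot)$ into an explicit elementary function of the center point, then read off convexity, properness, and the characterization of its minimizer, and finally transfer these conclusions to $s_R(P)$ by a convexity-of-limits argument.

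\emph{Step 1: an explicit formula for $c_1(P,x)$.} Centering a Funk ball at $x$ amounts to replacing $P$ by $P-x$, so that the dual polytope becomes $(P-x)^\circ$. If a facet of $P$ is $\{y:\dotprod{u}{y}=1\}$ relative to the origin, then relative to $x$ it is $\{y:\dotprod{u/(1-\dotprod{u}{x})}{y-x}=1\}$, and a one-line computation gives
\begin{equation*}
1-\Bigdotprod{\tfrac{(\flip f)_{n-1}}{1-\dotprod{(\flip f)_{n-1}}{x}}}{f_0-x}
   =\frac{1-\dotprod{(\flip f)_{n-1}}{f_0}}{1-\dotprod{(\flip f)_{n-1}}{x}}.
\end{equation*}
Substituting this into formula~\eqref{eqn:second_highest_term_formula} (applied to $P-x$) and using that the complete flip $\flip$ is a bijection of $\flags(P)$---so that, as $f$ ranges over $\flags(P)$, the facet $(\flip f)_{n-1}$ realizes each vertex $v$ of $P^\circ$ exactly $m_v$ times, where $m_v$ is the number of flags of $P^\circ$ through $v$ (a standard flag count: the number of maximal flags of $P$ with top facet $F$ equals the number of maximal flags of $P^\circ$ through the dual vertex)---we obtain
\begin{equation*}
c_1(P,x)=c_1(P,0)-\frac{\thedim}{(\thedim!)^2}\sum_{v\in\operatorname{vert}(P^\circ)}m_v\log\bigl(1-\dotprod{v}{x}\bigr).
\end{equation*}

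\emph{Step 2: convexity, properness, and the characterization.} The function $t\mapsto-\log(1-t)$ is strictly convex on $(-\infty,1)$, and $x\mapsto\dotprod{v}{x}$ is affine with $\dotprod{v}{x}<1$ for $x\in\interior P$; hence each summand in the formula above is convex, and so is their sum. Its Hessian at $x$ equals $\frac{\thedim}{(\thedim!)^2}\sum_v m_v(1-\dotprod{v}{x})^{-2}\,v\,v^{\mathsf T}$, which is positive definite since the facet normals of $P$ (equivalently, the vertices of $P^\circ$) span $\R^\thedim$; thus $c_1(P,\cdot)$ is strictly convex. As $x\to\partial P$ some $\dotprod{v}{x}\to1$, so $c_1(P,x)\to+\infty$; hence $c_1(P,\cdot)$ is proper on $\interior P$ and attains its infimum at a unique interior point $s_\infty(P)$. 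Finally, $\nabla_x c_1(P,0)=\frac{\thedim}{(\thedim!)^2}\sum_v m_v v$, which vanishes precisely when the barycenter of the vertices of $P^\circ$, each weighted by $m_v$, lies at the origin; by strict convexity this occurs if and only if $s_\infty(P)=0$.

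\emph{Step 3: convergence $s_R(P)\to s_\infty(P)$.} Let $V_R(x)$ denote $\omega_\thedim$ times the Holmes--Thompson volume of the Funk ball of radius $R$ centered at $x$, so that $s_R(P)$ minimizes $V_R$; applying Theorem~\ref{thm:second_term} to $P-x$ gives, for each fixed $x\in\interior P$,
\begin{equation*}
g_R(x):=\frac{V_R(x)-c_0(P)R^{\thedim}}{R^{\,\thedim-1}}\ \xrightarrow[\ R\to\infty\ ]{}\ c_1(P,x),
\end{equation*}
because $c_0(P)$ is independent of the center. Each $g_R$ is convex in $x$, by the convexity of the center map established in~\cite{centro_affine_area_paper} (which is what furnishes the uniqueness of $s_R$), so this pointwise convergence of convex functions to the finite convex function $c_1(P,\cdot)$ is automatically locally uniform on $\interior P$. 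Fix $\varepsilon>0$ with $\overline{\ball}(s_\infty(P),\varepsilon)\subset\interior P$, and put $\delta:=\bigl(\min_{|x-s_\infty(P)|=\varepsilon}c_1(P,x)\bigr)-c_1(P,s_\infty(P))>0$. For $R$ large, $g_R(s_\infty(P))<c_1(P,s_\infty(P))+\delta/3$ while $g_R(x)\ge c_1(P,s_\infty(P))+2\delta/3$ whenever $|x-s_\infty(P)|=\varepsilon$; since $g_R$ is convex, it then exceeds $g_R(s_\infty(P))$ everywhere outside $\overline{\ball}(s_\infty(P),\varepsilon)$, so its minimizer satisfies $|s_R(P)-s_\infty(P)|\le\varepsilon$. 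Letting $\varepsilon\to0$ gives $s_R(P)\to s_\infty(P)$.

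The translation rule of Step~1 and the differential calculus of Step~2 are routine. The two load-bearing inputs are the convexity of the center map $x\mapsto V_R(x)$, used to promote pointwise to locally uniform convergence, and the blow-up of $c_1(P,\cdot)$ at $\partial P$, used to prevent the minimizers $s_R(P)$ from escaping toward the boundary. I expect the former to be the main obstacle: if the needed convexity is not available off the shelf from~\cite{centro_affine_area_paper}, one would instead have to show directly that the remainder $o(R^{\thedim-1})$ in Theorem~\ref{thm:second_term} is uniform on compact subsets of $\interior P$, and then run the same comparison argument.
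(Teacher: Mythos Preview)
Your proposal is correct and follows essentially the same route as the paper. You derive the same explicit formula $c_1(P,x)=C-\frac{1}{n!(n-1)!}\sum_F |\flags(F)|\log(1-\dotprod{q(F)}{x})$ (the paper indexes by facets $F$ of $P$ rather than vertices of $P^\circ$, but your identification $m_v=|\flags(F)|$ is exactly the duality used), then argue properness and strict convexity, and finally prove $s_R(P)\to s_\infty(P)$ by the same device of subtracting the leading term and dividing by $R^{n-1}$ to get convex functions converging pointwise to $c_1(P,\cdot)$. The only cosmetic differences are that the paper proves strict convexity by observing that the level-set hyperplanes of the summands cannot all be parallel (rather than computing the Hessian), and it packages the convergence-of-minimizers step into a cited lemma from~\cite{centro_affine_area_paper} rather than spelling out the $\varepsilon$--$\delta$ comparison you give; your concern about the availability of the convexity of $x\mapsto V_R(x)$ is unfounded, as it is precisely the content of the Funk--Santal\'o existence theorem quoted from that reference.
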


The characterization of $s_\infty(P)$ in this theorem is analogous
to that of the classical Santal\'o point, namely that the Santal\'o point is at the origin
if and only if the center of mass of the polar body is at the origin.

In dimension two, the second term~(\ref{eqn:second_highest_term_formula})
of Theorem~\ref{thm:second_term} reduces to
\begin{equation}
\label{equ:second_term_dim_two}
\secondterm(P) := \frac12 \sum_{i,j: i \sim j}
                        \log\big(1 - \dotprod {e_i} {v_j}\big),
\end{equation}
where the $v_j$ are the vertices of the polygon,
the $e_i$ are the edges,
which correspond to the vertices of the dual polytope,
and $i\sim j$ means that $\dotprod {e_i} {v_j} \neq 1$
and $\dotprod {e_i} {v_{j+k}} = 1$, for some $k \in \{-1, 1\}$.
We consider how the functional $\secondterm$ varies as the polygon varies,
with the number of vertices being fixed.
It is of course $\gltwo$ invariant.
We show that every stationary point of $\secondterm$ is the image under
an element of $\gltwo$ of the regular polygon.

\begin{theorem}
\label{thm:regular_stationary}
Consider the space $\polygons^m$ of convex polygons with $m\geq 3$ vertices
containing the origin in their interior.
If the functional $\secondterm\colon \polygons^m \to \R$ is stationary at $P$,
then $P$ is
a linear image of the regular polygon $\regular^m$ with $m$ vertices,
centered at the origin.
Moreover, among all $P\in\polygons^m$ with $s_\infty(P)$ at the origin,
$c_1(P)$ is uniquely maximized by linear images of $\regular^m$.
\end{theorem}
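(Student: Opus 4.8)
The plan is to analyze the functional $\secondterm$ on $\polygons^m$ by passing to a convenient slice of the $\gltwo$-action and computing its critical points directly. First I would fix the combinatorial type: for a polygon with $m$ vertices in cyclic order $v_1,\dots,v_m$ and edges $e_1,\dots,e_m$ (with $e_i$ the dual vertex to the edge joining $v_i$ and $v_{i+1}$), the relation $i\sim j$ of~(\ref{equ:second_term_dim_two}) becomes simply $j\in\{i-1,i+2\}$ relative to $e_i$ paired with the ``far'' vertex of the two edges adjacent to each vertex; concretely $\secondterm(P)=\frac12\sum_i\big[\log(1-\dotprod{e_i}{v_{i-1}})+\log(1-\dotprod{e_i}{v_{i+2}})\big]$, using $\dotprod{e_i}{v_i}=\dotprod{e_i}{v_{i+1}}=1$. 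Then I would parametrize: since each $e_i$ is determined by $v_i,v_{i+1}$ via $\dotprod{e_i}{v_i}=\dotprod{e_i}{v_{i+1}}=1$, the whole functional is a function of the $2m$ coordinates of the vertices, and I would compute $\partial\secondterm/\partial v_k$ and set it to zero.

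The key computational step is to differentiate $\log(1-\dotprod{e_i}{v_j})$ in the vertex positions, remembering that $e_i$ itself depends on $v_i$ and $v_{i+1}$. Writing the constraints $\dotprod{e_i}{v_i}=\dotprod{e_i}{v_{i+1}}=1$ and differentiating implicitly gives $\partial e_i$ in terms of $\partial v_i,\partial v_{i+1}$; the resulting stationarity equations should, after collecting the contributions of $v_k$ from the four terms it appears in (as $v_k$ directly, and through $e_{k-1},e_k$), reduce to a linear relation among $v_{k-2},v_{k-1},v_k,v_{k+1},v_{k+2}$ with coefficients built from the quantities $1-\dotprod{e_i}{v_j}$. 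I expect the stationarity condition to say precisely that consecutive vertices obey a linear recurrence of the form $v_{k+1}-2\cos\theta\, v_k+v_{k-1}=0$ for a constant $\theta$, forcing the $v_k$ to lie on a single ellipse equally spaced in eccentric angle, i.e. $P$ is a linear image of $\regular^m$; here $\theta=2\pi/m$ is forced by the $m$-periodicity of the index. This reduction is the part I expect to be genuinely delicate, since one must verify that the coefficients in the recurrence are forced to be constant in $k$ rather than merely satisfying some weaker coupled system — this likely uses positivity (all factors $1-\dotprod{e_i}{v_j}$ lie in $(0,1)$ since $0\in\interior P$) together with an averaging or maximum-principle argument along the cyclic chain.

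For the second assertion — that among polygons with $s_\infty(P)=0$ the value $c_1(P)$ is \emph{uniquely} maximized by linear images of $\regular^m$ — I would combine Theorem~\ref{thm:funk_santalo_polytope_convergence}, which identifies the normalization $s_\infty(P)=0$ with the weighted centroid of the dual vertices being at the origin, with a concavity/compactness argument. The idea: restricting to the normalized slice removes the translational degeneracy, and on this slice $\secondterm$ is bounded above (as $P$ degenerates some factor $1-\dotprod{e_i}{v_j}\to 0$ and $\secondterm\to-\infty$, while $\secondterm$ cannot be large because each summand is negative), so a maximizer exists; it is a stationary point, hence a linear image of $\regular^m$ by the first part. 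Uniqueness up to $\gltwo$ then follows because all such images give the same value by $\gltwo$-invariance, and any two maximizers are both regular images. The potential obstacle here is ensuring the maximizer is interior to the slice (not a degenerate limit) and that the slice is $\gltwo$-saturated in the right sense; I would handle this by the explicit $-\infty$ behaviour at the boundary of the space of nondegenerate $m$-gons and by noting that the functional descends to the quotient $\polygons^m/\gltwo$, which is compact after the degenerate configurations are excluded by the level sets of $\secondterm$ itself.
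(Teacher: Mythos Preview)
Your approach to the first assertion is genuinely different from the paper's, and the step you yourself flag as ``genuinely delicate'' is a real gap. You compute the full gradient and hope that the stationarity system collapses to a constant-coefficient three-term recurrence $v_{k+1}-2\cos\theta\,v_k+v_{k-1}=0$. Even granting the algebra, forcing the coefficients to be \emph{constant in $k$} by ``averaging or a maximum principle'' is not justified, and I do not see how to make it work directly: the raw equations couple the quantities $1-\dotprod{e_i}{v_j}$ in a nonlinear way across several indices. (Incidentally, your parenthetical remark that these factors lie in $(0,1)$ is false; $\dotprod{e_i}{v_j}$ can be negative, as already for the centered triangle where $1-\dotprod{e_i}{v_{i+2}}=3$.) The paper circumvents the whole issue by a well-chosen one-parameter perturbation: moving $v_2$ along the direction $v_2-v_1$ and setting the derivative to zero yields, after a short computation in coordinates adapted to $(e_3,e_2)$, the existence of a single $\gamma\in\gltwo$ sending $(v_1,v_2,v_3)$ to $(e_4,e_3,e_2)$. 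Applying the same observation to $K^\circ$ and composing produces a $\gamma''\in\gltwo$ with $\gamma''(v_j)=v_{j+1}$ for all $j$; since $(\gamma'')^m=\mathrm{id}$, the cyclic group it generates is conjugate into $\mathrm O_2$, and $K$ is a linear image of $\regular^m$. This duality-and-bootstrap trick is what replaces your hoped-for recurrence.

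Your plan for the second assertion is close to the paper's in spirit (existence of a maximizer by compactness, then apply part one), but two points need repair. First, the upper bound ``each summand is negative'' is wrong for the reason above; the paper gets a uniform upper bound by restricting to John's position, where $P\subset 2B$ and $P^\circ\subset B$ force $|\dotprod{e}{v}|\le 2$, hence each summand is at most $\tfrac12\log 3$. Second, you must check that a maximizer of $c_1$ on the slice $Y_m=\{s_\infty(P)=0\}$ is actually stationary for $c_1$ on all of $\polygons^m$, not just on the slice. The paper does this by showing $s_\infty$ is smooth (implicit function theorem on the gradient of the strictly convex $c_1(P,\cdot)$), so $Y_m$ is a submanifold, and then decomposing $T_{P_0}\polygons^m=T_{P_0}Y_m\oplus T_{P_0}(\R^2\cdot P_0)$: on the first summand $P_0$ is a max, on the second it is a min by the very definition of $s_\infty$, so the full gradient vanishes.
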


This should be compared with~\cite{meyer_reisner_mahler_polygons}
(see also~\cite{alexander_fradelizi_zvavitch}), where the regular polygon
is shown to uniquely maximize the volume product among all polygons
with a fixed number of vertices. One could ask if the regular polygon
similarly maximizes $\volht_P(B_P(R))$ for general $0<R<\infty$.

\subsection{Plan of the paper}
After some preliminaries, we prove Theorem~\ref{thm:unconditional_funk_mahler}
in section~\ref{unconditional}.
In section~\ref{sec:volume_dual},
we decompose the polytope and its polar into flag simplices, and
show that the volume of a ball can be expressed as the sum
of a certain integral over all pairs
consisting of a flag simplex of the polytope and a flag simplex of the polar.
We then calculate the two highest-order terms
contributed by each of these pairings.
In section~\ref{sec:volume_contribution}, we combine this information to
prove Theorem~\ref{thm:second_term}.
In section~\ref{sec:santalo},
we prove Theorem~\ref{thm:funk_santalo_polytope_convergence} concerning
the convergence of the Funk--Santal\'o point,
and in section~\ref{sec:stationary} we study two-dimensional Funk geometries,
proving Theorem~\ref{thm:regular_stationary} and developing an exact
expression in dimension two for the growth rate of the volume at any
finite radius.
We establish a recursive formula for the growth of the volume
in the case of the simplex in section~\ref{sec:simplices}.
We conclude in section~\ref{sec:questions} with some further conjectures,
questions, and thoughts.

\section{Preliminaries}
\label{sec:preliminaries}

\subsection{Funk geometry}

Let $K$ be an open bounded convex body in $\R^n$.
Consider a point $p\in K$ 
and a vector $\vec{v}\in \R^n$. Since $K$ is convex,
there exists a unique real number $t>0$ such that
$p+t\vec{v}$ is on the boundary $\partial K$ of $K$.
We define the \textsl{Funk weak norm} of $\vec{v}$
at $p$ to be
\begin{equation*}
F_K(p,\vec{v}) := \frac{1}{t}\text{.}
\end{equation*}
This has all the properties of a norm, except that it is not necessarily
homogeneous, only positively homogeneous,
and so the associated unit ball is not necessarily symmetric.
Some call this the \emph{tautological Finsler structure} associated to $K$,
because at each point $p$ of $K$ the unit ball of the norm $F_K$
is the just the convex set $K$ itself centered at $p$.

Given any two points $p$ and $q$ in $K$, we can look at
all piecewise $C^1$ paths  $\gamma\colon [0,1]\to K$ from $p$ to $q$,
and compute their \emph{Funk length}
\begin{equation*}
l_F(\gamma) := \int_0^1 F_K\bigl(\gamma(t),\dot\gamma(t)\bigr) \, \dee t.
\end{equation*}
If we denote by $b$ the intersection of the half line $[p,q)$ with $\partial K$,
then the infimum of these lengths over all paths from $p$ to $q$
is the \textsl{Funk weak distance} and is equal to
\begin{equation*}
d_F(p,q) := \log\frac{pb}{qb}\text{.}
\end{equation*}
Here, $pb/qb$ is to be understood as the real number $\lambda$ such
that $\vec{pb}=\lambda\vec{qb}$.
The quantity $d_F$ is a weak distance in the sense that it is non-negative
for any two distinct points, it satisfies the triangle inequality,
and $d_F(x, x)=0$ for every point $x$.
However, it is not symmetric.
For simplicity, we will drop the adjective ``weak'' from now on.

The forward Funk metric ball centered
at $p\in K$ of radius $R$ is
\begin{equation*}
B_K(p,R) := \bigl\{z\in K \mid d_K(p,z)\leq R\bigr\}.
\end{equation*}
One easily finds that
\begin{equation*}
    B_K(p,R) = p + (1 - e^{-R})(K-p),
\end{equation*}
that is, the image of $K$ by the dilation of ratio $(1-e^{-R})$
with respect to $p$.
We shall denote by $\lambda$ the number $(1-e^{-R})$ in what follows
to simplify the exposition.

The Holmes--Thompson, sometimes also called symplectic, volume in Funk geometry can be defined as follows.
Given a Lebesgue measure $\mu$ on $\R^n$, there exists a unique dual
Lebesgue measure $\mu^*$ on the dual of $\R^n$
such that, for any basis $e=(e_1,\ldots,e_n)$ and its dual one ${e^*=(e_1^*,\ldots,e_n^*)}$ we get
\begin{equation*}
\mu^*(e_1^*\land\dots\land e_n^*)\cdot\mu(e_1\land\dots\land e_n)=1.
\end{equation*}
Denote by $K^p$ the polar body of $K$ with respect to $p\in K$.
Then, for any Borel set $U$ in $K$, its Holmes--Thompson measure is
 given by
\begin{equation*}
\volht_K(U) := \int_U \frac{\mu^*(K^p)}{\omega_n} \, \dee \mu(p)
\end{equation*}
where $\omega_n$ is the volume of the $n$-dimensional Euclidean unit ball.

Typically in computations, once coordinates $p=(p_1,\ldots,p_n)$ are fixed, one takes
$\dee \mu(p)=\dee \leb_n(p)=dp_1\cdots dp_n$ where $dp_i:=p_i^*$, hence
\begin{equation*}
	\volht_K(U) = \omega_n^{-1} \int_U |K^p| \, \dee p \quad\text{ with }\quad |K^p|=\int_{K^p} dp_1^*\cdots dp_n^*.
\end{equation*}

\subsection{Faces and Flags}

A \emph{face} of a convex polytope $P$ is the intersection of the polytope with
a closed halfspace such that no point of the interior of the polytope lies
on the boundary of the halfspace.
Note that the polytope itself is a face, as is the empty set.
The dimension of a face is the dimension of the affine subspace
it generates. As usual, $n-1$-dimensional faces are called 
\textsl{facets}, $0$-dimensional ones \textsl{vertices}
and $1$-dimensional ones \textsl{edges}.

The set of faces of a polytope is a partially ordered set with the inclusion
relation.
In fact, it forms a lattice, called the \emph{face lattice}.

A \textsl{flag} of $P$ is a sequence $f=(f_0,f_1,\ldots,f_n)$,
of faces such that, for all $i\in\{0,\ldots,n\}$,
the face $f_{i}$ is an $i$-dimensional face, and,
for $i\in\{0,\ldots,n-1\}$, we have $f_i \subset f_{i+1}$.
We denote the set of flags of $P$ by $\flags(P)$.

Polytopes have the following so-called \emph{diamond property}:
if $F$ and $F'$ are two faces of dimensions $i-1$ and $i+1$,
respectively, and $F \subset F'$, then there are exactly two
faces $G$ of dimension $i$ such that
$F \subset G \subset F'$.
For each $i\in\{0, \dots, n-1\}$,
define the map $\flip_i \colon \flags(P) \to \flags(P)$ in the following way.
For each flag $f$ of $P$, and $j \in \{-1, \dots, n\}$, let
\begin{align*}
(\flip_i f)_j :=
\begin{cases}
G', \quad\text{if $j = i$}, \\
f_j, \quad\text{otherwise},
\end{cases}
\end{align*}
where $G'$ is the unique face of dimension $i$ distinct from $f_i$
satisfying $f_{i-1} \subset G' \subset f_{i+1}$. 

The two flags $r_i f$ and $f$ are said to be \emph{$i$-adjacent}.
The group $\langle \flip_0, \dots, \flip_{n-1} \rangle$ generated by these maps
is known as the \emph{monodromy group} or \emph{connection group}
of the polytope $P$.

\subsection{Taking the polar with respect to different points}

We have the following relationship between the polar of a convex body
at the origin and its polar at another point.

\begin{lemma}
\label{lem:dual_with_respect_to_point}
Let $K$ be a convex body, and $x$ a point in $\interior K$.
Then,
\begin{equation*}
K^x := (K - x)^\circ
    = \Big\{ \frac{w} {1 - \dotprod {w} {x}}
          \mathrel{}\big|\mathrel{} w \in K^\circ \Big\}.
\end{equation*}
\end{lemma}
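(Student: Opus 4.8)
The plan is to unwind the definitions of polar body and of polarity with respect to a shifted origin, and then to recognize the resulting affine change of variables as a projective transformation that one inverts explicitly.

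First I would write down the definition: $K^x = (K-x)^\circ = \{\, u \in \R^n \mid \dotprod{u}{y-x} \le 1 \text{ for all } y \in K \,\}$, and similarly $K^\circ = \{\, w \mid \dotprod{w}{y} \le 1 \text{ for all } y \in K \,\}$. The goal is to show the map $w \mapsto w/(1-\dotprod{w}{x})$ sends $K^\circ$ bijectively onto $K^x$. Note first that for $w \in K^\circ$ we have $\dotprod{w}{x} < 1$, since $x \in \interior K$ forces $\dotprod{w}{x}$ to be strictly below the supremum $\sup_{y\in K}\dotprod{w}{y} \le 1$; hence the denominator $1-\dotprod{w}{x}$ is positive and the map is well defined on $K^\circ$.

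The key computational step is the equivalence of inequalities. Given $w \in K^\circ$, set $u = w/(1-\dotprod{w}{x})$. Then for any $y \in K$,
\begin{equation*}
\dotprod{u}{y-x} \le 1
\iff \dotprod{w}{y-x} \le 1-\dotprod{w}{x}
\iff \dotprod{w}{y} \le 1,
\end{equation*}
using that $1-\dotprod{w}{x}>0$ so the direction of the inequality is preserved. Taking the supremum over $y \in K$ shows $w \in K^\circ$ if and only if $u \in K^x$. This gives that the map sends $K^\circ$ into $K^x$. For surjectivity I would exhibit the inverse: solving $u = w/(1-\dotprod{w}{x})$ for $w$ gives $w = u/(1+\dotprod{u}{x})$, and one checks by the same one-line equivalence (now with the roles reversed, noting $1+\dotprod{u}{x}>0$ for $u \in K^x$ since $0 \in \interior(K-x)$) that $u \in K^x$ implies $w \in K^\circ$. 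Hence the map is a bijection and the set identity holds.

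The only real subtlety — hardly an obstacle — is keeping track of the signs of the denominators $1-\dotprod{w}{x}$ and $1+\dotprod{u}{x}$ so that the inequality manipulations are valid; this is exactly where the hypothesis $x \in \interior K$ (equivalently $0 \in \interior(K-x)$, equivalently $0 \in \interior K^\circ$ after the first reduction) is used. One should also remark that $K$ need not be centrally symmetric and need not contain the origin a priori, but replacing $K$ by $K-x$ and $K^\circ$ by the polar at $x$ is set up precisely so that both polar bodies contain the origin in their interiors, making all denominators positive on the relevant domains.
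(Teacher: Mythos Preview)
The paper states this lemma without proof, treating it as a standard fact, so there is no paper proof to compare against. Your argument is correct in its essentials: the one-line equivalence $\dotprod{u}{y-x}\le 1 \iff \dotprod{w}{y}\le 1$ for $u=w/(1-\dotprod{w}{x})$ is exactly the right computation, and exhibiting the inverse $w=u/(1+\dotprod{u}{x})$ cleanly establishes bijectivity.

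One small correction is in order. You justify $1+\dotprod{u}{x}>0$ for $u\in K^x$ by citing $0\in\interior(K-x)$, and later remark that $K$ ``need not contain the origin a priori.'' This is not quite right: what you actually need is $-x\in\interior(K-x)$, i.e.\ $0\in\interior K$, so that the same interior-point argument you used for the forward direction applies. If $0\notin K$ the statement can genuinely fail (e.g.\ $K=[1,3]\subset\R$, $x=2$: then $K^x=[-1,1]$ but the image of $K^\circ=(-\infty,1/3]$ under $w\mapsto w/(1-2w)$ is only $(-1/2,1]$). In the paper's context $0\in\interior K$ is always implicitly assumed whenever $K^\circ$ appears as a convex body, so this does not affect the validity of your proof for the intended setting---just tighten the hypothesis and the justification accordingly.
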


\section{Properties of the Holmes--Thompson volume in Funk geometry}
\label{sec:properties}

While the Funk geometry is only an affine invariant,
many of its aspects are more naturally considered within projective geometry,
as was observed in~\cite{faifman_funk}.
In particular, the associated Holmes--Thompson volume
exhibits two projective invariance properties.
Firstly, it is invariant under collineations.

\begin{proposition}
\label{prop:projective_invariant}
If $g\colon \R\mathbb P^n\to \R\mathbb P^n$ is a collineation,
and $K\subset\R^n$ is a convex body in an affine chart
such that $gK\subset\mathbb R^n$,
then $\volht_K(U)=\volht_{gK}(gU)$ for any Borel subset $U\subset K$.
\end{proposition}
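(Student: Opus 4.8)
**Proof proposal for Proposition \ref{prop:projective_invariant} (collineation invariance of the Holmes--Thompson volume in Funk geometry).**

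The plan is to reduce to the case of a projective transformation that fixes the affine chart, together with the action of the affine group, since every collineation of $\R\mathbb P^n$ is generated by affine maps and a single ``inversion''-type map in suitable coordinates. Affine invariance is essentially immediate: if $g$ is an affine bijection with linear part $A$, then $g$ carries the Funk norm $F_K(p,\vec v)$ to $F_{gK}(gp, A\vec v)$, and the polar body transforms by $(gK)^{gp} = (A^{-1})^*(K^p)$; the Jacobian factor $|\det A|$ from $\dee\mu(p)$ and the factor $|\det A|^{-1}$ coming from $\mu^*\big((gK)^{gp}\big) = |\det A|^{-1}\mu^*(K^p)$ cancel exactly, because $\mu^*$ is the dual Lebesgue measure. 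So the content is entirely in the projective (non-affine) direction.

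For the genuinely projective part, I would work in homogeneous coordinates and pick an affine chart so that the collineation $g$ acts on $\R^n$ by $g(x) = x/(1+\dotprod{\xi}{x})$ for a fixed covector $\xi$, defined on the slab where $1+\dotprod{\xi}{x}>0$. The first key step is a pointwise identity: the Funk norm is a projective invariant of the pair (convex body, point, tangent line), which follows from the fact that $d_F(p,q)$ is expressed purely through the cross-ratio-like quantity $\log(pb/qb)$ along the line $[p,q)$, and cross-ratios are preserved by collineations — so $F_{gK}(gp, dg_p(\vec v)) = F_K(p,\vec v)$ after accounting for how $dg_p$ scales vectors. The second key step is to compute how the polar body with respect to the point transforms: using Lemma \ref{lem:dual_with_respect_to_point} to move everything to polars at the origin, one checks that $(gK)^{gp}$ is the image of $K^p$ under the adjoint (inverse-transpose) of the differential $dg_p$, up to the scalar that makes the duality pairing work out. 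The third step is then purely a change-of-variables computation: $\dee\mu(gp) = |\det dg_p|\,\dee\mu(p)$, while $\mu^*\big((gK)^{gp}\big) = |\det dg_p|^{-1}\,\mu^*(K^p)$ by the linear transformation law for the dual measure, and these Jacobians cancel, leaving $\volht_{gK}(gU) = \volht_K(U)$.

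The main obstacle I anticipate is bookkeeping the scalar factors correctly in the polar-body transformation law of the middle step: the differential $dg_p$ of the projective map is itself only defined up to the ambiguity of homogeneous coordinates, and one must verify that the particular normalization forced by ``$gK\subset\R^n$ lies in the affine chart'' is exactly the one that makes the duality pairing between $(gK)^{gp}$ and $gK - gp$ unimodular, so that the dual measure transforms by the reciprocal Jacobian. Concretely, one writes $dg_p$ explicitly as $\frac{1}{1+\dotprod{\xi}{p}}\big(I - \frac{(\cdot)\xi^{\mathsf T}}{1+\dotprod{\xi}{p}}\big)$-type expression and tracks $\det dg_p$ through both the base integral and the fiber integral; the cancellation is the content of the proposition and is special to Holmes--Thompson volume (for a general Finsler volume the fiber factor would not be the reciprocal Jacobian, which is exactly the point noted after the statement). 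Once these scalars are pinned down, the proof is a one-line change of variables. It suffices to treat $U = K$ by inner regularity, or simply carry $U$ along since all maps involved are diffeomorphisms onto their images.
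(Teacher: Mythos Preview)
The paper does not prove this proposition; it is stated with an implicit reference to \cite{faifman_funk}, where the projective invariance of the Holmes--Thompson volume in Funk geometry was observed. So there is no ``paper's own proof'' to compare against, and your direct computational approach is a legitimate way to supply one.

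That said, your ``first key step'' contains a genuine error of reasoning that you should excise. The Funk distance $d_F(p,q)=\log(pb/qb)$ is \emph{not} a cross-ratio: it involves only three collinear points $p,q,b$, and simple ratios are preserved by affine maps but not by general collineations. Consequently the Funk metric itself is \emph{not} projectively invariant---only its symmetrization, the Hilbert metric, is, because that one is built from a genuine four-point cross-ratio. The non-trivial content of the proposition is precisely that the Holmes--Thompson volume is projectively invariant even though the underlying Finsler metric is not; you cannot derive this from metric invariance. Fortunately you do not actually use this step downstream, so the argument survives once you delete it.

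Your steps two and three are correct in substance, with one correction: for $g(x)=x/(1+\langle\xi,x\rangle)$ and $\lambda(p)=1+\langle\xi,p\rangle$, one finds $(gK)^\circ=K^\circ+\xi$, and then via Lemma~\ref{lem:dual_with_respect_to_point} the map carrying $K^p$ onto $(gK)^{gp}$ is
\[
u \;\longmapsto\; (dg_p^{-1})^{*}u \;+\; \lambda(p)\,\xi,
\]
which is \emph{affine}, not linear. The translation is harmless for the dual Lebesgue measure, and since $\det(dg_p^{-1})^{*}=\lambda(p)^{n+1}$ while $\det dg_p=\lambda(p)^{-(n+1)}$, the cancellation you describe goes through exactly. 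A clean way to package the whole computation is to rewrite
\[
\omega_n\,\volht_K(U)=\int_{U\times K^\circ}\frac{dp\,dw}{(1-\langle w,p\rangle)^{n+1}},
\]
and then check directly that the substitution $q=g(p)$, $v=w+\xi$ leaves this integral unchanged.
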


Secondly, the Holmes--Thompson volume in Funk geometry is invariant
under projective polarity~\cite[Proposition 7.2]{faifman_funk}.
This can be stated as follows. 

\begin{proposition}
\label{prop:duality_volume}
If $K$ and $L$ are convex bodies in $\mathbb R^n$,
with $K \subset \interior (L)$,
and $p\in \interior(K)$,
then $\volht_{L}(K)=\volht_{K^p}(L^p)$.
\end{proposition}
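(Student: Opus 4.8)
The plan is to turn both sides into one and the same symmetric double integral. Since the Holmes--Thompson measure is translation invariant and $(L-q)^\circ$ depends only on $L-q$, one has $\volht_L(K)=\volht_{L-p}(K-p)$, while $K^p=(K-p)^\circ$ and $L^p=(L-p)^\circ$; so, after translating, we may assume $p=0\in\interior(K)\subset\interior(L)$, and the claim becomes $\volht_L(K)=\volht_{K^\circ}(L^\circ)$. By the definition of the volume,
\begin{equation*}
\omega_n\,\volht_L(K)=\int_K |L^q|\,\dee q=\int_K\int_{(L-q)^\circ}\dee z\,\dee q ,
\end{equation*}
where the inner region is bounded because $q\in\interior(K)\subset\interior(L)$.

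Next I would change variables in the inner integral using Lemma~\ref{lem:dual_with_respect_to_point}: for $q\in\interior(L)$ the map $\phi_q(w):=w/(1-\dotprod{w}{q})$ is a bijection of $L^\circ$ onto $(L-q)^\circ$, being injective on the open half-space $\{\dotprod{w}{q}<1\}$ (which contains $L^\circ$), with inverse $z\mapsto z/(1+\dotprod{z}{q})$. Writing $s:=1-\dotprod{w}{q}$, one computes $D\phi_q(w)=s^{-1}\bigl(I+s^{-1}wq^{\top}\bigr)$, so by the matrix determinant lemma $\det D\phi_q(w)=s^{-n}\bigl(1+s^{-1}\dotprod{w}{q}\bigr)=s^{-(n+1)}>0$, using $s+\dotprod{w}{q}=1$. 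Substituting, we reach the key identity
\begin{equation*}
\omega_n\,\volht_L(K)=\int_K\int_{L^\circ}\frac{\dee w\,\dee q}{(1-\dotprod{w}{q})^{\,n+1}} .
\end{equation*}

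This expression is visibly symmetric under exchanging the roles of $(K,q)$ and $(L^\circ,w)$. Running the identical computation on the right-hand side — using $(K^\circ)^\circ=K$ (valid since $0\in\interior(K)$) and $L^\circ\subset\interior(K^\circ)$ (which follows from $K\subset\interior(L)$ and makes $(K^\circ-w)^\circ$ bounded for $w\in L^\circ$) — gives $\omega_n\,\volht_{K^\circ}(L^\circ)=\int_{L^\circ}\int_K(1-\dotprod{v}{w})^{-(n+1)}\,\dee v\,\dee w$, which coincides with the previous integral by Fubini; this proves the proposition. The only genuine computation is the Jacobian; the distinction between open and closed bodies, and between $\interior(K)$ and $K$, is measure-theoretically negligible. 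Conceptually, the identity expresses that projective polarity lifts to a volume-preserving map between the Funk unit co-ball bundles of a body and its polar, and a more structural argument would establish this together with Proposition~\ref{prop:projective_invariant}; but the elementary computation above is self-contained and is the route I would present.
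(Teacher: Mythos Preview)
Your proof is correct. The reduction to $p=0$, the change of variables using Lemma~\ref{lem:dual_with_respect_to_point}, and the Jacobian computation $\det D\phi_q(w)=(1-\langle w,q\rangle)^{-(n+1)}$ are all carried out cleanly, and the resulting symmetric double integral
\[
\omega_n\,\volht_L(K)=\int_K\int_{L^\circ}\frac{\dee w\,\dee q}{(1-\langle w,q\rangle)^{n+1}}
\]
makes the duality manifest.

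As for comparison: the paper does not prove this proposition at all. It is quoted from \cite[Proposition~7.2]{faifman_funk}, where the argument is phrased more structurally, essentially as you anticipate in your closing remark---projective polarity induces a symplectomorphism between the unit co-ball bundles, and Holmes--Thompson volume is the pushforward of the symplectic volume. Your direct computation is the coordinate unwinding of that statement; it is more elementary and fully self-contained, at the cost of obscuring why this particular notion of volume (and not, say, Busemann volume) enjoys the duality. Either route is fine here, and yours is well suited to a presentation that does not want to invoke symplectic language.
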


We now establish a multiplicative property of the volume in Funk geometry.

\begin{lemma}
\label{lem:multiplicative_volume}
If $K\subset \R^k$ and $L\subset \R^l$ are convex bodies, then
\begin{equation}  
\begin{split}
(k+l)! \, \omega_{k + l} \, \volht_{K\times L}
    & \big(B_{K\times L}\bigl((a, b), r\bigr)\big) \\
    &= \Big( k! \, \omega_k \, \volht_K \big(B_K(a, r)\big) \Big)
          \Big( l! \, \omega_l \, \volht_L \big(B_L(b, r) \big) \Big) \text{.}
\end{split}
\end{equation}
\end{lemma}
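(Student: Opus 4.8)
The plan is to unwind both sides into explicit Lebesgue integrals using the definition of the Holmes--Thompson volume, and then exploit the fact that the polar of a Cartesian product at a product point decomposes nicely. First I would recall that the forward ball satisfies $B_{K\times L}((a,b),r) = (a,b) + \lambda\big((K\times L) - (a,b)\big)$ with $\lambda = 1 - e^{-r}$, and that this product dilation splits as $\big(a + \lambda(K-a)\big) \times \big(b + \lambda(L-b)\big) = B_K(a,r)\times B_L(b,r)$. So the domain of integration on the left-hand side is literally the product of the two domains appearing on the right.

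Next I would use the definition $\volht_M(U) = \omega_{\dim M}^{-1}\int_U |M^p|\,\dee p$ together with Lemma~\ref{lem:dual_with_respect_to_point}, or more directly the elementary fact that for $p = (p_1,p_2) \in K\times L$ one has $(K\times L)^p = (K\times L - p)^\circ$, and the polar of a product of convex bodies (each containing the relevant point in its interior) is the \emph{free sum} $(K-p_1)^\circ \oplus (L-p_2)^\circ = \operatorname{conv}\big((K^{p_1}\times\{0\}) \cup (\{0\}\times L^{p_2})\big)$. The key computational input is then the classical volume formula for a free sum: if $A\subset\R^k$ and $B\subset\R^l$ are convex bodies containing the origin, then
\begin{equation*}
|A\oplus B| = \frac{k!\,l!}{(k+l)!}\,|A|\,|B|.
\end{equation*}
Applying this with $A = K^{p_1}$ and $B = L^{p_2}$ gives $|(K\times L)^{(p_1,p_2)}| = \tfrac{k!\,l!}{(k+l)!}\,|K^{p_1}|\,|L^{p_2}|$.

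With these two ingredients in hand the proof is a short Fubini computation: integrating over $B_{K\times L}((a,b),r) = B_K(a,r)\times B_L(b,r)$ and using that the integrand factors,
\begin{equation*}
\volht_{K\times L}\big(B_{K\times L}((a,b),r)\big)
= \frac{1}{\omega_{k+l}}\cdot\frac{k!\,l!}{(k+l)!}
   \left(\int_{B_K(a,r)}|K^{p_1}|\,\dee p_1\right)
   \left(\int_{B_L(b,r)}|L^{p_2}|\,\dee p_2\right),
\end{equation*}
and rewriting each factor as $\omega_k\volht_K(B_K(a,r))$ and $\omega_l\volht_L(B_L(b,r))$ respectively yields the claimed identity after multiplying through by $(k+l)!\,\omega_{k+l}$. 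The only genuinely nontrivial step is the free-sum volume formula together with the observation that polarity turns products into free sums; once that is granted, everything else is bookkeeping. I would expect the main obstacle to be stating the free-sum identity in the correct affine generality (polarity taken with respect to the interior points $a$ and $b$ rather than the origin), which is handled cleanly by translating coordinates so that $a$ and $b$ become the origin before invoking Lemma~\ref{lem:dual_with_respect_to_point}.
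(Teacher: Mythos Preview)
Your proposal is correct and follows essentially the same route as the paper: identify the polar of a product as the free sum (the paper denotes it $A\times_1 B$), invoke the volume formula $|A\times_1 B|=\frac{k!\,l!}{(k+l)!}|A||B|$, and finish with Fubini over the product ball $B_K(a,r)\times B_L(b,r)$. The paper simply assumes $a=b=0$ by translation, exactly as you suggest at the end.
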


\begin{proof}
For convex bodies $A\subset\R^k$ and $B\subset \R^l$,
we define the convex body
\begin{equation*}
A \times_1 B
    = \Big\{\big(tx, (1-t)y\big)
          \mid x\in A, \,\,\, y\in B, \,\,\, 0\leq t\leq 1 \Big\}
    \subset \R^{k+l}.
\end{equation*}
Assuming $0\in A$ and $0\in B$, it has Lebesgue volume
\begin{equation*}
|A \times_1 B| = \frac {k! \, l!} {(k+l)!} |A| |B|.
\end{equation*}

Write $\lambda=1-e^{-R}$, and assume for simplicity that $a = b = 0$.
It holds, for all $(x,y)$ in the interior of $K\times L$,
that $(K\times L)^{(x,y)}=K^x\times_1L^y$, and so
\begin{align*}
\omega_{k + l} \,
  \volht_{K\times L}\bigl(B_{K\times L}\bigl((a, b), r\bigr)\bigr)
    &= \int_{\lambda (K\times L)} |(K\times L)^{(x,y)}| \, \dee x \, \dee y \\
    &= \int_{\lambda K\times\lambda L} |K^x\times_1L^y| \, \dee x \, \dee y \\
    &= \frac {k! \, l!} {(k+l)!} \int_{\lambda K}|K^x| \, \dee x
              \int_{\lambda L}|L^y| \, \dee y,
\end{align*}
concluding the proof.
\end{proof}

We can now calculate the volume of Funk balls in Hanner polytopes.

\begin{lemma}
\label{lem:same_hanner}
Let $H$ be a Hanner polytope in $\R^n$ centered at the origin,
and let $R>0$ and $\lambda = 1-e^{-R}$.
Then,
\begin{equation*}
\volht_H \big(B_H(r)\big)
    = \frac {2^n} {n! \, \omega_n} \left( \log\frac{1+\lambda}{1-\lambda}\right)^n
    = \frac {2^n} {n! \, \omega_n} \big(\log(2e^R-1)\big)^n.
\end{equation*}
\end{lemma}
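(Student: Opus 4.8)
The plan is to prove this by induction on the dimension $n$, using the multiplicative property established in Lemma~\ref{lem:multiplicative_volume} together with the inductive definition of Hanner polytopes as iterated Cartesian products and polars of line segments.

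For the base case $n=1$, the only Hanner polytope is a segment, which we may take to be $H = [-1, 1] \subset \R$. Here the Funk ball of radius $R$ centered at $0$ is $B_H(R) = [-\lambda, \lambda]$ with $\lambda = 1 - e^{-R}$, and one must compute $\volht_H(B_H(R)) = \omega_1^{-1}\int_{-\lambda}^{\lambda} |H^x|\, \dee x$. Since $\omega_1 = 2$, and by Lemma~\ref{lem:dual_with_respect_to_point} the polar $H^x = (H - x)^\circ$ of the segment with respect to $x \in (-1,1)$ is again a segment whose length is straightforward to read off, this reduces to an elementary integral of the form $\int_{-\lambda}^{\lambda} \frac{2}{1-x^2}\,\dee x = 2\log\frac{1+\lambda}{1-\lambda}$, yielding $\volht_H(B_H(R)) = \frac{2}{1 \cdot \omega_1}\log\frac{1+\lambda}{1-\lambda}$, which matches the claimed formula with $n=1$.

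For the inductive step, recall that any Hanner polytope $H$ of dimension $n$ is, up to linear isomorphism, either a Cartesian product $H_1 \times H_2$ of lower-dimensional Hanner polytopes, or the polar $(H_1 \times H_2)^\circ$ of such a product; and in the centrally symmetric setting these two operations suffice. In the product case, apply Lemma~\ref{lem:multiplicative_volume} directly: if $H = H_1 \times H_2$ with $\dim H_i = n_i$ and $n_1 + n_2 = n$, then
\begin{equation*}
n!\,\omega_n\,\volht_H(B_H(R)) = \big(n_1!\,\omega_{n_1}\volht_{H_1}(B_{H_1}(R))\big)\big(n_2!\,\omega_{n_2}\volht_{H_2}(B_{H_2}(R))\big),
\end{equation*}
and the inductive hypothesis gives each factor as $2^{n_i}\big(\log\frac{1+\lambda}{1-\lambda}\big)^{n_i}$, whose product is $2^n\big(\log\frac{1+\lambda}{1-\lambda}\big)^n$ as required. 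In the polar case $H = (H_1 \times H_2)^\circ$, I would invoke Proposition~\ref{prop:duality_volume} (invariance of Holmes--Thompson Funk volume under projective polarity) to reduce the volume of a ball in $H$ to the volume of a ball in its polar $H_1 \times H_2$, which is again a product and hence handled by the previous case. One must check that the ball $B_H(R)$, under polarity with respect to the origin, corresponds to the appropriate ball configuration in $H_1 \times H_2$; since $B_H(R) = \lambda H$ is just a scaled copy of $H$, and Proposition~\ref{prop:duality_volume} relates $\volht_L(K)$ to $\volht_{K^p}(L^p)$, this amounts to a bookkeeping exercise matching the scaling factor $\lambda$ through the polarity.

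The main obstacle is the polar case: making precise how the forward Funk ball of radius $R$ in $H$ transforms under projective polarity into a region in $H_1 \times H_2$ whose Funk volume we can identify. The cleanest route is probably to avoid the polar case entirely by a dimension-agnostic argument: observe that \emph{every} Hanner polytope of dimension $n$ has the property that $n!\,\omega_n\,\volht_H(B_H(R))$ is multiplicative in the sense above, so it suffices to verify the formula on a single convenient Hanner polytope in each dimension --- for instance the cube $[-1,1]^n$, which is the $n$-fold product of segments. For the cube, Lemma~\ref{lem:multiplicative_volume} applied $n$ times together with the base case immediately gives $n!\,\omega_n\,\volht_{[-1,1]^n}(B(R)) = \big(2\log\frac{1+\lambda}{1-\lambda}\big)^n = 2^n\big(\log\frac{1+\lambda}{1-\lambda}\big)^n$. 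Then, to conclude that \emph{all} Hanner polytopes of dimension $n$ share this value, one uses that the collection of Hanner polytopes for which the identity holds is closed under Cartesian products (by Lemma~\ref{lem:multiplicative_volume}, as above) and under polarity (by Proposition~\ref{prop:duality_volume}), and contains all segments; since every Hanner polytope is generated from segments by these two operations, the identity propagates to all of them. Finally, rewriting $\log\frac{1+\lambda}{1-\lambda} = \log\frac{2 - e^{-R}}{e^{-R}} = \log(2e^R - 1)$ gives the second form of the formula.
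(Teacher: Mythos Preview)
Your proposal is correct and follows essentially the same approach as the paper: compute the one-dimensional case directly, then use Lemma~\ref{lem:multiplicative_volume} for products and Proposition~\ref{prop:duality_volume} for polars, concluding by the inductive construction of Hanner polytopes from segments. The paper's proof is in fact even terser than your final formulation and does not spell out the polarity bookkeeping you worry about; for the record, that step goes through cleanly because $\volht_H(\lambda H)=\volht_{\lambda^{-1}H^\circ}(H^\circ)$ by Proposition~\ref{prop:duality_volume}, and $H^\circ=\lambda\cdot(\lambda^{-1}H^\circ)=B_{\lambda^{-1}H^\circ}(R)$, whence scale-invariance of the Funk Holmes--Thompson volume gives $\volht_H(B_H(R))=\volht_{H^\circ}(B_{H^\circ}(R))$.
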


\begin{proof}

An elementary calculation shows that, for $Q=[-1,1]\subset\R$, we have
\begin{equation*}
\omega_1 \, \volht_Q \big(B_Q(0,R)\big)
    = 2 \log\frac{1+\lambda}{1-\lambda}.
\end{equation*}
The conclusion follows on applying Proposition~\ref{prop:duality_volume}
and Lemma~\ref{lem:multiplicative_volume},
since any Hanner polytope can be constructed from intervals by taking
products and polars.
\end{proof}

\section{The Mahler inequality for unconditional bodies}
\label{unconditional}

It is convenient to extend the Funk Holmes--Thompson volume to functions,
as was done in \cite{faifman_funk}.
For a convex function $\phi \colon V\to \R\cup\{+\infty\}$ defined on $V=\R^n$,
and $R>0$, set 
\begin{equation*}
\widetilde M_\lambda(e^{-\phi})
    = \frac {\lambda^n} {n!}
      \int_{V\times V^*}
        e^{-\phi(x) - \mathcal L\phi(\xi) + \lambda\langle x,\xi\rangle}
      \, \dee x \, \dee \xi \text{,}
\end{equation*}
where $\lambda=1-e^{-R}$, and $\mathcal L$ is the \emph{Legendre transform}
given by
\begin{equation*}
\mathcal L\phi(\xi)
    = \sup_{x\in V} \big( \langle x, \xi\rangle -\phi(x) \big) \text{.}
\end{equation*}

Clearly
$\widetilde M_\lambda(e^{-\phi})=\widetilde M_\lambda(e^{-\mathcal L \phi})$. 
Note also that $\mathbbm 1_K^\infty:=-\log \mathbbm 1_K$ is a convex function
when $K$ is convex.

The following was verified in \cite[Lemma 8.5]{faifman_funk}.

\begin{lemma}
\label{lem:body_function_volume}
For a convex body $K$ and $0<\lambda<1$, one has
\begin{equation*}
\omega_n \, \volht_K(\lambda K)
    = \widetilde M_\lambda(\mathbbm 1_K)
    = \widetilde M_\lambda(e^{-h_K}),
\end{equation*}
where $h_K$ is the support function of $K$.
\end{lemma}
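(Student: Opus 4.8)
The plan is to reduce the statement to a direct computation of the two integrals appearing in the definition of $\widetilde M_\lambda$ and to match them against the definition of the Holmes--Thompson volume of $\lambda K$ given in the preliminaries. First I would record the two elementary facts about the indicator potential: $\mathcal L \mathbbm 1_K^\infty = h_K$, where $h_K$ is the support function of $K$, and the Legendre transform of $h_K$ is again $\mathbbm 1_K^\infty$ (since $K$ is closed and convex). This immediately gives the second equality $\widetilde M_\lambda(\mathbbm 1_K) = \widetilde M_\lambda(e^{-h_K})$, because $\widetilde M_\lambda(e^{-\phi}) = \widetilde M_\lambda(e^{-\mathcal L\phi})$ as noted in the excerpt; alternatively one sees it directly from the symmetry of the integrand in $(x,\xi)$ under swapping the roles of $\phi$ and $\mathcal L\phi$ together with $\langle x,\xi\rangle$ being symmetric.

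For the first equality I would plug $\phi = \mathbbm 1_K^\infty$ into the definition, so that $e^{-\phi(x)} = \mathbbm 1_K(x)$ and $e^{-\mathcal L\phi(\xi)} = e^{-h_K(\xi)}$, obtaining
\begin{equation*}
\widetilde M_\lambda(\mathbbm 1_K)
    = \frac{\lambda^n}{n!} \int_{K} \int_{V^*}
        e^{-h_K(\xi) + \lambda\langle x,\xi\rangle} \, \dee\xi \, \dee x.
\end{equation*}
The inner integral is evaluated by the standard ray (polar-coordinate) computation: writing $\xi = r\eta$ with $r \ge 0$ and $\eta$ ranging over a unit sphere, and using $h_K(r\eta) = r h_K(\eta)$, the $r$-integral becomes $\int_0^\infty r^{n-1} e^{-r(h_K(\eta) - \lambda\langle x,\eta\rangle)} \dee r = (n-1)! \, (h_K(\eta) - \lambda\langle x,\eta\rangle)^{-n}$, valid precisely because $x \in K$ and $\lambda < 1$ force $h_K(\eta) - \lambda\langle x,\eta\rangle > 0$ for all $\eta$. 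Recognizing that $h_K(\eta) - \lambda\langle x,\eta\rangle = h_{K-\lambda x}(\eta)$ is the support function of the translate $K - \lambda x$, the angular integral $\frac{1}{n}\int_{S^{n-1}} h_{K-\lambda x}(\eta)^{-n} \, \dee\eta$ is exactly the polar volume $|(K-\lambda x)^\circ| = |K^{\lambda x}|$ by the classical formula for the volume of a convex body in terms of the reciprocal $n$-th power of the support function of its polar. Collecting constants, the $(n-1)!$ cancels against the $n!$ up to a factor $n$ which is absorbed by the $\frac1n$ from polar coordinates, leaving
\begin{equation*}
\widetilde M_\lambda(\mathbbm 1_K)
    = \frac{\lambda^n}{n!} \cdot n! \cdot \frac1n \int_K |K^{\lambda x}| \, \dee x \cdot \frac{1}{\lambda^{?}} \ldots
\end{equation*}
so I would keep careful track here: after the substitution $x \mapsto \lambda x$ sending the domain $K$ to $\lambda K$ (contributing $\lambda^{-n}$) and using $(K - \lambda x)^\circ$ with the point now in $\lambda K$, the $\lambda^n$ prefactor is consumed, and what remains is $\int_{\lambda K} |(\lambda K)^{y}| \, \dee y$, which is precisely $\omega_n \volht_K(\lambda K)$ by the displayed definition of the Holmes--Thompson measure.

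The only genuine obstacle is bookkeeping: making sure the powers of $\lambda$, the factorials, the $\frac1n$ from spherical coordinates, and the change of variables $x \mapsto \lambda x$ all combine correctly, and that the polar body is taken with respect to the right point ($\lambda x$, not $x$ or $0$) — this is where Lemma~\ref{lem:dual_with_respect_to_point} and the identity $h_{K-v}(\eta) = h_K(\eta) - \langle v,\eta\rangle$ do the essential work. Everything else is an application of Fubini (justified by positivity of the integrand) and the classical support-function formula for volume. Since the excerpt attributes this to \cite[Lemma 8.5]{faifman_funk}, one may also simply cite it; I would include the short computation above for completeness.
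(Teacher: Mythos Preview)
Your approach is correct and matches the paper's own proof, which simply cites \cite[Lemma 8.5]{faifman_funk} for the first equality and invokes $\mathcal L(\mathbbm 1_K^\infty)=h_K$ for the second; you have unpacked the cited computation explicitly. One slip: in your final line you write $\int_{\lambda K} |(\lambda K)^{y}|\,\dee y$, but the Holmes--Thompson density at $y$ is $|K^y|$, not $|(\lambda K)^y|$ --- the polar is always taken of the ambient body $K$, so after the change of variables $y=\lambda x$ you get $\int_{\lambda K}|K^y|\,\dee y=\omega_n\volht_K(\lambda K)$ directly. The bookkeeping you flagged as the only obstacle is indeed routine once you use the standard identity $\int_{V^*}e^{-h_L(\xi)}\,\dee\xi=n!\,|L^\circ|$ with $L=K-\lambda x$: the $n!$ cancels the $1/n!$, and the $\lambda^n$ prefactor is absorbed by the Jacobian $\lambda^{-n}$ of the substitution, with no stray factors left over.
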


\begin{proof}
The first equality is \cite[Lemma 8.5]{faifman_funk}.
Since $\mathcal L(\mathbbm 1^\infty_K)=h_K$, we get the second one.
\end{proof}

\begin{theorem}
Let $K\subset \R^n$ be an unconditional convex body,
and $H\subset \R^n$ a Hanner polytope centered at the origin.
Then, for any  $0<R<\infty$, we have 
\begin{equation*}
{\volht_K} \big(B_K(R) \big)\geq \volht_H \big(B_H(R) \big).
\end{equation*}
Equality is uniquely attained by Hanner polytopes.
\end{theorem}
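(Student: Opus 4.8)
The plan is to reduce the $n$-dimensional unconditional inequality to a one-dimensional statement via the functional formulation in Lemma~\ref{lem:body_function_volume}. First I would use the fact that for an unconditional body $K$, the support function $h_K$ and all the relevant integrands are invariant under sign changes of the coordinates, so the integral defining $\widetilde M_\lambda(e^{-h_K})$ over $V\times V^*$ can be folded into an integral over the positive orthant $\R_+^n\times\R_+^n$ with the integrand $e^{-h_K(x)-h_{K^\circ}(\xi)+\lambda\sum |x_i||\xi_i|}$, where I have used that the Legendre transform of $h_K=\mathbbm 1_{K^\circ}^\infty$ is $\mathbbm 1_{K^\circ}^\infty$ and hence $\mathcal L h_K = \mathbbm 1_{K^\circ}$... more precisely, that $\mathcal L(\mathbbm 1_K^\infty) = h_K$, so one of the two convex functions in the symmetric expression $\widetilde M_\lambda(e^{-\phi}) = \widetilde M_\lambda(e^{-\mathcal L\phi})$ is $\mathbbm 1_K$ and the other is $h_K = \mathbbm 1_{K^\circ}^\infty$. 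So the key object is $\widetilde M_\lambda(\mathbbm 1_K)$, an integral of $e^{\lambda\langle x,\xi\rangle}$ over $K\times K^\circ$ (intersected appropriately), and for unconditional $K$ this becomes $\frac{\lambda^n}{n!}\cdot 2^n \int_{(K\cap\R_+^n)\times(K^\circ\cap\R_+^n)} e^{\lambda\langle x,\xi\rangle}\,dx\,d\xi$.

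The main step is then a one-dimensional coordinate-wise comparison. For unconditional bodies, the correct reference object is the cube $Q=[-1,1]^n$, which is a Hanner polytope, and by Lemma~\ref{lem:same_hanner} together with Lemma~\ref{lem:multiplicative_volume}, all Hanner polytopes give the same value. So it suffices to prove $\widetilde M_\lambda(\mathbbm 1_K) \geq \widetilde M_\lambda(\mathbbm 1_Q)$. I would attempt this by a symmetrization/rearrangement argument: since $K$ is unconditional, write $K\cap\R_+^n = \{x\in\R_+^n : \psi(x)\leq 1\}$ for a suitable unconditional gauge, and likewise for $K^\circ$; the polarity pairing between unconditional bodies behaves coordinatewise in a way that lets one estimate the double integral. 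The cleanest route is probably to use the known unconditional Mahler inequality (Saint-Raymond's theorem) as the $R\to 0$ endpoint, and interpolate: show that $\widetilde M_\lambda(\mathbbm 1_K)/\widetilde M_\lambda(\mathbbm 1_Q)$, expanded as a power series in $\lambda$, has every coefficient minimized at $K=Q$. The $\lambda^0$-type leading behavior is exactly the Mahler volume (the $R\to 0$ asymptotics mentioned in the introduction), and the higher terms are mixed volume-type quantities $\int_{K\times K^\circ}\langle x,\xi\rangle^k$ that should each be controlled by the same unconditional rearrangement. Alternatively — and this is likely what the authors do — one proves it directly: for unconditional $K$ one has $\int_{K^\circ\cap\R_+^n} e^{\lambda\langle x,\xi\rangle}\,d\xi$ is, for each fixed $x$, minimized (over unconditional $K$ with the constraint coming from $x\in K$) when $K=Q$, by a one-variable convexity argument applied in each coordinate separately, using that $e^{\lambda t}$ is convex.

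The hard part will be making the coordinatewise reduction rigorous: the polar body $K^\circ$ of an unconditional body is unconditional, but it does not split as a product, so the double integral over $K\times K^\circ$ does not factor, and one cannot naively reduce to dimension one. The trick I expect to need is to integrate out one coordinate at a time while keeping the others frozen, exploiting that the slices of an unconditional convex body in a coordinate direction are intervals symmetric about $0$, and that the polarity constraint, restricted to a single coordinate with the rest fixed, is again an interval constraint; then a one-dimensional inequality of the form ``$\int_{-a}^a\int_{-1/a}^{1/a} e^{\lambda xt}\,dx\,dt$ is non-increasing in... '' — no, rather, one shows the relevant one-dimensional integral is extremized at the segment. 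Tracking the equality case through this induction is what gives ``equality iff Hanner'': at each step the one-dimensional extremal is forced to be a genuine segment, and reassembling these forces $K$ to be built from segments by products and polars, i.e.\ a Hanner polytope.

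Finally, once the inequality $\widetilde M_\lambda(\mathbbm 1_K)\geq \widetilde M_\lambda(\mathbbm 1_Q)$ is established with equality characterization, I would invoke Lemma~\ref{lem:body_function_volume} to translate back to $\volht_K(B_K(R)) = \omega_n^{-1}\widetilde M_\lambda(\mathbbm 1_K) \geq \omega_n^{-1}\widetilde M_\lambda(\mathbbm 1_H) = \volht_H(B_H(R))$ for any Hanner polytope $H$ (the cube and any other Hanner polytope agreeing by Lemma~\ref{lem:same_hanner}), completing the proof. The equality statement ``uniquely attained by Hanner polytopes'' follows since the equality analysis forces the segment-decomposition structure, and conversely Lemma~\ref{lem:same_hanner} shows every Hanner polytope does achieve equality.
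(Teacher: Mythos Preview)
Your overall framework is correct: pass to the functional $\widetilde M_\lambda(\mathbbm 1_K)$ via Lemma~\ref{lem:body_function_volume}, compare to the cube $Q=[-1,1]^n$, and use Lemma~\ref{lem:same_hanner} to identify all Hanner polytopes. You also correctly identify (as one option) the approach the paper actually takes, namely expanding as a power series in $\lambda$ and showing each coefficient is minimized at $Q$. But you then discard this in favour of a coordinatewise induction, guessing that the latter is ``likely what the authors do''. It is not, and the gap in your proposal is precisely here.

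The paper stays with the power-series route. Since $K=-K$, one may replace $e^{\lambda\langle x,\xi\rangle}$ by $\cosh(\lambda\langle x,\xi\rangle)$, so the expansion involves only the even moments $I_{2j}(e^{-h_K})=\int_{\R^n\times\R^n}\langle x,\xi\rangle^{2j}e^{-h_K(x)-\mathbbm 1_K^\infty(\xi)}\,\dee x\,\dee\xi$. Unconditionality kills the cross terms in the multinomial expansion of $\langle x,\xi\rangle^{2j}$, leaving a sum over multi-indices $I$ of products $\int_{\R_+^n}x^{2I}e^{-\phi}\cdot\int_{\R_+^n}\xi^{2I}e^{-\mathcal L\phi}$. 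The missing ingredient you need is the Fradelizi--Meyer inequality~\cite[Theorem~6]{fradelizi_meyer}, which bounds each such product below by an explicit constant attained at $\phi(x)=\|x\|_{\ell_1^n}$, i.e.\ at the cube. This dispatches every term simultaneously. Your phrase ``should each be controlled by the same unconditional rearrangement'' is pointing in the right direction but does not name the tool; without it there is no proof.

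Your alternative coordinatewise reduction is, as you yourself note, not clearly rigorous: the polar body does not factor, your one-dimensional inequality is left unfinished, and it is not evident how to set up an induction that respects polarity. It may be possible, but it is not the easy road.

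For the equality case the paper's argument is much simpler than the inductive tracking you envisage: equality in the full statement forces equality already in the single term with $I=(0,\dots,0)$, which is exactly the equality case of the unconditional Mahler inequality; one then quotes Meyer~\cite{meyer_unconditional} and Reisner~\cite{reisner_unconditional} to conclude $K$ is Hanner.
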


\begin{proof}
Consider an unconditional convex function
$\phi \colon \R^n\to \R\cup\{+\infty\}$.

Unconditional here means again that it is invariant under reflections
in the coordinate hyperplanes.
Define, for each $j\geq 0$,
\begin{equation*}
I_{2j}(e^{-\phi})
    := \int_{\R^n\times\R^n} \langle x,\xi\rangle ^{2j} \,
           e^{-\phi(x)-\mathcal L\phi(\xi)} \, \dee x \, \dee\xi \text{.}
\end{equation*}

Due to the unconditionality of $\phi$, we have
\begin{align*}
I_{2j}(e^{-\phi})
    &= \sum_{|I|=j} \binom {2j} {2I}
           \int_{\R^n} x^{2I}e^{-\phi(x)} \, \dee x
           \int_{\R^n}\xi^{2I}e^{- \mathcal L \phi(\xi)} \, \dee \xi \\
    &= 4^n \sum_{|I|=j} \binom {2j} {2I}
           \int_{\R_+^n} x^{2I}e^{-\phi(x)} \, \dee x
           \int_{\R_+^n}\xi^{2I}e^{- \mathcal L \phi(\xi)} \, \dee \xi,
\end{align*}
where the sum is over all $n$-tuples $I = (i_1,\dots, i_n)$
of non-negative integers with $|I| : =\sum_{k=1}^n i_k = j$.

We are using here the notation
$x^{2I} := \prod_{k=1}^{n} x_k^{2i_k}$ and
\begin{equation*}
\binom {2j} {2I} := \frac {(2j)!} {(2 i_1)! \cdots (2 i_n)!}.
\end{equation*}

By the inequality of Fradelizi--Meyer~\cite[Theorem 6]{fradelizi_meyer},
we have
\begin{equation}
\label{eq:ineq_I}
\int_{\R_+^n} x^{2I} \, e^{-\phi(x)} \, \dee x
        \int_{\R_+^n} \xi^{2I} \, e^{-\mathcal L \phi(\xi)} \, \dee\xi
    \geq \prod_{k=1}^n \frac{\Gamma(2i_k+2)} {(2i_k+1)^2},
\end{equation}
where $\Gamma$ denotes Euler's Gamma function.
Equality, for any given $j$ and $I$,
occurs (non-uniquely) when $\phi(x) = \|x\|_{\ell_1^n} := \sum |x_k|$,
the support function of the cube $[-1,1]^n$. 

Denoting $\lambda=1-e^{-R}$, we have $B_K(R)=\lambda K$.
Since $K=-K$, we have by Lemma~\ref{lem:body_function_volume},
\begin{align*}
\omega_n \, \volht_K(\lambda K)
   &= \widetilde M_\lambda(\mathbbm 1_K)
    = \frac {\lambda^n} {n!}
          \int_{K\times\R^n} e^{-h_K(\xi)+\lambda\langle x, \xi\rangle}
          \, \dee x \, \dee\xi \\
    &= \frac {\lambda^n} {n!}
                   \int_{K\times\R^n} e^{-h_K(\xi)}
              {\cosh} \big(\lambda \langle x,\xi \rangle \big)
          \, \dee x \, \dee\xi \\
    &=\frac{1}{n!} \sum_{j=0}^\infty
                      \frac{1}{(2j)!}I_{2j}(e^{-h_K})\lambda^{n+2j}.   
\end{align*}
The last equality here is obtained by expanding the hyperbolic cosine as
a power series, and using that
$I_{2j}(\exp({-\mathbbm 1_K^\infty})) = I_{2j}(\exp({-h_K}))$.

Thus all summands in the series for $\volht_K(\lambda K)$ are simultaneously
minimized when $h_K$ is the support function of the cube.
That is, among all unconditional convex bodies in $\R^n$,
the cube minimizes $\volht_K(\lambda K)$.
Applying Lemma \ref{lem:same_hanner}, we see that the minimum is also attained
by any Hanner polytope.

For the equality cases, it follows from the proof that equality must hold
in equation~\eqref{eq:ineq_I} for every $I$,
in particular when $I=(0, \dots 0)$, so that

\begin{equation*}
\int_{\R_+^n} e^{-\mathbbm 1^\infty_K(x)} \, \dee x
    \int_{\R_+^n}e^{-h_K(\xi)} \, \dee \xi = 1,
\end{equation*}
that is 
\begin{equation*}
\frac{n!}{4^n}|K||K^\circ|= 1.
\end{equation*}
The minimizers of the volume product among unconditional convex bodies were shown to coincide with the Hanner polytopes by Meyer \cite{meyer_unconditional} and Reisner \cite{reisner_unconditional}. This concludes the proof.
\end{proof}

\section{A decomposition of the volume}
\label{sec:volume_dual}

In this section and the next, we determine the two highest order terms
of growth of the balls in the Funk geometry on a polytope $P$.
Our strategy will be to decompose both $P$ and its polar into flag simplices,
and to consider separately each pairing of a flag simplex
with a dual flag simplex.

To this end, for each face $F$ of $P$, we make a choice of point $p(F)$
in the relative interior of $F$. When $F$ is a vertex, this choice of course
can only be the vertex itself. When $F = P$, we choose $p(P)$ to be the origin.
Our choice of points leads to a flag decomposition of $P$:
to each flag $f$ of $P$ is associated a flag simplex, which is the convex
hull of the points $p(f_0), \dots, p(f_n)$.
The interiors of these simplices are pairwise disjoint, and their union
equals $P$.

Similarly, for each face $G$ of $P^\circ$, we make a choice of point $q(G)$
in the relative interior of $G$.
Again, this leads to a flag decomposition of $P^\circ$.

For each flag $f$ of $P$,
denote by $\Delta^f := \conv\{p(f_0), \dots, p(f_n)\}$
the associated flag simplex.
We also define, for each flag $f$ and $\tau \in (0, 1)$,
the truncated simplex $\Delta_\tau^f := (1 -\tau) \Delta^f$.
Recall here that $p(P) = 0$.
For each dual flag $g\in\flags(P^\circ)$ and $i\in\{0, \dots, n-1\}$,
let $x^g_i(\cdot) := 1 - \dotprod {q(g_i)} {\cdot}$.
The functions $x^g_i$ provide a set of coordinates on $\R^n$,
for any given dual flag $g$.
We denote by $\leb_n$ the $n$-dimensional Lebesgue measure.

\begin{lemma}
\label{lem:combine_all_the_flags}
The Holmes--Thompson volume of the ball $B_P(0, R)$ in the Funk geometry
is given, for $\tau := \exp(-R)$, by
\begin{equation*}
\volht\big(B_P(0, R)\big)
    = \frac {1} {n! \, \omega_n} \sum_{f\in\flags(P)} \sum_{g\in\flags(P^\circ)}
       \int_{\Delta_\tau^f}
       \frac {\dee x^g_0 \cdots \dee x^g_{n-1}} {x^g_0 \cdots x^g_{n-1}}\text{.}
\end{equation*}
\end{lemma}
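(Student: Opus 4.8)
The plan is to express the Holmes--Thompson volume of $B_P(0,R) = \lambda P$ (with $\lambda = 1-\tau$, $\tau = e^{-R}$) as an integral over $\lambda P$ of the Lebesgue volume of the polar body $P^x$ taken with respect to $x$, and then to split \emph{both} the domain of integration and the integrand according to the two flag decompositions. Concretely, starting from
\[
\omega_n\,\volht\big(B_P(0,R)\big) = \int_{\lambda P} |P^x|\,\dee\leb_n(x),
\]
I would first decompose the outer domain: since the flag simplices $\Delta^f$ ($f\in\flags(P)$) have pairwise disjoint interiors and union $P$, and scaling by $\lambda$ commutes with this, we get $\lambda P = \bigcup_f \lambda\Delta^f = \bigcup_f \Delta^f_\tau$ up to a null set, so the integral becomes $\sum_{f\in\flags(P)}\int_{\Delta^f_\tau}|P^x|\,\dee\leb_n(x)$.

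Next I would handle the integrand $|P^x|$. By Lemma~\ref{lem:dual_with_respect_to_point}, $P^x = (P-x)^\circ$ is an honest convex body for $x\in\interior P$, and it decomposes into flag simplices via the dual flag decomposition of $P^\circ$: applying the map $w\mapsto w/(1-\dotprod{w}{x})$ of that lemma to each dual flag simplex $\conv\{q(g_0),\dots,q(g_n)\}$ (note $q(P^\circ)=0$ can be taken as the apex, matching $p(P)=0$) partitions $P^x$ into simplices with pairwise disjoint interiors, so $|P^x| = \sum_{g\in\flags(P^\circ)}|S^g_x|$, where $S^g_x$ is the image of the $g$-th dual flag simplex. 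The main computational step is then to evaluate $|S^g_x|$: I would parametrize the dual flag simplex affinely and push the volume form through the fractional-linear map $w\mapsto w/(1-\dotprod{w}{x})$, whose Jacobian is a power of $(1-\dotprod{w}{x})^{-1}$. After the dust settles this should give, for each fixed $g$, that $|S^g_x|$ equals a constant (depending only on the chosen points $q(g_i)$, absorbed into the $\frac{1}{n!}$) times $\prod_{i=0}^{n-1}\big(1-\dotprod{q(g_i)}{x}\big)^{-1} = \prod_{i=0}^{n-1} x^g_i(x)^{-1}$; the vertex $q(g_n)=0$ contributes trivially since $1-\dotprod{0}{x}=1$, which is exactly why only indices $0,\dots,n-1$ appear.

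Finally I would combine: substituting the formula for $|P^x|$ into the sum over $f$, interchanging the two finite sums, and recognizing that on each $\Delta^f_\tau$ the functions $(x^g_0,\dots,x^g_{n-1})$ form affine coordinates, so $\dee\leb_n(x)$ equals (the same constant Jacobian) times $\dee x^g_0\cdots\dee x^g_{n-1}$, yields
\[
\omega_n\,\volht\big(B_P(0,R)\big) = \frac{1}{n!}\sum_{f\in\flags(P)}\sum_{g\in\flags(P^\circ)}\int_{\Delta^f_\tau}\frac{\dee x^g_0\cdots\dee x^g_{n-1}}{x^g_0\cdots x^g_{n-1}},
\]
as claimed. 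The step I expect to be the main obstacle is the precise bookkeeping of constants in the Jacobian computation for $|S^g_x|$: I must check that the affine volume of the dual flag simplex and the affine-coordinate Jacobian on the primal side cancel exactly (leaving only the $1/n!$), and that the fractional-linear Jacobian produces \emph{precisely} the product $\prod_{i<n} x^g_i^{-1}$ with no stray factor — in particular that the correct power is $n$ and not $n+1$, which hinges on using $q(g_n)=0$. A subsidiary point to verify is that the images $S^g_x$ really do tile $P^x$ without overlap, i.e.\ that the fractional-linear map is a bijection preserving the combinatorics of the face lattice on the relevant region, which follows from Lemma~\ref{lem:dual_with_respect_to_point} together with the fact that projective maps send the flag decomposition of $P^\circ$ to the flag decomposition of $(P-x)^\circ$.
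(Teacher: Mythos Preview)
Your proposal is correct and follows essentially the same approach as the paper: decompose the outer integral over $\lambda P$ into the flag simplices $\Delta^f_\tau$, decompose $|P^x|$ via the dual flag decomposition pushed through the map $w\mapsto w/(1-\langle w,x\rangle)$, and then observe that the constant $|q(g_0)\wedge\cdots\wedge q(g_{n-1})|$ cancels against the Jacobian of the affine change of variables $x\mapsto(x^g_0,\dots,x^g_{n-1})$, leaving only the $1/n!$.

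The one place the paper is cleaner is exactly the step you flagged as the main obstacle: rather than integrating the pointwise Jacobian $(1-\langle w,x\rangle)^{-(n+1)}$ of the fractional-linear map over the dual flag simplex, the paper simply notes that the image $S^g_x$ is itself the simplex with vertices $0$ and $q^x(g_i):=q(g_i)/(1-\langle q(g_i),x\rangle)$, so
\[
|S^g_x|=\frac{1}{n!}\bigl|q^x(g_0)\wedge\cdots\wedge q^x(g_{n-1})\bigr|
=\frac{1}{n!}\,\frac{|q(g_0)\wedge\cdots\wedge q(g_{n-1})|}{\prod_{i=0}^{n-1}\bigl(1-\langle q(g_i),x\rangle\bigr)}
\]
by multilinearity of the wedge. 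This bypasses the Jacobian bookkeeping entirely and makes the cancellation with $\dee x^g_0\cdots\dee x^g_{n-1}=|q(g_0)\wedge\cdots\wedge q(g_{n-1})|\,\dee\leb_n$ transparent.
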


\begin{proof}
The volume of the ball is
\begin{equation*}
\volht(B_P(0, R))
    = \frac {1} {\omega_n} \int_{B_P(0, R)} |P^y| \,\dee\!\leb_n(y),
\end{equation*}
where $P^y$ denotes the polar of $P$ with respect to the point $y$.
Observe that $B_P(0, R)$ is the union of the simplices $\Delta_\tau^f$,
and that the interiors of these simplices do not intersect.
We conclude that
\begin{equation}
\label{eqn:split_flags}
\volht\big(B_P(0, R)\big)
    = \frac {1} {\omega_n}
         \sum_{f\in\flags(P)} \int_{\Delta_\tau^f} |P^y| \,\dee\!\leb_n(y).
\end{equation}

Recall that the $n$-dimensional Lebesgue volume of a simplex with vertices
$v_0, \dots, v_{n-1}, 0$ is $|v_0 \land \dots \land v_{n-1}| / n!$.
So, we have
\begin{equation*}
|P^\circ|
    = \frac {1} {n!}
        \sum_{g\in\flags(P^\circ)}
        |q(g_0)\land\dots\land q(g_{n-1})|.
\end{equation*}
As we vary $y$ over $\interior P$, the shape of the polar $P^y$
changes according to Lemma~\ref{lem:dual_with_respect_to_point}.
Note that the combinatorics stay the same. In fact, the change of shape
is always by a projective map.
For each face $G$ of $P^\circ$ and point $y$ in $\interior P$,
let
\begin{equation*}
q^y(G) := \frac {q(G)} {1 - \dotprod {q(G)} {y}}.
\end{equation*}
Then, the $q^y(G)$ define a flag decomposition of $P^y$.
So, we may write
\begin{align}
|P^y| &= \frac {1} {n!}
            \sum_{g\in\flags(P^\circ)}
            |q^y(g_0)\land\dots\land q^y(g_{n-1})| \nonumber \\
\label{eqn:volume_of_dual_ball}
      &= \frac {1} {n!}
            \sum_{g\in\flags(P^\circ)}
            \frac {|q(g_0)\land\dots\land q(g_{n-1})|}
                  {\big(1 - \dotprod {q(g_0)} {y}\big)
                  \cdots \big(1 - \dotprod {q(g_{n-1})} {y}\big)}.
\end{align}
Changing to the coordinates $x^g_i$ changes the measure
in the following way:
\begin{equation*}
\dee x^g_0 \cdots \dee x^g_{n-1}
    = \big|q(g_0)\land\dots\land q(g_{n-1})\big| \, \dee\!\leb_n.
\end{equation*}
Combining this with~(\ref{eqn:split_flags}) and~(\ref{eqn:volume_of_dual_ball})
gives the result.
\end{proof}

So we see that the Funk volume can be computed by calculating an integral
of a relatively simple function over a truncated flag simplex with
respect to a coordinate system coming from the dual flag simplex.
This is done for every combination of flag simplex and dual flag simplex,
and the results are added.

We will see that the highest order term as $R$ tends to infinity is determined
by the flag simplices combined with their duals;
the second highest order term is determined by
these pairs as well as pairs where the flag simplex and the dual flag simplex
are nearest neighbours, in the sense that the associated flags differ by
exactly one face;
in the following sections we calculate the contributions of the relevant pairs.

Let us fix notation that will be used throughout this section. 
Recall that $f(x) = O(x)$ means that $f(x) / x$ is bounded,
whereas $f(x) = o(x)$ means that $f(x) / x$ converges to zero
as $x$ tends to infinity.

Recall that we have the following elementary integral:
for $a\in \R$ and $k \in \N$,
\begin{equation}
	\label{eqn:integrate_log_x_over_x}
	\int \frac {(\log x + a)^k} {x}  \,\dee x
	= \frac {1} {k + 1} (\log x + a)^{k + 1} + C.
\end{equation}
For each $k \in \N$, define the function $\density^k \colon \R_{>0}^n \to \R$
by
\begin{equation*}
\density^k(x) := \frac {(\log x_n - \log \tau)^k} {x_1 \cdots x_n}.
\end{equation*}
Here $\R_{>0}$ is the set of positive real numbers.
We use the convention that $\N$, the set of natural numbers, contains $0$.

\subsection{Combining a flag simplex and its own dual}

We start off by considering the case where the flag simplex and
the dual flag simplex are associated to the same flag.
These pairs will be seen to contribute to both the $R^n$ and $R^{n-1}$ terms.

Let
\begin{equation}
\label{eqn:Delta}
\Delta := \conv\{v_0, \dots, v_n\}
\end{equation}
be an $n$-dimensional simplex with vertices
\begin{align*}
v_0 &:= (0, 0, \dots, 0) \\
v_1 &:= (v_{1,1}, 0, \dots, 0) \\
    &\,\,\,\,\vdots \\
v_{n} &:= (v_{n1}, v_{n2}, \dots, v_{nn}).
\end{align*}
All non-zero coordinates are assumed positive.

Denote by $\Delta_\tau$ the set
$\Delta \backslash (\R^{n-1} \times [0,\tau))$,
in other words, the subset of $\Delta$ where the last coordinate
is greater than or equal to $\tau$.

We are interested in the growth of the integral of $\density^k$
over $\Delta_\tau$, as $\tau$ decreases.
It is actually the case $k = 0$ that we will use later,
but it is important for the inductive proof to consider more general $k$.

We will need the following two lemmas. The first shows that, by scaling the
coordinates, we may fit the simplex $\Delta$ into the ``ordered simplex''.

\begin{lemma}
\label{lem:quasi_standard}
Let $\Delta$ be a simplex of the form (\ref{eqn:Delta}).
Then, there exist positive real numbers $a_i$ such that
\begin{equation*}
0 \leq a_n x_n \leq \dots \leq a_1 x_1 \leq 1,
\qquad
\text{for all $x\in \Delta$}.
\end{equation*}
\end{lemma}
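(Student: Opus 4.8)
The plan is to exhibit the scaling constants $a_i$ explicitly and verify the two chains of inequalities using only the facts that $\Delta$ is the convex hull of the listed vertices and that all nonzero coordinates $v_{ij}$ are positive. First I would note that it suffices to check the claimed inequalities at the vertices $v_0,\dots,v_n$, since $x\mapsto a_ix_i$ is linear and the conditions $a_nx_n\le\dots\le a_1x_1\le 1$ and $x_n\ge 0$ define an intersection of halfspaces, hence a convex set; once all vertices lie in it, so does $\Delta$. The vertex $v_0$ is the origin and trivially satisfies everything. For the remaining vertices, observe the lower-triangular structure: $v_k$ has $(v_k)_j = 0$ for $j > k$ and $(v_k)_k = v_{kk} > 0$.

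Next I would construct the $a_i$ by downward induction (or, equivalently, by writing down a closed form). The natural choice is to pick $a_1 = 1/\max_k v_{k1}$ so that $a_1 x_1 \le 1$ for all $x\in\Delta$; more generally, having chosen $a_1,\dots,a_{i-1}$, choose $a_i > 0$ small enough that $a_i (v_k)_i \le a_{i-1}(v_k)_{i-1}$ for every vertex $v_k$ with $k \ge i$ (for $k < i$ the $i$-th coordinate of $v_k$ vanishes, so the inequality $a_ix_i \le a_{i-1}x_{i-1}$ reads $0 \le 0$ or $0\le$ a nonnegative number, which is automatic). Since there are finitely many vertices and all the relevant coordinates $(v_k)_{i-1}$ with $k\ge i\ge 2$ are strictly positive (indeed $(v_{i-1})_{i-1} = v_{i-1,i-1} > 0$ is the only subtle case, and for $k\ge i$ we have $(v_k)_{i-1} = v_{k,i-1}$ which is a nonzero, hence positive, coordinate of $v_k$ only when $k-1 \ge i-1$, i.e.\ always), such a positive $a_i$ exists: one may simply take
\begin{equation*}
a_i := a_{i-1}\min_{k\ge i}\frac{(v_k)_{i-1}}{(v_k)_i},
\end{equation*}
with the convention that we only include $k$ in the minimum when $(v_k)_i \neq 0$; this is a positive number because it is a minimum of finitely many positive ratios. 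Finally $a_n x_n \ge 0$ for all $x\in\Delta$ because every vertex has nonnegative last coordinate and $a_n>0$.

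The only point that needs genuine care — and the one I would flag as the main obstacle — is checking that the denominators $(v_k)_i$ appearing in the formula for $a_i$ are never zero in a way that breaks the induction, i.e.\ that for each $i$ there is at least one vertex $v_k$, $k\ge i$, with $(v_k)_i > 0$, and that for such $k$ the coordinate $(v_k)_{i-1}$ is also positive. The first holds because $(v_i)_i = v_{ii} > 0$. For the second, one uses that the nonzero coordinates are positive together with the triangular pattern: if $(v_k)_i > 0$ then $k \ge i > i-1$, so $(v_k)_{i-1} = v_{k,i-1}$ is a coordinate that is allowed to be nonzero, and whenever it is nonzero it is positive; the borderline subcase where $v_{k,i-1} = 0$ while $v_{k,i}>0$ cannot occur for $k=i$ since $v_{i,i-1}$ would then be below the diagonal — actually this subcase is harmless, as then the constraint $a_ix_i\le a_{i-1}x_{i-1}$ at $v_k$ would force $a_i \le 0$, contradicting positivity, so one must verify separately that the triangular structure of $\Delta$ rules it out. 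I would handle this by reindexing or by arguing directly that $v_{k,i-1}=0 \Rightarrow v_{k,i}=0$ is \emph{not} assumed, and instead relying on a cleaner inductive bound; in practice the cleanest route is to build the $a_i$ starting from $a_n$ and moving up, scaling so that the simplex is squeezed into $\{0\le x_n\le x_{n-1}\le\dots\le x_1\le 1\}$ one coordinate at a time, at which stage the triangular form makes each step a finite positivity check. This completes the construction.
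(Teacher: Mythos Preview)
Your approach is correct and essentially identical to the paper's: reduce to the vertices by convexity, then choose $a_1,a_2,\dots,a_n$ inductively so that $a_j v_{ij}\le a_{j-1}v_{i,j-1}$ holds at every vertex. The one worry you flag---that $v_{k,i-1}=0$ while $v_{k,i}>0$---does not arise: in the form~(\ref{eqn:Delta}) as used throughout the paper one has $v_{ij}>0$ for every $j\le i$, so each ratio in your formula for $a_i$ is strictly positive and the induction goes through without the extra casework.
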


\begin{proof}
First, choose $a_1>0$ so that $a_1 v_{i1} \leq 1$, for all $i$.
Then, choose $a_2>0$ so that $a_2 v_{i2} \leq a_1 v_{i1}$, for all $i$.
Continue in this way until all the $a_i$ are chosen.
It is clear that any convex combination of the $v_i$ then
satisfies the required relations.
\end{proof}

The next lemma shows that, when we restrict our attention to a region where
one of the coordinates is bounded below,
we get a bound on the growth rate of the integral of $F^k$.

\begin{lemma}
\label{lem:only_x_axis_counts}
Let $U(R)$ be the integral of $\density^k$ over the intersection of
$\Delta_\tau$ with the region $\{x_j \geq \beta\}$,
where $j\in\{1,\dots,n-1\}$ and $\beta>0$, with $\tau = \exp(-R)$.
Then, $U(R) = O(R^{n+k-j})$.
\end{lemma}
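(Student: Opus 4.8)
The plan is to reduce the $n$-dimensional integral to a nested one-dimensional computation, handling the $x_j$-direction first since that is where we have a lower bound $\beta$ that prevents any logarithmic blow-up. First I would apply Lemma~\ref{lem:quasi_standard} to rescale coordinates, which changes $\density^k$ only by an overall positive constant (the Jacobian of a diagonal scaling) and by shifting $\log x_n$ by a bounded constant; crucially, inside the integrand $(\log x_n - \log\tau)^k$, a bounded additive shift in $\log x_n$ only changes things by lower-order terms in $R$, so it is harmless for an $O(R^{n+k-j})$ estimate. After rescaling we may assume $0 \le x_n \le \dots \le x_1 \le 1$ on $\Delta$, and we are integrating over the region where additionally $x_n \ge \tau$ and $x_j \ge \beta'$ (with $\beta'$ the rescaled constant).

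The key step is to bound the integral by enlarging the domain to the product-type box $\{\tau \le x_n \le x_{n-1} \le \dots \le x_{j} \}\cap\{x_j \ge \beta'\} \times \{x_{j-1}, \dots, x_1 \in [\beta', 1]^{j-1}\}$, or more simply to integrate the $j$ innermost-indexed coordinates $x_1, \dots, x_{j-1}$ over intervals bounded away from $0$. Since $x_1, \dots, x_{j-1} \ge x_j \ge \beta'$ on the region, the factors $1/x_1, \dots, 1/x_{j-1}$ are each bounded by $1/\beta'$, so integrating them out contributes only an $O(1)$ constant. We are left with
\begin{equation*}
U(R) = O\!\left( \int_{\beta'}^{1} \frac{\dee x_j}{x_j} \int_{\tau}^{x_j} \frac{\dee x_{j+1}}{x_{j+1}} \cdots \int_{\tau}^{x_{n-1}} \frac{(\log x_n - \log\tau)^k}{x_n}\,\dee x_n \right),
\end{equation*}
where I have used $x_j \le 1$ and $x_{i+1} \le x_i$ to bound the upper limits of integration, and $x_i \ge \tau$ (implied by $x_i \ge x_n \ge \tau$, though in fact we can just take the outer limits as shown). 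Now I would integrate iteratively from the inside out using~(\ref{eqn:integrate_log_x_over_x}): the innermost integral over $x_n$ produces $\frac{1}{k+1}(\log x_{n-1} - \log\tau)^{k+1} + C$; feeding this into the $x_{n-1}$ integral and using~(\ref{eqn:integrate_log_x_over_x}) again raises the exponent and the integration degree by one each time. After performing the $n-j$ integrations over $x_{j+1}, \dots, x_n$, the result is a polynomial of degree $k + (n-j)$ in $(\log x_j - \log\tau)$, hence bounded on $x_j \in [\beta', 1]$ by $O((\log(1/\tau))^{k+n-j}) = O(R^{k+n-j})$ up to lower-order terms. The final integration $\int_{\beta'}^1 \dee x_j / x_j$ is over a fixed interval bounded away from $0$, so it contributes only a bounded factor and does \emph{not} raise the power of $R$. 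This yields $U(R) = O(R^{n+k-j})$.

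The main obstacle — really the only subtlety — is bookkeeping the order of integration so that the ``good'' variable $x_j$ (bounded below by $\beta'$) is integrated \emph{last}, after the coordinates $x_{j+1}, \dots, x_n$ that can descend toward $0$ and thus generate the logarithmic factors. One must also be careful that the ordered-simplex constraint $x_n \le \dots \le x_1$ lets us legitimately bound each upper limit of integration by the next outer variable (or by $1$ or $x_j$), keeping the iterated integral in the clean form above; and that the harmless rescaling constants and the bounded shift in $\log x_n$ from Lemma~\ref{lem:quasi_standard} genuinely affect only lower-order terms. Provided those are tracked, the estimate follows directly from repeated application of the elementary integral~(\ref{eqn:integrate_log_x_over_x}).
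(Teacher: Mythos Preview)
Your proposal is correct and follows the same core strategy as the paper: invoke Lemma~\ref{lem:quasi_standard} to control the coordinates, then integrate explicitly using~\eqref{eqn:integrate_log_x_over_x}. The execution differs slightly. Rather than rescaling and performing a nested integration over the ordered simplex, the paper simply observes that (without changing variables) the domain of integration is contained in a product box
\[
[\alpha,\gamma]^{j} \times [\delta\tau, \gamma]^{n-j-1} \times [\tau, \gamma],
\]
since $x_j\ge\beta$ together with the inequalities of Lemma~\ref{lem:quasi_standard} forces $x_1,\dots,x_j$ into fixed intervals bounded away from~$0$, while $x_{j+1},\dots,x_{n-1}$ are bounded below by constant multiples of~$\tau$. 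The integral over this box then factors as a product of $n$ independent one-dimensional integrals, giving
\[
U(R) \le (\log\gamma - \log\alpha)^{j}\,(\log\gamma - \log\delta + R)^{n-j-1}\,\frac{(\log\gamma + R)^{k+1}}{k+1}
\]
in one line, with no need to track an order of integration. Your nested approach is valid but more elaborate than necessary. (One small imprecision in your write-up: under the diagonal rescaling $y_i=a_i x_i$, the Jacobian actually cancels against the factor $1/(x_1\cdots x_n)$, and the shift in $\log x_n$ is exactly matched by the shift in the domain constraint $x_n\ge\tau$, so effectively only $\tau$ is replaced by $a_n\tau$; this of course does not affect the $O$-estimate.)
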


\begin{proof}
Using Lemma~\ref{lem:quasi_standard}, we see that the domain of integration
is contained in
\begin{equation*}
[\alpha,\gamma]^{j} \times [\delta\tau, \gamma]^{n-j-1} \times [\tau, \gamma],
\end{equation*}
with $\delta$, $\alpha$, and $\gamma$ positive real numbers.
The integrand can be integrated easily over this set
with the help of \eqref{eqn:integrate_log_x_over_x},
and thus we get the following bound:
\begin{equation*}
U(R) \leq
    (\log \gamma - \log \alpha)^{j}
    (\log \gamma - \log \delta + R)^{n-j-1}
    \frac {(\log \gamma + R)^{k+1}} {k+1}.
\qedhere
\end{equation*}
\end{proof}

\begin{lemma}
\label{lem:integral_induction}
Let $\Delta$ be of the form \eqref{eqn:Delta},
and let $\Delta_\tau$ be defined as above.
For each $R > 0$ and $k\in\N$, write
\begin{equation*}
V(R) := \frac {1} {k!}
        \int_{\Delta_\tau} F^k
        \,\dee\!\leb_n,
\end{equation*}
where $\tau = \exp(-R)$.
Then, for all $k\in\N$,
\begin{equation*}
V(R) = \frac {R^{n+k}} {(n + k)!}
         + \frac {R^{{n+k}-1}} {({n+k}-1)!} \Big(
             \sum_{i=1}^n \log v_{i,i}
         -   \sum_{i=1}^{n-1} \log v_{i+1,i} \Big)
         + o(R^{{n+k}-1}).
\end{equation*}
\end{lemma}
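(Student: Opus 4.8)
The plan is to induct on $n$, the dimension of the simplex $\Delta$, for all $k\in\N$ simultaneously. The base case $n=1$ is a direct computation: here $\Delta_\tau = [\tau, v_{1,1}]$, and by \eqref{eqn:integrate_log_x_over_x} with $a = -\log\tau = R$ we get
\begin{equation*}
V(R) = \frac{1}{k!}\int_\tau^{v_{1,1}} \frac{(\log x_1 + R)^k}{x_1}\,\dee x_1
     = \frac{1}{(k+1)!}\Big((\log v_{1,1} + R)^{k+1} - 0\Big),
\end{equation*}
and expanding $(R + \log v_{1,1})^{k+1}$ in powers of $R$ gives exactly the claimed two leading terms (with the empty sum $\sum_{i=1}^{0}$ equal to zero).

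For the inductive step, I would slice $\Delta_\tau$ along the coordinate hyperplanes orthogonal to $x_n$ — or better, Fubini out the last coordinate. The key geometric observation is that the facet of $\Delta$ opposite $v_0$, call it $\Delta' = \conv\{v_1,\dots,v_n\}$, projects (after dropping the appropriate coordinate) onto an $(n-1)$-dimensional simplex of the same form \eqref{eqn:Delta}, and the whole simplex $\Delta$ is the cone over $\Delta'$ from $v_0 = 0$. Writing a point of $\Delta$ as $t\,w$ with $w$ ranging over (a translate/projection of) $\Delta'$ and $t\in[0,1]$, the integral of $F^k$ over $\Delta_\tau$ decomposes. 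First I would handle the truncation: by Lemma~\ref{lem:only_x_axis_counts}, the contribution of the region where any of $x_1,\dots,x_{n-1}$ is bounded away from zero is $O(R^{n+k-1})$ at worst when $j=1$ — wait, that is the same order as the second term, so more care is needed; in fact one uses that lemma only for $j \geq 2$ to absorb those pieces into the $o(R^{n+k-1})$ error, and treats the $j=1$ strip, i.e. the neighbourhood of the facet $\{x_1 = 0\}$, separately. The surviving region is a thin wedge near $\{x_1 = \cdots = x_{n-1} = 0\}$, where the integral over the inner variables (using the change of variables that makes $\Delta$ look like the ordered simplex via Lemma~\ref{lem:quasi_standard}) can be evaluated by repeated application of \eqref{eqn:integrate_log_x_over_x}, each integration raising the power of $(\log x_n + R)$ by one and dividing by the next integer.

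The cleanest route, and the one I would actually carry out, is the cone decomposition: integrate first over the scaling parameter $t$ along rays from the origin, then over the facet $\Delta'$. Fixing a ray direction $w \in \Delta'$, the ray meets $\Delta_\tau$ in the segment where $t\,w_n \geq \tau$, i.e. $t \geq \tau/w_n$, and $t \leq 1$; along it $x_n = t w_n$ and $x_1\cdots x_n = t^n\, w_1\cdots w_n$, so the $dt/t^n$-type factor combined with the Jacobian of the cone parametrization produces, after the $t$-integration, an expression of the form $\frac{1}{(k+1)!}(R + \log w_n)^{k+1}$ divided by $w_1\cdots w_{n}$-type data — precisely turning the $n$-dimensional $F^k$-integral into an $(n-1)$-dimensional $F^{k+1}$-integral over $\Delta'$, up to lower-order boundary corrections and the log-of-constants bookkeeping. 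Applying the inductive hypothesis to $\Delta'$ with exponent $k+1$ then yields
\begin{equation*}
V(R) = \frac{R^{(n-1)+(k+1)}}{((n-1)+(k+1))!}
     + \frac{R^{(n-1)+(k+1)-1}}{((n-1)+(k+1)-1)!}\,S'
     + o(R^{n+k-1}),
\end{equation*}
where $S'$ is the sum-of-logs attached to the $(n-1)$-simplex $\Delta'$; since $(n-1)+(k+1) = n+k$, the powers of $R$ match, and the main arithmetic task is to check that $S'$, together with the extra $\log w_n$ collected from the $t$-integration (which integrates against $\Delta'$ to a term of the same order), reassembles into $\sum_{i=1}^n \log v_{i,i} - \sum_{i=1}^{n-1}\log v_{i+1,i}$.

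The main obstacle I anticipate is exactly this last reconciliation of the logarithmic constants: tracking how the vertex coordinates $v_{i,j}$ of $\Delta$ relate to those of the projected facet $\Delta'$ under the cone parametrization, and verifying that the telescoping $\sum \log v_{i,i} - \sum \log v_{i+1,i}$ is stable under the inductive step. A secondary technical point is making the error terms genuinely $o(R^{n+k-1})$ and not merely $O(R^{n+k-1})$: the $j=1$ strip near $\{x_1=0\}$ contributes at order $R^{n+k-1}$, so it must be kept as a genuine contribution rather than discarded, and one needs Lemma~\ref{lem:only_x_axis_counts} (for $j\geq 2$) plus a direct estimate of the $t$-integral's boundary term near $t = 1$ to confine everything else to the error. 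I would organize the write-up so that the truncation-induced discrepancy between $\Delta_\tau$ and the full cone is bounded first, reducing to an integral that Fubini and \eqref{eqn:integrate_log_x_over_x} handle exactly.
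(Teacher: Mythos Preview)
Your overall strategy matches the paper's: induct on $n$, use the cone structure of $\Delta$ from the origin, and apply \eqref{eqn:integrate_log_x_over_x} to integrate out the radial variable, raising $k$ to $k+1$. The base case is handled correctly. However, your choice of cone base---the opposite facet $\Delta' = \conv\{v_1,\dots,v_n\}$---creates a genuine obstruction that is not just log-constant bookkeeping. After the $t$-integration you obtain
\[
V(R) \;=\; \frac{1}{(k+1)!}\int_{\Delta'\cap\{w_n\geq\tau\}} \frac{(\log w_n + R)^{k+1}}{w_1\cdots w_n}\,\dee\mu(w),
\]
and here $w_1$ is an affine, \emph{non-constant} function on the $(n-1)$-dimensional facet $\Delta'$. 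The integrand is therefore not of the form $F^{k+1}$ in any $(n-1)$-dimensional coordinate system satisfying \eqref{eqn:Delta}, so you cannot simply ``apply the inductive hypothesis to $\Delta'$ with exponent $k+1$'' as you claim. The phrase ``divided by $w_1\cdots w_n$-type data'' hides exactly this: one of the $n$ factors is superfluous for an $(n-1)$-dimensional integral, and it does not come out as a constant.

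The paper's fix is to take the cone base to be the coordinate slice $\{x_1=\alpha\}$ for some $0<\alpha<\min_i v_{i,1}$, rather than the opposite facet. On this slice the first coordinate is the constant $\alpha$, so after the change of variables $y_j=x_j\alpha/x_1$ the integral over the region $\{x_1\leq\alpha\}$ becomes exactly $\frac{1}{(k+1)!}\int_{\widehat\Delta'_\tau}F^{k+1}\,\dee\leb_{n-1}$, where $\widehat\Delta'$ (the projection of the slice to the last $n-1$ coordinates) is a simplex of the form \eqref{eqn:Delta} with $\widehat v'_1=0$. The inductive hypothesis then applies cleanly and yields both leading terms, with an $\alpha$-dependent subleading constant. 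The price is that this cone only covers $\{x_1\leq\alpha\}$; the remaining region $\{x_1\geq\alpha\}$ is handled separately, using Lemma~\ref{lem:only_x_axis_counts} (for $j\geq 2$) to localise to a thin neighbourhood of the edge $[0,v_1]$ and then sandwiching between products $[\alpha,v_{1,1}\pm\epsilon]\times\widehat\Delta$. This piece contributes exactly $(\log v_{1,1}-\log\alpha)\,R^{n+k-1}/(n+k-1)! + o(R^{n+k-1})$, and the $\alpha$'s cancel when the two pieces are added. (A minor aside: you call the $j=1$ case of Lemma~\ref{lem:only_x_axis_counts} ``the neighbourhood of the facet $\{x_1=0\}$'', but $\{x_1\geq\beta\}$ is the region \emph{away} from that facet---it is precisely the piece the paper treats second.)
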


\begin{proof}
We use induction on the dimension $n$.
When $n=1$, the integral can be evaluated exactly
using~(\ref{eqn:integrate_log_x_over_x}),
and we get that
\begin{equation*}
V(R) = \frac {(\log v_{1,1} + R)^{k+1}} {(k + 1)!}.
\end{equation*}
So, we see that the statement of the lemma is true in this case.

Assume that $n\geq 2$ and that the statement is true when the dimension
is $n - 1$. Choose $\alpha >0$ such that $\alpha<v_{i,1}$
for all $1\leq i\leq n$.
Decompose the integral into two parts, where $x_1 \geq \alpha$,
and where $x_1 < \alpha$.
Consider the simplices $\Delta'=\Delta\cap\{x_1=\alpha\}$
and $\Delta'_\tau=\Delta_\tau\cap\{x_1=\alpha\}$.
The vertices of $\Delta'$ are
\begin{align*}
v'_1 &:= (\alpha, 0, \dots, 0) \\
     &\,\,\,\,\vdots \\
v'_{n} &:= \Big(\alpha, \alpha \frac {v_{n,2}} {v_{n,1}},
                 \dots, \alpha \frac {v_{n,n}} {v_{n,1}}\Big).
\end{align*}
Let $\widehat\Delta'$ and $\widehat\Delta_\tau'$
be the projections of $\Delta'$ and $\Delta_\tau'$, respectively,
to the last $n-1$ coordinates.

Consider the part of $\Delta_\tau$ with $x_1\leq\alpha$.
This part can be written
\begin{align*}
\Big\{(x_1, x_2, \dots , x_n) \mathrel{}\big|\mathrel{}
         0 \leq x_1 \leq \alpha; \,\,
         \Big(\frac {x_2 \alpha} {x_1}, \dots, \frac {x_n \alpha} {x_1}\Big)
             \in \widehat\Delta'; \,\,
         x_n \geq \tau
     \Big\}.
\end{align*}
We introduce new coordinates $y_j := x_j \alpha / x_1$,
for $j \in \{2, \dots, n\}$.
In the coordinates $x_1, y_2, \dots, y_n$, the above set takes the form
\begin{align*}
\Delta_\tau\cap\{x_1\leq \alpha\}
   = \Big\{
         \frac{\tau\alpha}{y_n}\leq x_1\leq \alpha; \,\,
         y\in \widehat\Delta_\tau'
     \Big\}.
\end{align*}
Thus, we can write the contribution to $V(R)$ of this piece as
\begin{align*}
\frac {1} {k!}
      \int_{\widehat\Delta_\tau'}  & \int_{\tau \alpha / y_n}^\alpha
      \Big( \frac {x_1} {\alpha} \Big)^{n-1}
 \density^k \Big(x_1, \frac {x_1} {\alpha} y_2, \cdots, \frac {x_1} {\alpha} y_n \Big)
      \,\dee x_1 \,\dee\! \leb_{n-1} \\
 &=
\frac {1} {k!}
      \int_{\widehat\Delta_\tau'} \int_{\tau \alpha / y_n}^\alpha
         \frac {(\log y_n + \log x_1 - \log \alpha - \log \tau)^k} {x_1y_2 \cdots y_n}
      \,\dee x_1 \,\dee\! \leb_{n-1}.
\end{align*}
We do the inner integral using~(\ref{eqn:integrate_log_x_over_x}), and then
use the induction hypothesis:
\begin{align*}
& \frac {1} {(k+1)!}
      \int_{\widetilde\Delta_\tau'}
         \frac {(\log y_n + \log x_1 - \log \alpha - \log \tau)^{k+1}}
               {y_2 \cdots y_n} \biggr|_{\tau \alpha / y_n}^\alpha
      \,\dee\! \leb_{n-1} \\
& \qquad =
\frac {1} {(k+1)!}
      \int_{\widetilde\Delta_\tau'}
         \frac {(\log y_n - \log \tau)^{k+1}} {y_2 \cdots y_n}
      \,\dee\! \leb_{n-1} \\
& \qquad = \frac {R^{n+k}} {(n + k)!}
         + \frac {R^{{n+k}-1}} {({n+k}-1)!} \Big(
             \sum_{i, j\geq 2: \, i = j} \log v'_{ij}
         -   \sum_{i, j: \, i = j + 1} \log v'_{ij} \Big)
         + o(R^{{n+k}-1}).
\end{align*}
The sum of the logarithm terms within the parentheses can be re-expressed as
\begin{align*}
\log \Big(\frac {\alpha} {v_{2,1}} \Big)
   + \sum_{i, j\geq 2: \, i = j} \log v_{ij}
   - \sum_{i, j\geq 2: \, i = j + 1} \log v_{ij}.
\end{align*}
Here we have used that $v'_{ij} = \alpha v_{ij} / v_{i1}$,
for all $i$ and $j$ with $i\geq 2$.

Now we turn our attention to the part of $\Delta_\tau$
with $x_1\geq\alpha$.
Let $\widehat\Delta$ be the projection of $\Delta$ onto
the last $n-1$ coordinates. Choose small $\epsilon > 0$.
By Lemma~\ref{lem:only_x_axis_counts}, we may neglect any region outside
of the set 
\begin{align*}
K_\beta := \{x \in \R^n \mid \text{$x_j \leq \beta$, for $2\leq j \leq n-1$}\},
\end{align*}
where $\beta$ is an arbitrary positive number.
 The definition of $\Delta$ in~\eqref{eqn:Delta} implies that,
by taking $\beta$ small enough, we can ensure that
\begin{align*}
P_1 \subset
    K_\beta \intersection \Delta \intersection \{x\in\R^n \mid \alpha \leq x_1\}
  \subset P_2,
\end{align*}
where
\begin{align*}
P_1 &:=
   K_\beta \intersection \big([\alpha, v_{1,1} - \epsilon] \times \widehat\Delta\big)
\qquad\text{and} \\
P_2 &:=
   K_\beta \intersection \big([\alpha, v_{1,1} + \epsilon] \times \widehat\Delta\big).
\end{align*}
Denote by $P^\tau_1$, $P^\tau_2$, and $\widehat\Delta_\tau$ the intersection of
the set $\{x\in\R^n \mid x_n \geq \tau\}$ with, respectively,
$P_1$, $P_2$, and $\widehat\Delta$.

Let $K_\beta' := \{(x_j)_{j=2}^n\in\R^{n-1} \mid \text{$x_j \leq \beta$,
for $2\leq j\leq n-1$}\}$ be the projection of $K_\beta$ onto the last $n-1$
coordinates.

Using the induction hypothesis together with Lemma~\ref{lem:only_x_axis_counts},
\begin{align*}
\frac {1} {k!} \int_{P^\tau_1} \density^k(x) \,\dee\!\leb_n
    &= \Big( \int_{[\alpha, v_{1,1} - \epsilon]}
                 \frac {1} {x_1} \, \dee x_1 \Big)
       \Big( \frac {1} {k!}
          \int_{K_\beta' \intersection \widehat\Delta_\tau} \frac {(\log x_n - \log \tau)^k}
                                {x_2 \cdots x_n} \,\dee\!\leb_{n-1} \Big) \\
    &= \Big(\log(v_{1,1} - \epsilon) - \log \alpha \Big)
       \bigg(\frac {R^{n + k - 1}}
                   {(n + k - 1)!} + O\big(R^{n + k - 2}\big) \bigg).
\end{align*}
When the integral is over $P^\tau_2$ instead of $P^\tau_1$,
a similar equation holds with the sign of $\epsilon$ changed.
Since $\epsilon$ is arbitrarily small, we conclude that
\begin{align*}
\frac {1} {k!} \int_{
K_\beta \intersection \Delta_\tau \intersection \{x\in\R^n \mid \alpha \leq x_1\}
} \density^k(x) \,\dee\!\leb_n
    &= \big(\log v_{1,1} - \log \alpha \big)
       \frac {R^{n + k - 1}} {(n + k - 1)!} + o(R^{n + k - 1}).
\end{align*}
Combining this equation with the previous calculation, we get the result.
\end{proof}

The log terms in the statement of Lemma~\ref{lem:integral_induction}
follow the following pattern, shown here for $n=3$.
The plus and minus signs represent the sign of the log term in the expression;
a dot means that the term
is not present, and $0$ stands for a null coordinate.
\begin{align*}
  \begin{pmatrix}
      0		& 0      & 0	\\
      +		& 0      & 0	\\
      -		& +      & 0	\\
      \cdot	& -      & +	\\
  \end{pmatrix}.
\end{align*}
Note that when the vertices of $\Delta$ are of the following form,
\begin{align*}
  \begin{pmatrix}
      0 & 0      & 0		\\
      a & 0      & 0		\\
      a & b      & 0		\\
      a & b      & c		\\
  \end{pmatrix},
\end{align*}
the proof of the Lemma~\ref{lem:integral_induction} can be adapted
to give an exact formula for $V(R)$:
\begin{align*}
V(R) = \frac {(R + \log c)^{n+k}} {(n+k)!}.
\end{align*}

\subsection{Combining a flag simplex with the dual of a neighbour}

In this section, we take $\Delta$ to be of the same form as in
equation \eqref{eqn:Delta},
except with a single additional positive entry on the upper diagonal.
That is, the entry $v_{i,i+1}$ is now positive,
for some $i\in\{0, \dots, n - 1\}$.
As before, let $\newdelta_\tau$ be the set
$\newdelta \backslash (\R^{n-1} \times [0,\tau))$,
and, for each $R > 0$ and $k\in\N$, write
\begin{equation}
\label{eqn:v_one_def}
V_\Delta(R) := \frac {1} {k!}
        \int_{\newdelta_\tau}
        F^k(x)
        \,\dee\!\leb_n,
\end{equation}
where $\tau = \exp(-R)$. Sometimes we write simply $V(R)$ when the simplex is clear from context.

\begin{lemma}
\label{lem:integral_induction_neighbour}
If the extra positive entry is $v_{0,1}$, then, for all $k\in\N$,

\begin{align}
\label{eqn:two_coefficient_case}
V(R) = \frac {R^{{n+k}-1}} {({n+k}-1)!}
         \cdot \Bigl| \log \frac {v_{0,1}} {v_{1,1}} \Bigr|
         + o(R^{{n+k}-1}).
\end{align}
If, on the other hand, the extra positive entry is $v_{i,i+1}$,
with $i\in\{1, \dots, n - 1\}$, then
\begin{align}
\label{eqn:four_coefficient_case}
V(R) = \frac {R^{{n+k}-1}} {({n+k}-1)!}
         \cdot \Bigl| \log \frac {v_{i,i}} {v_{i, i+1}}
                     \frac {v_{i+1, i+1}} {v_{i+1, i}}         
           \Bigr|
         + o(R^{{n+k}-1}).
\end{align}
\end{lemma}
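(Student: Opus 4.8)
The plan is to mimic the inductive structure used in the proof of Lemma~\ref{lem:integral_induction}, peeling off the first coordinate and reducing the dimension, but now keeping careful track of where the extra off-diagonal entry sits.

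First, consider the case where the extra positive entry is $v_{0,1}$, so that both $v_0$ and $v_1$ have nonzero first coordinate. Write $v_{0,1}$ and $v_{1,1}$ for these, and assume without loss of generality that $v_{0,1} < v_{1,1}$ (the other case is symmetric and accounts for the absolute value). Now split $\newdelta_\tau$ along $x_1 = \alpha$ for a suitable small $\alpha$; the key new feature is that the region $\{x_1 \le \alpha\}$ is now \emph{empty} once $\alpha < v_{0,1}$, since every point of $\newdelta$ has $x_1 \ge v_{0,1} > \alpha$. Hence only the ``outer'' piece $\{x_1 \ge \alpha\}$ contributes. On that piece, I would argue exactly as in the last part of the proof of Lemma~\ref{lem:integral_induction}: by Lemma~\ref{lem:only_x_axis_counts} we may restrict to $K_\beta$, sandwich the domain between $[\,?\,, v_{1,1}\pm\epsilon\,] \times \widehat\Delta$-type products, and integrate the $1/x_1$ factor to produce a logarithm. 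The difference is that the lower endpoint of the $x_1$-range is now $v_{0,1}$ (up to $\epsilon$), not $\alpha$; so the $1/x_1$ integral gives $\log v_{1,1} - \log v_{0,1} = |\log(v_{0,1}/v_{1,1})|$, and the projected $(n-1)$-dimensional integral contributes $R^{n+k-1}/(n+k-1)!$ by the $k=0$... actually by the general-$k$ induction hypothesis of Lemma~\ref{lem:integral_induction} applied to the facet $\widehat\Delta$ (which is an ordered simplex with no extra entry). Since there is no $R^{n+k}$ term coming from an inner piece, we land on~\eqref{eqn:two_coefficient_case}.

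Second, the case where the extra entry is $v_{i,i+1}$ with $i \ge 1$. Here the first $i$ coordinates are still ``ordered'' in the sense of Lemma~\ref{lem:quasi_standard}, so I would peel off coordinates $x_1, \dots, x_{i-1}$ one at a time as in Lemma~\ref{lem:integral_induction}, each time picking up a full $\log v_{j,j}$ contribution and dropping the dimension; this is just iterating the induction. After $i-1$ steps we are reduced to an $(n-i+1)$-dimensional simplex whose extra off-diagonal entry is now in the ``first available'' slot, i.e. it has the shape of equation~\eqref{eqn:Delta} but with a positive entry on position $(1,2)$ of the reduced coordinates — except that here both the first two vertices of the reduced simplex have nonzero first coordinate (coming from $v_{i,i}$ and $v_{i+1,i}$) and the second vertex also has a nonzero second coordinate (from $v_{i,i+1}$ and $v_{i+1,i+1}$). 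I would then handle this reduced $2$-out-of-$n$ situation by one more split along $x_1 = \alpha$: now the $\{x_1 \le \alpha\}$ piece is again empty (since $x_1$ is bounded below by $\min(v_{i,i}, v_{i+1,i})$ on the reduced simplex), and the outer piece, after restricting via Lemma~\ref{lem:only_x_axis_counts} and projecting, gives the logarithm of the ratio of the relevant endpoints. Tracking the endpoints through the coordinate rescalings $v'_{jk} = \alpha v_{jk}/v_{j1}$ used in Lemma~\ref{lem:integral_induction}, the accumulated log contribution assembles into $\bigl|\log\frac{v_{i,i}}{v_{i,i+1}}\cdot\frac{v_{i+1,i+1}}{v_{i+1,i}}\bigr|$, which is~\eqref{eqn:four_coefficient_case}. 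Note that in both cases the leading $R^{n+k}$ term has vanished: this is the geometric reflection of the fact that the extra off-diagonal entry makes the simplex ``thin'' in a direction that kills one order of growth, and bookkeeping this cancellation is what the argument is really doing.

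The main obstacle I anticipate is the combinatorial bookkeeping in the $i \ge 1$ case: one has to be careful that peeling coordinates $x_1, \dots, x_{i-1}$ genuinely reproduces the hypotheses of the inductive step (in particular that the rescaled simplex $\Delta'$ at each stage still has the form required, with the extra entry correctly transported), and that the telescoping of the rescaled vertex coordinates $v'_{jk} = \alpha_k v_{jk}/v_{j,k-1}$ through all the intermediate steps really does collapse to the clean four-term ratio claimed, with all the $\alpha$'s and intermediate diagonal entries cancelling. A secondary subtlety is justifying rigorously that the $\{x_1 \le \alpha\}$ region is empty for small $\alpha$ — this is immediate once one observes that the relevant vertices all have strictly positive first coordinate, but it is the crucial structural point that removes the $R^{n+k}$ term, so it should be stated explicitly rather than glossed over.
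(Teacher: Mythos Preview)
Your treatment of the case $v_{0,1}>0$ is plausible but takes a harder route than the paper. The paper simply writes $\newdelta$ as the set-theoretic difference (up to measure zero) of the two simplices
\[
\tilde\Delta  := \conv\{0, v_0, v_2, \dots, v_n\},\qquad
\tilde\Delta' := \conv\{0, v_1, v_2, \dots, v_n\},
\]
each of which has the form required by Lemma~\ref{lem:integral_induction}. Applying that lemma to both and subtracting, the $R^{n+k}$ terms cancel exactly and the second-order terms combine to give~\eqref{eqn:two_coefficient_case}. This avoids the fiberwise sandwich you sketch, where one must control both endpoints of the $x_1$-interval simultaneously.

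Your Case~2 argument, however, has a genuine gap. After peeling off coordinates as in Lemma~\ref{lem:integral_induction}, the reduced simplex \emph{always retains a vertex at the origin}: the intersection $\Delta\cap\{x_1=\alpha\}$ has vertices $v'_j=(\alpha/v_{j,1})v_j$ for $j\geq 1$, and in particular $v'_1$ projects to $0\in\R^{n-1}$. So the ``inner piece'' $\{x_1\leq\alpha\}$ of the reduced problem is never empty, contrary to your claim, and you cannot dispose of it the way you did in Case~1. Relatedly, the assertion that the first two vertices of the reduced simplex have nonzero first coordinate coming from $v_{i,i}$ and $v_{i+1,i}$ mis-indexes the vertices: after $i-1$ peels the new $w_0$ corresponds to the old $v_{i-1}$, not to $v_i$.

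What actually works is this: peel off \emph{one} coordinate $x_1$, obtaining (exactly as in the inner-piece calculation of Lemma~\ref{lem:integral_induction}) an $(n-1)$-dimensional integral of $F^{k+1}$ over $\widehat\Delta'_\tau$. The key point is that $\widehat\Delta'$ is again a simplex of the form treated in the \emph{present} lemma, with the extra entry now at position $(i-1,i)$. So you apply the induction hypothesis of Lemma~\ref{lem:integral_induction_neighbour} itself, not of Lemma~\ref{lem:integral_induction}; the absence of an $R^{n+k}$ term is built into that hypothesis. This gives a lower bound for $V(R)$ of the form~\eqref{eqn:four_coefficient_case}. For the matching upper bound you cannot use an ``outer piece'' argument directly; instead choose $\beta$ larger than all first coordinates of the vertices, note that $\newdelta\subset\conv(\{0\}\cup\Delta'')$ where $\Delta''$ is the radial projection of $\newdelta$ to $\{x_1=\beta\}$, and repeat the same reduction on this cone. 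The formula obtained is independent of the slicing parameter (the ratios $v'_{j,k}/v'_{j',k'}$ do not see $\alpha$ or $\beta$), so the upper and lower bounds coincide asymptotically.
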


\begin{proof}
Again, we use induction on the dimension $n$.
When $n=1$, we can calculate the integral exactly
using~(\ref{eqn:integrate_log_x_over_x}),
and get that, for sufficiently small $\tau$,
\begin{equation*}
V(R) = \frac {1} {(k + 1)!}
              \Big|(\log v_{0,1} + R)^{k+1} - (\log v_{1,1} + R)^{k+1} \Big|.
\end{equation*}
This implies the statement of the lemma, in the case where $n=1$.

Now, assume that $n\geq 2$ and that the statement is true when the dimension
is $n - 1$. There are two possibilities to consider.

\emph{Case 1, where $v_{0,1} > 0$.}
For simplicity, we assume that $v_{0,1} > v_{1,1}$; the proof is similar
when the inequality is in the other direction.
The simplex $\newdelta$ can be expressed as the difference
(up to a set of measure zero)
of the two simplices
\begin{align*}
\tilde\Delta  &:= \conv\{0, v_0, v_2, \dots, v_n\} \quad \text{and} \\
\tilde\Delta' &:= \conv\{0, v_1, v_2, \dots, v_n\}.
\end{align*}
Both simplices are of the form assumed in Lemma~\ref{lem:integral_induction}.
Applying this lemma to each of them and subtracting the results
gives~(\ref{eqn:two_coefficient_case}).

\emph{Case 2, where $v_{i,i+1} > 0$ for some $i\geq 1$.}
As in the proof of Lemma~\ref{lem:integral_induction},
choose $\alpha >0$ such that $\alpha$ is less than each of the
first coordinates of the non-zero vertices of $\newdelta$.

Let $\Delta'$ be the intersection of the hyperplane $\{x_1 = \alpha\}$ with the
simplex $\newdelta$. Observe that the vertices of $\Delta'$ are exactly the
vertices of $\newdelta$ (apart from the origin) rescaled so as to have
the first coordinate equal to $\alpha$.
Denote these vertices by $v'_i$; $i\in\{1,\dots,n\}$.
Also, let $\Delta'_\tau$ be the intersection of $\{x_1 = \alpha\}$ with the
set $\newdelta_\tau := \newdelta \backslash \{x_n < \tau\}$,
which is not necessarily a simplex.

Look at the part $\newdelta_\alpha^-$ of $\newdelta_\tau$
on the same side of the hyperplane as the origin.
Just as in the proof of Lemma~\ref{lem:integral_induction}, we do a change
of coordinates and perform a one-dimensional integration to get that
the contribution of this part is

\begin{equation*}
\frac{1}{k!}\int_{\Delta_\alpha^-}F^k(x) \,\dee\!\leb_n=\frac {1} {(k+1)!}
      \int_{\Delta'_\tau}
         \frac {(\log x_n - \log \tau)^{k+1}} {x_2 \dots x_n}
      \,\dee\! \leb_{n-1}.
\end{equation*}
Now we use the induction hypothesis. If $v'_{1,2} > 0$, we get that
the contribution is
\begin{equation*}
\frac {R^{{n+k}-1}} {({n+k}-1)!}
         \cdot \Big| \log \frac {v'_{1,2}} {v'_{2,2}} \Big|
         + o(R^{{n+k}-1}).
\end{equation*}
Otherwise, we get
\begin{equation*}
\frac {R^{{n+k}-1}} {({n+k}-1)!}
         \cdot \Big| \log \frac {v'_{i,i}} {v'_{i, i+1}}
                     \frac {v'_{i+1, i+1}} {v'_{i+1, i}}
         \Big|
         + o(R^{{n+k}-1}),
\end{equation*}
where the extra positive entry is $v'_{i,i+1}$.
Recalling that $v'_{i,j} = \alpha v_{i,j} / v_{i,1}$, for each $i$ and $j$,
we get in both cases that the right-hand-side
of~(\ref{eqn:four_coefficient_case}) is a lower bound on $V(R)$.

Now we want to show that the same expression is also an upper bound.
Choose $\beta > 0$ such that $\beta$ is greater than each of the first
coordinates of the non-zero vertices of $\newdelta$.
Let $\Delta''$ be the projection of $\newdelta$ onto the hyperplane
$\{x_1 = \beta\}$ along rays passing through the origin.
In other words, $\Delta''$ is the intersection of that hyperplane
with the cone over $\Delta'$. Observe that the vertices of $\Delta''$
are exactly the points $(\beta / \alpha) v_i$, $i\in\{1, \dots, n\}$.
Another important thing to note is that $\conv(\{0\} \union \Delta'')$
contains $\newdelta$. So, we get an upper bound on $V(R)$ by calculating
the integral in (\ref{eqn:v_one_def}) with $\newdelta_\tau$ replaced by
$\conv(\{0\} \union \Delta'') \intersection \{x_n \geq \tau\}$.
This calculation is formally the same as the one we have just done, with
$\alpha$ replaced by $\beta$, and it has the same answer.
\end{proof}

Let $\Phi_\tau \colon \R^n \to \R^n$ be the map
$\Phi_\tau(x) := (1 - \tau) x + \tau v_n$
that shrinks by a factor $1 - \tau$ towards the point $v_n$.
Denote $\slopedDelta := \Phi_\tau(\Delta)$.
For each $R > 0$ and $k\in\N$, write
\begin{equation*}
\newV_\Delta(R) := \frac {1} {k!}
        \int_{\slopedDelta} \density^k(x)
        \,\dee\!\leb_n,
\end{equation*}
where $\tau = \exp(-R)$.
Again we omit the $\Delta$ when the simplex is clear from context.
Observe that if the $n$th coordinate of $v_n$ is $1$, and the $n$th
coordinate of every other $v_i$ is zero,
then $\tildedelta_\tau = \slopedDelta$ and so
$\tildeV_\Delta(R) = \newV_\Delta(R)$, for all $R>0$.

The case when the extra positive entry is the coordinate $v_{n-1, n}$
presents a special difficulty, because in this case the region we must integrate
over to compute the Funk volume is not $\tildedelta_\tau$ but instead $\slopedDelta$.
We show however that, for our purposes, the difference between the
two integrals is unimportant because it is of order $R^{n-2}$.
We first consider the two-dimensional case.

\begin{figure}
\input{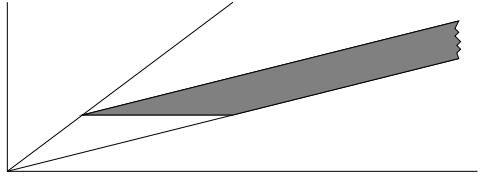}
\caption{The integral of $1/xy$ over the infinite grey region is finite. }
\label{fig:region_A}
\end{figure}

\begin{lemma}
\label{lem:awkward_case_2d}

Let $m_1$, $m_2$, and $\tau$ be positive real numbers, with $m_1 < m_2$.
Define the region
\begin{equation}
A_\tau := \Big\{(x, y) \subset \R^2
        \mid \text{$\tau \leq y$, and $m_1 x \leq y$,
                   and $m_1 (x - \tau/m_2) \geq y - \tau$} \Big\}.
\end{equation}
Then, the integral over $A_\tau$ of $1/xy$ against the Lebesgue measure is finite.
Moreover, it depends on $m_1$ and $m_2$, but not on $\tau$.
\end{lemma}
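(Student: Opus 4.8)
The plan is to rescale so that $m_1 = 1$, reducing to a cleaner picture, and then to exhibit $A_\tau$ as a subset of a region on which $1/(xy)$ is manifestly integrable, with a bound independent of $\tau$. After setting $m_1 = 1$ (which only rescales $x$ and thus changes the integral by no $\tau$-dependent factor — we track the scaling carefully to confirm the final dependence on $m_1, m_2$), the region $A_\tau$ becomes $\{(x,y) : \tau \le y,\ x \le y,\ x - \tau/m_2 \ge y - \tau\}$, i.e. the triangle-like strip bounded below by $y = \tau$, above-left by the line $y = x$, and below-right by the line $y = x + \tau(1 - 1/m_2)$, a line parallel to $y = x$ since $m_1 = 1$. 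So $A_\tau$ is the portion of the slab $x \le y \le x + c\tau$ (with $c := 1 - 1/m_2 > 0$) lying above $y = \tau$.

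First I would integrate in $x$ for fixed $y$: on $A_\tau$, $x$ ranges over $[\max(y - c\tau,\, 0),\ y]$ — but actually, since $x > 0$ is forced (the left boundary $y=x$ with $y \ge \tau > 0$ and the width is only $c\tau$), for $y \ge c\tau$ we have $x \in [y - c\tau, y]$ and for $\tau \le y < c\tau$ we have $x \in (0, y]$, though this latter sliver may be empty if $c \le 1$. In either case,
\begin{equation*}
\int \frac{1}{x}\, dx \le \log\frac{y}{y - c\tau} \quad\text{(or }\le\text{ a bounded contribution from the sliver)},
\end{equation*}
and the whole integral is then bounded by
\begin{equation*}
\int_{\tau}^{\infty} \frac{1}{y} \log\frac{y}{\max(y - c\tau,\,\tau/2)}\, dy .
\end{equation*}
Substituting $y = \tau u$ makes this $\int_{1}^{\infty} \frac{1}{u}\log\frac{u}{\max(u - c, 1/2)}\, du$, which is independent of $\tau$ and finite: near $u = 1$ (or $u = c$) the logarithmic singularity is integrable, and as $u \to \infty$ the integrand is $\sim \frac{1}{u}\log\frac{u}{u-c} \sim c/u^2$, which converges.

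The main obstacle is bookkeeping at the boundary of the region — handling the corner near $(x,y) \approx (\tau/m_2, \tau)$ where the left edge $y = m_1 x$ and the bottom edge $y = \tau$ meet, and making sure the $x$-integral's lower limit is correctly $\max(y - c\tau, \text{something positive})$ rather than ever reaching $0$. One clean way to sidestep this is to note $A_\tau$ is contained in the \emph{unbounded} region $\{(x,y) : x > 0,\ \tau \le y,\ x \le y \le x + c\tau\}$, integrate $1/(xy)$ over that larger region (which is exactly the grey region of Figure~\ref{fig:region_A}), and observe the bound above applies verbatim with $\max(y-c\tau, \tau/2)$ replaced by the exact expression, still $\tau$-independent after rescaling. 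Finally, to pin down that the integral depends on $m_1$ and $m_2$ but not $\tau$: the rescaling $x \mapsto m_1 x$ sending the general region to the $m_1 = 1$ case leaves $dx/x$ invariant, so the integral equals the $m_1=1$ value evaluated at the single parameter $c = 1 - m_1/m_2$ — hence a function of $m_1, m_2$ alone.
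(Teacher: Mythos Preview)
Your approach is correct in outline but differs from the paper's, and your write-up contains an unnecessary complication and a small logical gap.

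First, the complication: since $c = 1 - m_1/m_2 \in (0,1)$, on $A_\tau$ (after normalizing $m_1=1$) we always have $y - c\tau \geq (1-c)\tau > 0$. So the ``sliver'' $\tau \le y < c\tau$ is empty, and the $\max(y-c\tau,\tau/2)$ is never needed. The $x$-integral is exactly $\log\frac{y}{y-c\tau}$ for every $y\ge\tau$, and your ``larger region'' $\{x>0,\ \tau\le y,\ x\le y\le x+c\tau\}$ is in fact equal to $A_\tau$, not a proper superset. This simplifies the argument considerably.

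Second, the gap: a $\tau$-independent \emph{bound} does not by itself show the integral is $\tau$-independent. You need to apply your substitution $y = \tau u$ to the exact integral $\int_\tau^\infty \frac{1}{y}\log\frac{y}{y-c\tau}\,dy$, not to the bound. Once you do that, you get $\int_1^\infty \frac{1}{u}\log\frac{u}{u-c}\,du$, which is manifestly finite (no singularity at $u=1$ since $c<1$; integrand $\sim c/u^2$ at infinity) and $\tau$-free. This is actually the cleanest version of your argument.

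For comparison, the paper takes a shorter route: it bounds $1/y \le 1/(m_1 x)$ on $A_\tau$ and integrates over the containing rectangle-strip $\{\tau/m_2 \le x,\ m_1 x \le y \le m_1 x + h\}$ to get the explicit bound $m_2/m_1 - 1$; then it observes that the diagonal scaling $(x,y)\mapsto(\alpha x,\alpha y)$ preserves both $\frac{1}{xy}\,dx\,dy$ and the family $\{A_\tau\}$, giving $\tau$-independence directly. Your method has the advantage of yielding the exact value of the integral as a one-variable expression; the paper's is quicker and gives an explicit numerical bound.
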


The region $A_\tau$ is illustrated in Figure~\ref{fig:region_A}.

\begin{proof}
Observe that the vertical distance between the two parallel lines bounding
$A_\tau$ is $h := \tau (1 - m_1 / m_2)$.
Since $y \geq m_1 x$ for $(x, y) \in A_\tau$, and the integrand is decreasing in $y$,
we have the following bound:
\begin{align*}
\int_{A_\tau} \frac {1} {xy} \, \dee y \, \dee x
    \leq \int_{\tau /m_2}^\infty \int_{m_1 x}^{m_1 x + h}
            \frac {1} {m_1 x^2} \, \dee y \, \dee x
    = \frac {-h} {m_1 x} \Big|_{\tau/m_2}^\infty
    = \frac {m_2} {m_1} - 1.
\end{align*}

That the integral is independent of $\tau$ follows from the invariance
of the integrand under the map $(x, y) \mapsto (\alpha x, \alpha y)$ mapping $A_\tau$ to $A_{\tau/\alpha}$, for any $\alpha > 0$.
\end{proof}

We now consider the $n$-dimensional case.
Note that $\slopedDelta \subset \tildedelta_\tau$ for all $\tau$.

\begin{lemma}
\label{lem:awkward_case}
Assume that $\Delta$ is of the form (\ref{eqn:Delta}),
except that the entry $v_{n-1, n}$ is positive.
Also, assume that $v_n = (1, \dots, 1)$. Then,
\begin{align*}
\diffV_\Delta(R)
     := \int_{\tildedelta_\tau \backslash \slopedDelta}
        \frac {\dee\!\leb_n} {x_1 \cdots x_n}
     = O(R^{n-2}).
\end{align*}
\end{lemma}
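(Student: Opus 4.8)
The plan is to reduce the $n$-dimensional estimate to the two-dimensional computation of Lemma~\ref{lem:awkward_case_2d}, by slicing $\tildedelta_\tau \backslash \slopedDelta$ along the coordinate directions that are ``standard'' (those with a single diagonal entry) and leaving only the two coordinates $x_{n-1}$ and $x_n$ to carry the awkward geometry. Concretely, I first observe that $\slopedDelta = \Phi_\tau(\Delta)$ and $\tildedelta_\tau = \Delta \backslash \{x_n < \tau\}$ agree outside a thin slab: since $v_n = (1,\dots,1)$, the map $\Phi_\tau$ fixes $v_n$ and contracts everything else, so $\tildedelta_\tau \backslash \slopedDelta$ lies in the region where $x_n$ is close to $\tau$ and, in the $(x_{n-1}, x_n)$-plane, between the facet of $\Delta$ through $v_{n-1}$ and $v_n$ and its $\Phi_\tau$-image (a parallel hyperplane at vertical distance $O(\tau)$), cut off below by $x_n \geq \tau$. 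This is exactly the $n$-dimensional analogue of the region $A_\tau$, fibered over the remaining coordinates.

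Next I would set up the Fubini decomposition. Use Lemma~\ref{lem:quasi_standard} to fit $\Delta$ into an ordered simplex after rescaling, so all coordinates are comparable and bounded; this costs only harmless constants. For fixed values of $x_1, \dots, x_{n-2}$ in a bounded box, the cross-section of $\tildedelta_\tau \backslash \slopedDelta$ in the $(x_{n-1}, x_n)$-plane is contained in a region of the type $A_\tau$ of Lemma~\ref{lem:awkward_case_2d} (with slopes $m_1, m_2$ determined by the entries $v_{n-1,n-1}$, $v_{n-1,n}$, $v_{n,n-1}$, $v_{n,n}$, independent of $\tau$), because the bounding hyperplanes of $\Delta$ and of $\Phi_\tau(\Delta)$ restrict to a pair of parallel lines of the required form. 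Hence the inner integral
\begin{equation*}
\int \frac{\dee x_{n-1}\, \dee x_n}{x_{n-1} x_n}
\end{equation*}
over that cross-section is bounded by a constant independent of $\tau$, by Lemma~\ref{lem:awkward_case_2d}. Integrating the remaining factor $1/(x_1 \cdots x_{n-2})$ over the box $x_j \in [\tau, \gamma]$ (using~(\ref{eqn:integrate_log_x_over_x}) repeatedly, or just $\int_\tau^\gamma \dee x_j / x_j = R + O(1)$) contributes $O(R^{n-2})$, which gives the claimed bound.

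The main obstacle is verifying cleanly that the $(x_{n-1},x_n)$-cross-section really does sit inside a region of the $A_\tau$ shape uniformly in the other coordinates: the bounding hyperplanes of $\Delta$ depend on all coordinates, and one must check that after fixing $x_1, \dots, x_{n-2}$ the two relevant facets (the one spanned by the ``old'' vertices and the one spanned by vertices involving $v_n$) trace out parallel lines whose separation is $O(\tau)$ and whose slopes stay in a fixed compact range. This is where the hypothesis $v_n = (1,\dots,1)$ and the specific position of the extra entry $v_{n-1,n}$ are used; once it is in place, the rest is a routine application of the two previous lemmas. One should also note at the outset, as the statement does, that $\slopedDelta \subset \tildedelta_\tau$, so the set difference is genuinely what one integrates over and the integrand is positive, making all the bounds one-sided and hence legitimate.
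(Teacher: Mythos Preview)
Your proposal is correct and follows essentially the same strategy as the paper: separate the $(x_{n-1},x_n)$-coordinates from the rest, bound the two-dimensional piece by Lemma~\ref{lem:awkward_case_2d}, and bound the integral over the remaining $n-2$ coordinates by $O(R^{n-2})$ using Lemma~\ref{lem:quasi_standard}. The paper resolves your ``main obstacle'' very cleanly: since the facet of $\Delta$ opposite $v_n$ contains $v_0,\dots,v_{n-2}$, its defining linear functional is $x_n/v_{n-1,n}-x_{n-1}/v_{n-1,n-1}$, which involves only the last two coordinates; hence both this facet and its $\Phi_\tau$-image give inequalities on $(x_{n-1},x_n)$ alone, independent of $x_1,\dots,x_{n-2}$, and the paper then uses a single product bound $\Delta'_\tau\times W$ rather than slicing.
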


\begin{proof}
We treat the case where $v_{n-1, n-1} \geq v_{n-1, n}$.
The other case can be handled similarly.

Let $x\in \tildedelta_\tau \backslash \slopedDelta$.
Since $x\in \tildedelta_\tau$ we have

\begin{align*}
x_n &\geq \tau \\
\text{and}\qquad
\frac {x_n} {v_{n-1, n}} &\geq \frac {x_{n-1}} {v_{n-1, n-1}}.
\end{align*}

Since $x\in \Delta \setminus \slopedDelta$, we have
\begin{equation*}
\frac {x_n - \tau} {v_{n-1, n}} \leq \frac {x_{n-1} - \tau} {v_{n-1, n-1}}.
\end{equation*}
Let $W\subset\R^2$ be the set of pairs $(x_{n-1}, x_n)$ such that
these three inequalities hold.

Project $\Delta$ to the coordinate hyperplane $x_{n-1}=0$
to obtain the simplex $\overline\Delta$.
We may then apply Lemma~\ref{lem:quasi_standard} to $\overline\Delta$
and find positive constants $a_1,\dots, a_{n-2}, a_n$
such that, for all $x\in \Delta$, we have
\begin{equation*}
a_nx_n\leq a_{n-2}x_{n-2}\leq\dots\leq a_1x_1\leq 1.
\end{equation*}
In particular, for all $x\in\Delta_\tau$, we have
\begin{equation*}
\tau \leq \frac {a_{n-2}} {a_n} x_{n-2}
     \leq \frac {a_{n-3}} {a_n} x_{n-3}
     \leq \dots
     \leq \frac {a_1 x_1} {a_n}
     \leq \frac {1} {a_n}.
\end{equation*}
Let $\Delta'_\tau \subset \R^{n-2}$ be set of points
$(x_1,\dots,x_{n-2})$
satisfying this inequality. Observe that this set is a simplex.

Since $\tildedelta_\tau \backslash \slopedDelta$ is a subset of
$\Delta'_\tau \times W$, we have
\begin{equation*}
\diffV_\Delta(R) \leq
    \int_{\Delta'_\tau} \frac {\dee\!\leb_{n-2}} {x_1 \cdots x_{n-2}}
    \cdot\int_{W} \frac {\dee\!\leb_{2}} {x_{n-1} x_n}.
\end{equation*}
The first factor grows like $O(R^{n-2})$
according to Lemma~\ref{lem:integral_induction},
and the second is bounded, by Lemma~\ref{lem:awkward_case_2d}.
\end{proof}

\subsection{Combining a flag simplex with the dual of a non-neighbour}

Let the $n$-dimensional vectors $v_i$; $i\in\{0, \dots, n\}$ be such that
$v_{ij} > 0$, for all $j \leq i$.
Here $j$ indexes the coordinates and ranges from $1$ to $n$.
Assume that $\newnewdelta := \conv\{v_0, \dots, v_n\}$
is a non-degenerate $n$-dimensional simplex.

On $\R^n$ we use $\leq$ to denote the coordinate-wise partial order,
that is, $x \leq y$ if $x_j \leq y_j$ for all $j \in \{1, \dots, n\}$.
If $A\subset \mathbb R^n$, we write $|A|$ for its Lebesgue volume.

\begin{lemma}
\label{lem:make_smaller}
Let $\Delta_1$ and $\Delta_2$ be non-degenerate simplices in $\Rplus^n$
with vertices $u_i$; $i \in \{0, \dots, n\}$ and
$v_i$; $i \in \{0, \dots, n\}$ respectively.
Let $R>0$, and take $k=0$.
If $u_i \leq v_i$, for all $i\in\{0,\dots,n\}$, then
\begin{align*}
\frac {\newV_{\Delta_1}(R)} {|\Delta_1|}
    \geq \frac {\newV_{\Delta_2}(R)} {|\Delta_2|}.
\end{align*}
\end{lemma}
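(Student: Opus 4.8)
The plan is to reduce the comparison of the two simplices to a one-parameter deformation, and along this deformation show that the ratio $\newV_\Delta(R)/|\Delta|$ is monotone. First I would observe that it suffices to treat the case where $\Delta_1$ and $\Delta_2$ differ in only a single vertex, say $u_i \le v_i$ with $u_j = v_j$ for $j \ne i$: a general pair with $u_i \le v_i$ for all $i$ can be joined by a chain of $n+1$ such elementary moves (replace one vertex at a time), and inequalities compose. So fix an index $i$ and a vertex that moves from $u_i$ to $v_i$, with $u_i \le v_i$ coordinatewise.

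The key point is a \emph{scaling/monotonicity} observation about the integrand. Recall $\newV_\Delta(R) = \frac{1}{k!}\int_{\slopedDelta} \density^k(x)\,\dee\!\leb_n$ with $k=0$, so $\density^0(x) = 1/(x_1\cdots x_n)$, and $\slopedDelta = \Phi_\tau(\Delta)$ is the shrink of $\Delta$ by $1-\tau$ toward $v_n$. The crucial features of $f(x) = 1/(x_1\cdots x_n)$ on $\Rplus^n$ are that it is positive and that it is \emph{homogeneous of degree $-n$}; equivalently, $f(x)\,\dee\!\leb_n$ is scale-invariant in the strong sense that for any $\alpha>0$, $\int_{\alpha A} f\,\dee\!\leb_n = \int_A f\,\dee\!\leb_n$. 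I would use this together with the following elementary fact: if $u_i \le v_i$ coordinatewise, then the convex body $\Delta_1 = \conv\{v_0,\dots,u_i,\dots,v_n\}$ can be obtained from $\Delta_2$ by "pushing the $i$-th vertex toward the origin along a path of coordinatewise-decreasing points," and each infinitesimal such push removes from $\slopedDelta[\Delta_2]$ a thin sliver near the moving vertex on which $f$ is \emph{smaller}, per unit Euclidean volume, than the average value of $f$ over $\slopedDelta[\Delta_2]$ — because $f$ decreases as coordinates grow and the sliver sits at the "outer" (larger-coordinate) end. Hence removing it lowers the total integral by \emph{less} than proportionally to the volume removed, which is exactly the statement that $\newV_\Delta(R)/|\Delta|$ does not decrease. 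Making this precise: parametrize $w(t) = (1-t)u_i + t v_i$, $t\in[0,1]$, and consider $g(t) := \newV_{\Delta(t)}(R)$ and $h(t) := |\Delta(t)|$ where $\Delta(t) = \conv\{v_0,\dots,w(t),\dots,v_n\}$; compute $g'(t)$ and $h'(t)$ as integrals over the facet of $\Delta(t)$ opposite $w(t)$, scaled appropriately, and show $g'(t)/g(t) \le h'(t)/h(t)$, i.e.\ $(\log(g/h))' \le 0$, using the coordinatewise monotonicity of $f$ and that $f$ weighted over the simplex is dominated near the moving vertex.

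I expect the main obstacle to be the bookkeeping around $\slopedDelta$ rather than $\Delta$ itself: because $\newV$ integrates over the \emph{sheared/truncated} region $\slopedDelta = \Phi_\tau(\Delta)$, moving a vertex of $\Delta$ does not simply translate into moving a vertex of the integration domain in a clean way unless the moving vertex is $v_n$ (the shrink center) or the truncation in the $x_n$-direction interacts mildly with it. The cleanest route, which I would pursue, is to avoid differentiating in $t$ and instead give a direct comparison: exhibit an explicit volume-preserving-up-to-the-right-factor map, or a dominated pointwise estimate, showing $\int_{\slopedDelta[\Delta_1]} f \,\big/\, |\Delta_1| \ge \int_{\slopedDelta[\Delta_2]} f\,\big/\,|\Delta_2|$ by comparing each to the "normalized" integral $\frac{1}{|\Delta|}\int_{\Delta} f$-type quantity and invoking the scale-invariance of $f\,\dee\!\leb_n$ to absorb the $(1-\tau)$ and $\tau v_n$ in $\Phi_\tau$. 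Concretely, since $\slopedDelta[\Delta_1] \subset \slopedDelta[\Delta_2]$ when $\Delta_1 \subset \Delta_2$ — which holds if, in addition, the moved vertex stays inside, a reduction one can arrange — and since on the complementary region $f$ is below its $\slopedDelta[\Delta_2]$-average, the claim follows from the general fact that removing a below-average set from a domain raises the average. The remaining work is to verify that the $\Phi_\tau$-images nest correctly and that the "below-average" claim is legitimate, which is where the positivity and coordinatewise-decrease of $1/(x_1\cdots x_n)$ do the real work.
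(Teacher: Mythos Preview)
Your proposal has a genuine gap, and it misses the key idea that makes the paper's proof a two-line argument.

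The paper's approach: take the unique affine map $T$ with $T u_i = v_i$ for all $i$. Any $x\in\Delta_1$ is a convex combination $x=\sum\lambda_i u_i$, and then $Tx=\sum\lambda_i v_i$; since $u_i\le v_i$ coordinatewise and convex combinations preserve $\le$, we get $Tx\ge x$ for every $x\in\Delta_1$. Because $\Phi_\tau$ is an affine combination with weights summing to $1$, $T$ intertwines the two $\Phi_\tau$-maps and hence carries $\tilde\Delta_1^\tau$ onto $\tilde\Delta_2^\tau$. Now $F^0(x)=1/(x_1\cdots x_n)$ is coordinatewise decreasing, so $F^0(Tx)\le F^0(x)$, and a single change of variables gives
\[
\frac{|\Delta_1|}{|\Delta_2|}\int_{\tilde\Delta_2^\tau}F^0
=\int_{\tilde\Delta_1^\tau}F^0(Tx)\,\dee\!\leb_n
\le \int_{\tilde\Delta_1^\tau}F^0\,\dee\!\leb_n.
\]
No one-vertex reduction, no differentiation, no nesting is needed.

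In your approach the central step does not hold. The ``sliver'' near the moved vertex $v_i$ is \emph{not} guaranteed to carry below-average values of $F^0$: the vertex $v_i$ may have some coordinates much smaller than those of other vertices of $\Delta_2$, so $F^0$ can be large on that sliver. Your fallback to a nesting argument also fails: $u_i\le v_i$ coordinatewise does not imply $u_i\in\Delta_2$, hence not $\Delta_1\subset\Delta_2$, and the promised ``reduction one can arrange'' is not supplied. The map you say you would like to ``exhibit explicitly'' is precisely the affine $T$ above; once you write it down, the monotonicity of $F^0$ finishes the proof immediately.
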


\begin{proof}
There exists an affine transformation $T$ taking $u_i$ to $v_i$
for all $i$. Each $x\in\Delta_1$ can be expressed as a convex combination
of the vertices $\{u_i\}$,
and so $Tx$ is a convex combination of the $\{v_i\}$ with the
same coefficients. We conclude that $Tx \geq x$, for all $x\in\Delta_1$.
Noting that $\density^0$ is monotone decreasing and that
$T$ takes $\tilde \Delta_1^\tau := \Phi_\tau(\Delta_1)$
to $\tilde \Delta_2^\tau := \Phi_\tau(\Delta_2)$, we get
\begin{align*}
\frac {|\tilde \Delta_1^\tau|} {|\tilde \Delta_2^\tau|}
\int_{\tilde \Delta_2^\tau} \density^0(x) \,\dee\!\leb_n
    &= \int_{\tilde \Delta_1^\tau} \density^0(Tx) \,\dee\!\leb_n
    \leq \int_{\tilde \Delta_1^\tau} \density^0(x) \,\dee\!\leb_n.
\end{align*}
It just remains to observe that $\Phi^\tau$ shrinks Lebesgue volume
by the constant factor $(1-\tau)^n$.
\end{proof}

\begin{lemma}
\label{lem:integral_induction_non_neighbour}
Let $\Delta$ be a non-degenerate simplex in $\Rplus^n$
with vertices $u_i$, $i \in \{0, \dots, n\}$, such that
$u_{i,j} >0$ when $i \geq j$.
Assume that there are two distinct vertices $v_{p-1}$
and $v_{q-1}$ of $\Delta$, with $p,q\in\{1,\dots,n\}$,
such that $v_{p-1,p}$ and $v_{q-1,q}$ are positive.
Take $k=0$. Then,
\begin{align*}
\newV_\Delta(R) = o(R^{n-1}).
\end{align*}
\end{lemma}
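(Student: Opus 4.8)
The plan is to reduce the statement, via the monotonicity lemma just proved (Lemma~\ref{lem:make_smaller}), to an estimate on a single convenient model simplex, and then to compute on that model by iterating the one-dimensional integral \eqref{eqn:integrate_log_x_over_x}. First I would normalize: after rescaling each coordinate (as in Lemma~\ref{lem:quasi_standard}, but here I want an \emph{upper} comparison), I replace $\Delta$ by a larger simplex $\Delta'$ whose vertices dominate those of $\Delta$ coordinatewise and which has a simple ``staircase with two extra diagonal entries'' shape, say with the two off-diagonal entries $v_{p-1,p}>0$ and $v_{q-1,q}>0$ and all coordinates below the diagonal nonnegative. By Lemma~\ref{lem:make_smaller}, $\newV_\Delta(R)/|\Delta| \geq \newV_{\Delta'}(R)/|\Delta'|$, so it suffices to show $\newV_{\Delta'}(R) = o(R^{n-1})$, i.e. to gain an \emph{extra} lost power of $R$ compared to the single-off-diagonal case treated in Lemma~\ref{lem:integral_induction_neighbour}.

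The key mechanism is this: each time the domain of integration in a coordinate $x_j$ is bounded \emph{away from zero} (i.e.\ $x_j \in [\alpha,\gamma]$ rather than $x_j \in [\delta\tau,\gamma]$), integrating $1/x_j$ contributes only a constant factor $\log(\gamma/\alpha)$ instead of a factor of order $R$; this is exactly the bookkeeping already used in Lemma~\ref{lem:only_x_axis_counts} and Lemma~\ref{lem:integral_induction_neighbour}. A single extra off-diagonal entry $v_{i,i+1}$ effectively replaces what would be a difference of two $R^{n+k}$-terms by a single $R^{n+k-1}$-term (the telescoping/subtraction in Case~1 of Lemma~\ref{lem:integral_induction_neighbour}, or the induction on the slice in Case~2). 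Two independent off-diagonal entries should cause this loss to happen twice. Concretely I would run the induction on $n$ exactly as in Lemma~\ref{lem:integral_induction_neighbour}: slice at $x_1=\alpha$, split $\Delta$ into the part near the origin and the part with $x_1\geq\alpha$. On the near-origin part, the change of coordinates $y_j = x_j\alpha/x_1$ and the one-dimensional integration in $x_1$ reduce matters to a slice simplex $\Delta'$ of dimension $n-1$; if \emph{both} surviving off-diagonal entries persist in the slice, the inductive hypothesis (Lemma~\ref{lem:integral_induction_non_neighbour} in dimension $n-1$) gives $o(R^{n-2})$ there, and the extra $x_1$-integration over a \emph{bounded} interval adds only a constant, so the contribution is $o(R^{n-1})$. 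If the slicing destroys one of the off-diagonal entries (when $p=1$ or $q=1$, i.e.\ the vertex $v_0$ carries the entry $v_{0,1}$), then after the subtraction trick of Case~1 one is left with a genuinely $(n-1)$-dimensional single-off-diagonal integral, whose $R^{n+k-1}$-type growth in dimension $n-1$ is $R^{(n-1)+k-1}=R^{n+k-2}=o(R^{n+k-1})$ as required — here I must be careful to track $k$, since the inductive statement is used with $k=0$ but, as the authors note, the induction naturally passes through larger $k$ via \eqref{eqn:integrate_log_x_over_x}. For the part with $x_1\geq\alpha$, Lemma~\ref{lem:only_x_axis_counts} already shows that, away from a neighborhood of the relevant coordinate axes, everything is $O(R^{n+k-j})$ with $j\geq 2$, hence $o(R^{n+k-1})$; the remaining sliver near the axis is handled by the same $P_1\subset\cdot\subset P_2$ squeeze as in Lemma~\ref{lem:integral_induction}, and since the two off-diagonal entries cannot both be ``used up'' by the single axis direction, one again loses two powers.

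The main obstacle I anticipate is the careful case analysis of \emph{where} the two distinguished vertices $v_{p-1},v_{q-1}$ sit relative to the slicing hyperplane $\{x_1=\alpha\}$, and in particular the awkward possibility that one of the off-diagonal entries is $v_{n-1,n}$, so that (as in Lemma~\ref{lem:awkward_case_2d} and Lemma~\ref{lem:awkward_case}) the correct integration region is $\slopedDelta = \Phi_\tau(\Delta)$ rather than $\tildedelta_\tau$, and one must separately argue that the discrepancy $\diffV_\Delta(R) = O(R^{n-2})$ is negligible — but Lemma~\ref{lem:awkward_case} is stated precisely for this, and its factorization through a lower-dimensional $\Delta'_\tau$ combined with the bounded two-dimensional region $W$ plugs in directly. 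A secondary technical point is ensuring that the coordinatewise domination used to pass to the model simplex is compatible with keeping both off-diagonal entries strictly positive; this is arranged by choosing the model's diagonal and off-diagonal entries large enough, which is harmless since we only need an upper bound on $\newV_\Delta(R)/|\Delta|$. Modulo this bookkeeping, the proof is a routine, if somewhat lengthy, induction paralleling Lemma~\ref{lem:integral_induction_neighbour}.
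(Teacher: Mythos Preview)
Your reduction via Lemma~\ref{lem:make_smaller} goes in the wrong direction. You propose to replace $\Delta$ by a simplex $\Delta'$ whose vertices \emph{dominate} those of $\Delta$ coordinatewise; you then correctly write $\newV_\Delta(R)/|\Delta| \geq \newV_{\Delta'}(R)/|\Delta'|$, but this is a \emph{lower} bound on $\newV_\Delta$, and showing $\newV_{\Delta'}(R)=o(R^{n-1})$ tells you nothing about $\newV_\Delta$. Since $F^0(x)=1/(x_1\cdots x_n)$ is decreasing, to bound $\newV_\Delta$ from above you must pass to a simplex with \emph{smaller} vertices. The paper does exactly this: with $\epsilon$ the smallest strictly positive coordinate appearing among the $v_{ij}$, it replaces each lower-triangular entry by $\epsilon$, the two distinguished super-diagonal entries $v_{p-1,p},\,v_{q-1,q}$ by $\epsilon/2$, and all other entries by $0$. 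Every new vertex is coordinatewise $\leq$ the old one, so Lemma~\ref{lem:make_smaller} now gives the desired upper bound; one checks this model simplex is non-degenerate (its determinant is $\epsilon^n/4$).

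Beyond the direction issue, the paper's computation on the model is much shorter than the induction you sketch. After rescaling so that all entries lie in $\{0,\tfrac12,1\}$ (which leaves $\newV$ unchanged and makes $v_{nn}=1$, hence $\slopedDelta\subset\tildedelta_\tau$ and $\newV_\Delta\leq\tildeV_\Delta$), assume $q>p$ and introduce the vertex $v'$ obtained from $v_q$ by zeroing its $q$th coordinate. Then $\Delta$ is, up to measure zero, the difference of
\[
\Delta_1=\conv\{v_0,\dots,v_{q-2},v',v_{q-1},v_{q+1},\dots,v_n\}
\quad\text{and}\quad
\Delta_2=\conv\{v_0,\dots,v_{q-2},v',v_q,v_{q+1},\dots,v_n\},
\]
each of which has exactly \emph{one} super-diagonal positive entry (namely $v_{p-1,p}$). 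Applying Lemma~\ref{lem:integral_induction_neighbour} to both, the $R^{n-1}$ coefficients are identical (they depend only on the four entries at positions $(p-1,p-1),(p-1,p),(p,p-1),(p,p)$, which agree for $\Delta_1$ and $\Delta_2$), so they cancel upon subtraction and $\tildeV_\Delta(R)=o(R^{n-1})$. No new induction, and no separate treatment of the $v_{n-1,n}$ case, is needed.
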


\begin{proof}
Let $\epsilon := \min\{v_{ij}\mid v_{ij}>0\}$ be the smallest strictly-positive coordinate
amongst the vertices of $\Delta$.
By Lemma~\ref{lem:make_smaller}, it is enough to show that the result is true
for the simplex obtained from $\Delta$ by replacing the lower triangular
coordinates, that is $v_{ij}$ with $i\geq j$, by $\epsilon$, the two diagonal coordinates
$v_{p-1,p}$ and $v_{q-1,q}$ by $\epsilon/2$, and all the other coordinates
by zero.
We verify that this simplex is non-degenerate by subtracting $v_0$ from
each of the other vectors, and calculating the determinant of the resulting
matrix, that is, $\det (v_{ij} - v_{0j})_{1\leq i,j \leq n}$.
This determinant equals $\epsilon^n/4$,
as can be easily verified by separately considering the cases
when $0\in\{q-1,p-1\}$ and $0\notin\{q-1,p-1\}$.

Thus we assume that $\Delta$ is of this new form.
Now scale $\Delta$ so that all coordinates of all vertices lie in $\{0,1/2, 1\}$. 
Notice that $\newV_\Delta$ does not change as the integrand is invariant
under diagonal linear maps.

Since we now have $v_{nn} = 1$,
the inclusion $\slopedDelta \subset \tildedelta_\tau$ holds,
for each $\tau\in(0,1)$, and so
\begin{align}
\label{eqn:sloped_is_smaller_than_flat}
\newV_\Delta(R) \leq \tildeV_\Delta(R),
\qquad\text{for all $R>0$}.
\end{align}

By switching labels if necessary, we can assume that $q>p$.
Let $v'$ be the vector having the same coordinates as $v_q$,
except for the $q$th coordinate, which we set equal to $0$.
So,
\begin{align*}
v'      &= (1, \dots, 1, 0,   0, \dots, 0) \\
v_{q-1} &= (1, \dots, 1, \nicefrac{1}{2}, 0, \dots, 0) \\
v_q     &= (1, \dots, 1, 1,   0, \dots, 0).
\end{align*}

Observe that the simplex $\Delta$ can be expressed as the difference
(up to a set of measure zero) of the two simplices
\begin{align*}
\Delta_1 &:= \conv\{v_0, \dots, v_{q-2}, v', v_{q-1}, v_{q+1}, \dots, v_n\}
\quad \text{and} \\
\Delta_2 &:= \conv\{v_0, \dots, v_{q-2}, v', v_q, v_{q+1}, \dots, v_n\}.
\end{align*}
Here $\Delta_1$ is the simplex obtained from $\Delta$ by replacing the
vertex $v_q$ with the vertex $v'$,
and $\Delta_2$ is obtained similarly by replacing $v_{q-1}$ with $v'$.
Both of these simplices are of the form assumed in
Lemma~\ref{lem:integral_induction_neighbour}.
We apply this lemma to each of them, with $k=0$, and see that the term
of order $n-1$ is the same in each case.
Subtracting, we get that $\tildeV_\Delta(R) = o(R^{n-1})$.
That the same holds for $\newV_\Delta(R)$ follows
from~(\ref{eqn:sloped_is_smaller_than_flat}).
\end{proof}

\section{The contribution of a flag simplex}
\label{sec:volume_contribution}

Recall that a flag decomposition of a polytope involves,
for each face $F$ of the polytope, a choice of point $p(F)$ in the relative
interior of $F$. The polytope $P$ is itself a face, and we choose $p(P)$
to be the origin, which we have assumed to be in the interior of $P$.
We also take a flag decomposition of the dual polytope,
and so for each face $F$ we also have a dual point $q(F)$
in the relative interior of the dual face of $F$.
To simplify notation, we write
\begin{align*}
L(F, G) := \log \Big(1 - \bigdotprod {q(F)} {p(G)}\Big),
\end{align*}
for faces $F$ and $G$ of the polytope $P$.
Observe that $L(F, G)$ is finite as long as $G$ is not a subset of $F$.
For a face $F$ of $P$, by $F^\circ$ we mean the face
of $\polytope^\circ$ that is dual to $F$.

We will make use of the following basic fact from linear algebra:
the space of linear anti-symmetric $2$-forms on $V$,
which can be thought of as the space  $(\land^2V)^*$ of linear functionals
on the exterior square of $V$, is naturally isomorphic to the exterior square
$\land^2 (V^*)$ of the dual.
The corresponding pairing $\land^2 V^*\times \land^2V\to \R$  is given on simple $2$-vectors by
\begin{equation*}
\langle v_1\land v_2, u_1\land u_2\rangle
    =\langle v_1, u_1\rangle \langle v_2, u_2\rangle
         - \langle v_1, u_2\rangle \langle v_2, u_1\rangle.
\end{equation*}

\begin{lemma}\label{lem:wedges_positive}
Fix $1\leq i\leq n-2$. Let $f$ be a flag of $P$, and $g:=\flip_i f$.
Choose points $p(f_j)$ in the relative interior of $f_j$,
and $q(g_j)$ in the relative interior of $g_j^\circ$,
for $j \in \{i-1,i,i+1\}$.
Denote
\begin{align*}
u_j &:= p(f_j)-p(f_{j-1}),
\qquad \text{for $j\in\{i,i+1\}$}; \\
v'_j &:= q(g_j)-q(g_{j+1}),
\qquad \text{for $j\in\{i, i-1\}$}.
\end{align*}
Then $\langle v'_i\land v'_{i-1}, u_i\land u_{i+1} \rangle\geq 0$.
\end{lemma}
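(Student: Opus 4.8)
The plan is to interpret the $2\times 2$ determinant $\langle v'_i\land v'_{i-1}, u_i\land u_{i+1}\rangle$ geometrically and reduce everything to a statement in the two-dimensional quotient where only the $i$-th face can vary. First I would exploit the fact that $g=\flip_i f$ means $f$ and $g$ share every face except the one of dimension $i$: so $f_{i-1}=g_{i-1}$, $f_{i+1}=g_{i+1}$, and $f_i\neq g_i$ are the two distinct $i$-faces sitting between $f_{i-1}$ and $f_{i+1}$ (diamond property). Working inside the affine span of $f_{i+1}$ and modding out by the affine span of $f_{i-1}$, the relevant data becomes two-dimensional: the edge vectors $u_i, u_{i+1}$ point from (a point in) $f_{i-1}$ into $f_i$ and then from $f_i$ into $f_{i+1}$, so in the quotient plane $u_i$ points into one of the two ``wedge directions'' determined by $f_i$, and $u_{i+1}$ completes to a basis with positive orientation relative to the interior of $f_{i+1}$.

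Next I would do the same with the dual vectors. The key is that $q(g_j)$ lies in the relative interior of $g_j^\circ$, which for $j\in\{i-1,i,i+1\}$ is a face of $P^\circ$ whose inclusion relations are reversed: $g_{i+1}^\circ\subset g_i^\circ\subset g_{i-1}^\circ$. So $v'_i=q(g_i)-q(g_{i+1})$ points from the smaller dual face into the larger one, and $v'_{i-1}=q(g_{i-1})-q(g_i)$ continues outward; these are the ``dual edge vectors'' of the dual flag $g$ around index $i$ (really $n-1-i$ in $P^\circ$), sitting in the two-dimensional quotient $\operatorname{span}(g_{i-1}^\circ)/\operatorname{span}(g_{i+1}^\circ)$. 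The pairing $\langle\cdot,\cdot\rangle\colon \land^2V^*\times\land^2V\to\R$ descends to this pair of two-dimensional quotient spaces, which are dual to each other (since $f_{i-1}=g_{i-1}$ and $f_{i+1}=g_{i+1}$, the annihilator of $\operatorname{span}(f_{i-1})$ modulo the annihilator of $\operatorname{span}(f_{i+1})$ is exactly the dual of $\operatorname{span}(f_{i+1})/\operatorname{span}(f_{i-1})$). So the sign of $\langle v'_i\land v'_{i-1}, u_i\land u_{i+1}\rangle$ equals the sign of the determinant of the pairing of two bases, one of the quotient plane and one of its dual.

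The heart of the argument is then to check the \emph{orientation compatibility} in this two-dimensional picture: $u_i\land u_{i+1}$ orients the plane $\operatorname{span}(f_{i+1})/\operatorname{span}(f_{i-1})$ according to which side $f_i$ lies on and which side $f_{i+1}$ opens toward, while $v'_i\land v'_{i-1}$ orients the dual plane analogously via the face $g_i=$ the \emph{other} $i$-face. Because $g_i$ is the reflection of $f_i$ across the diamond, the dual cone condition flips exactly once, and the two orientations agree, giving the nonnegative sign. Concretely, since $q(g_i)\in\operatorname{relint}(g_i^\circ)$ one has $\langle q(g_i),p\rangle<1$ for all $p$ in the interior of $P$ and $=1$ on the facet-directions contained in $g_i$; intersecting $g_i$ with $f_{i+1}$ and pairing against $f_i\subset f_{i+1}$ (which is \emph{not} contained in $g_i$) produces a strict inequality of definite sign, and tracking this through the two quotient steps yields $\langle v'_i\land v'_{i-1}, u_i\land u_{i+1}\rangle\geq 0$, with equality only in degenerate configurations at the boundary of the allowed region.

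The main obstacle I expect is bookkeeping the signs consistently through the double quotient: one must pin down, once and for all, compatible orientation conventions on $\operatorname{span}(f_{i+1})/\operatorname{span}(f_{i-1})$ and on its dual so that ``$u_i$ points toward $f_i$'' and ``$v'_i$ points away from $g_{i+1}^\circ$'' are expressed with the correct signs, and then verify that replacing $f_i$ by $g_i$ (the diamond reflection) introduces exactly one sign change on each side, so the product is unchanged and positive. Establishing this cleanly — perhaps by first treating the base case $n-1-i$ small, or by choosing convenient representative points $p(f_j)$, $q(g_j)$ and computing a single explicit $2\times 2$ determinant — is where the real work lies; the linear-algebra pairing formula quoted just before the lemma is the tool that makes the final computation a one-line determinant.
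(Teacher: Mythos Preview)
Your reduction to a two-dimensional picture is in the right spirit, but the proposal has a genuine gap at precisely the point you flag: the ``orientation compatibility'' step is asserted, not proved. Saying that the diamond reflection introduces exactly one sign change on each side so that the product comes out positive is the whole content of the lemma, and nothing in your outline actually establishes it. The concrete inequality you mention, $\langle q(g_i),p\rangle<1$ for $p$ in the interior, controls individual pairings but does not by itself control the sign of the $2\times 2$ determinant, which is a \emph{difference} of two products.

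The paper's proof avoids this difficulty by a concrete trick you do not have. Rather than working directly with $v'_i, v'_{i-1}$, it introduces an auxiliary point $q(f_i)$ lying in the relative interior of $f_i^\circ$ (not $g_i^\circ$) and in the affine plane $E$ spanned by $q(g_{i-1}), q(g_i), q(g_{i+1})$; one first checks such a point exists. Setting $v_i=q(f_i)-q(g_{i+1})$ and $v_{i-1}=q(g_{i-1})-q(f_i)$, the bases $\{v_i,v_{i-1}\}$ and $\{v'_i,v'_{i-1}\}$ of $E$ have opposite orientations, so it suffices to show $\langle v_i\land v_{i-1}, u_i\land u_{i+1}\rangle\leq 0$. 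The payoff is that now $\langle v_i,u_i\rangle=0$, because both $q(f_i)$ and $q(g_{i+1})=q(f_{i+1})$ lie in $f_i^\circ$ and hence pair to $1$ with both $p(f_i)$ and $p(f_{i-1})$. The determinant therefore collapses to the single product $-\langle v_{i-1},u_i\rangle\langle v_i,u_{i+1}\rangle$, and each factor is easily seen to be $\leq 0$ from the defining inequalities of the polar. This vanishing is the missing idea in your plan; without it, tracking orientations through the quotient is possible but substantially harder to make rigorous.
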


\begin{proof}
Let $E$ be the $2$-dimensional affine plane through $q(g_j)$,
$j\in\{i-1, i, i+1\}$.
It lies in the affine span of $g_{i-1}^\circ=f_{i-1}^\circ$,
and since it contains $q(f_{i+1}^\circ)$, it intersects the face $f_i^\circ$
along an interval $I$. This interval cannot lie inside
$f_{i+1}^\circ=g_{i+1}^\circ$, for in this case $E$ would lie inside
the affine span of $g_i^\circ$, and so would not contain $q(f_{i-1}^\circ)$.
Therefore, $I$ passes through the relative interior of $f_i^\circ$.
Choose any point $q(f_i)$ in the interior of $I$,
and define $v_i := q(f_i)-q(g_{i+1})$ and $v_{i-1} := q(g_{i-1})-q(f_i)$.
Since $\{v_i, v_{i-1}\}$ and $\{v'_i, v'_{i-1}\}$
are bases of the linear space of $E$ having opposite orientations,
$v_i\land v_{i-1}=cv'_i\land v'_{i-1}$, for some $c<0$.
Thus it remains to check that
$\langle v_i\land v_{i-1}, u_i\land u_{i+1}\rangle \leq 0$.

Note that, since $\langle v_i, u_i\rangle =0$, we have
\begin{equation*}
\langle v_i\land v_{i-1}, u_i\land u_{i+1}\rangle
    = -\langle v_{i-1},u_i\rangle\langle v_i, u_{i+1}\rangle.
\end{equation*}
Now
\begin{equation*}
\langle v_{i-1}, u_i\rangle
    = \langle q(f_{i-1}) - q(f_i),  p(f_i)-p(f_{i-1})\rangle
    = \langle q(f_{i-1}),  p(f_i)\rangle -1\leq 0,
\end{equation*}
and similarly $\langle v_i, u_{i+1}\rangle\leq 0$. This concludes the proof.  
\end{proof}

\begin{lemma}
\label{lem:contribution_of_flag}
The volume of the Funk outward ball of radius $R$ inside a polytope $P$
satisfies
\begin{equation}
\label{eq:implicit_asymptotic}
\omega_n \volht_P\big(B_P(R)\big) = c_0(P) R^n + c_{1}(P) R^{n-1} + o(R^{n-1}),
\end{equation}
where $c_0(P) = |\flags(P)|/(n!)^2$, and
\begin{equation}
\label{eqn:single_flag}
\secondterm(P)
    = \frac {1} {n! (n-1)!}
      \sum_{f\in\flags(P)}
      \bigg(
          \sum_{i=0}^{n-1} L\big((\flip_i f)_{i},f_{i}\big)
        - \sum_{i=0}^{n-1} L\big((\flip_i f)_{i},f_{i+1}\big)
     \bigg).
\end{equation}
\end{lemma}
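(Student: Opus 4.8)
The plan is to start from the flag decomposition of Lemma~\ref{lem:combine_all_the_flags}, which gives $\omega_n\volht_P(B_P(R))=\tfrac1{n!}\sum_{f,g}I_{f,g}(R)$, the sum ranging over $(f,g)\in\flags(P)\times\flags(P^\circ)$, where $I_{f,g}(R):=\int_{\Delta_\tau^f}\frac{\dee x^g_0\cdots\dee x^g_{n-1}}{x^g_0\cdots x^g_{n-1}}$ and $\tau:=e^{-R}$; it then suffices to compute the two leading terms of each $I_{f,g}(R)$ and add them. For a fixed pair I would reverse the order of the coordinates $x^g_0,\dots,x^g_{n-1}$ so that the $\kappa$-th one becomes $x^g_{n-\kappa}$; in these coordinates $\Delta^f$ is a nondegenerate simplex in the nonnegative orthant with vertices $v_0,\dots,v_n$, where $v_j$ is the image of $p(f_j)$ and $v_{j,\kappa}=1-\dotprod{q(g_{n-\kappa})}{p(f_j)}$, an entry that vanishes exactly when $f_j$ is contained in the face $g_{n-\kappa}^\circ$ of $P$ and is strictly positive otherwise.

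Let $\hat g$ be the flag of $P$ with $\hat g_m:=g_{n-1-m}^\circ$ (so $\hat g=f$ iff $g$ is the flag dual to $f$), and put $\delta(f,g):=\#\{j:\hat g_j\ne f_j\}$. The key combinatorial observation is that $v_j$ has positive coordinates precisely in positions $1,\dots,m_j$ and zeros afterwards, where $m_j:=\min\{m:f_j\subseteq\hat g_m\}$; one always has $m_j\ge j$, with $m_j>j$ exactly when $\hat g_j\ne f_j$. Moreover $v_n=p(f_n)=0$ maps to $(1,\dots,1)$, and $\Delta_\tau^f=(1-\tau)\Delta^f$ maps to the homothety of $\Delta^f$ shrinking towards $v_n$, i.e.\ to $\slopedDelta$ in the sense of Section~\ref{sec:volume_dual}. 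Consequently: when $\delta=0$, $\Delta^f$ has exactly the form \eqref{eqn:Delta}, and since the last coordinate of every $v_i$ with $i<n$ vanishes, $\slopedDelta$ equals the truncation $\Delta_\tau$ of Lemma~\ref{lem:integral_induction}; when $\delta=1$, the diamond property forces $\hat g=\flip_j f$ for the unique differing index $j$, and $\Delta^f$ has this same form with one extra positive entry $v_{j,j+1}$ (namely $v_{0,1}$ if $j=0$ and $v_{n-1,n}$ if $j=n-1$); when $\delta\ge2$, at least two of the $v_j$ carry such an extra entry.

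Applying the integral estimates of Section~\ref{sec:volume_dual} with $k=0$: for $\delta=0$, Lemma~\ref{lem:integral_induction} gives $I_{f,f^\circ}(R)=\tfrac{R^n}{n!}+\tfrac{R^{n-1}}{(n-1)!}A_f+o(R^{n-1})$ with, after identifying $v_{j,j}=e^{L(f_{j-1},f_j)}$ and $v_{j+1,j}=e^{L(f_{j-1},f_{j+1})}$, $A_f=\sum_{i=0}^{n-1}L(f_i,f_{i+1})-\sum_{i=0}^{n-2}L(f_i,f_{i+2})$; for $\delta=1$ with $\hat g=\flip_j f$, Lemma~\ref{lem:integral_induction_neighbour} --- supplemented when $j=n-1$ by Lemma~\ref{lem:awkward_case}, which shows $\int_{\Delta_\tau}\frac{\dee\!\leb_n}{x_1\cdots x_n}-\int_{\slopedDelta}\frac{\dee\!\leb_n}{x_1\cdots x_n}=O(R^{n-2})$ --- gives $I_{f,g}(R)=\tfrac{R^{n-1}}{(n-1)!}\lvert D_j^f\rvert+o(R^{n-1})$, where, with the conventions $L(\emptyset,\cdot)=0=L(\cdot,P)$, $D_j^f:=L(f_{j-1},f_j)+L((\flip_j f)_j,f_{j+1})-L((\flip_j f)_j,f_j)-L(f_{j-1},f_{j+1})$ for all $j\in\{0,\dots,n-1\}$; and for $\delta\ge2$, Lemma~\ref{lem:integral_induction_non_neighbour} gives $I_{f,g}(R)=o(R^{n-1})$. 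It remains to fix the sign of $D_j^f$. For $1\le j\le n-2$ this is precisely Lemma~\ref{lem:wedges_positive}, whose conclusion $\dotprod{v'_j\land v'_{j-1}}{u_j\land u_{j+1}}\ge0$ expands, through the simple-two-vector pairing, to $v_{j,j+1}v_{j+1,j}-v_{j,j}v_{j+1,j+1}\ge0$, i.e.\ $D_j^f\le0$; for $j=0$ and $j=n-1$ an elementary convexity computation does the same --- write $p(f_1)$ (resp.\ $q(f_{n-2}^\circ)$) as a convex combination of the two vertices of the edge $f_1$ of $P$ (resp.\ of the edge $f_{n-2}^\circ$ of $P^\circ$, whose vertices are $f_{n-1}^\circ$ and $((\flip_{n-1}f)_{n-1})^\circ$), and use $\dotprod{q(((\flip_j f)_j)^\circ)}{(\flip_j f)_j}=1>\dotprod{q(((\flip_j f)_j)^\circ)}{f_j}$. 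Hence $\lvert D_j^f\rvert=-D_j^f=L((\flip_j f)_j,f_j)-L((\flip_j f)_j,f_{j+1})+L(f_{j-1},f_{j+1})-L(f_{j-1},f_j)$ in every case.

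Summing through Lemma~\ref{lem:combine_all_the_flags}: only the $\lvert\flags(P)\rvert$ diagonal pairs contribute an $R^n$ term, so $c_0(P)=\lvert\flags(P)\rvert/(n!)^2$; and the $R^{n-1}$ coefficient is $\tfrac1{n!(n-1)!}\sum_f(A_f+\sum_{j=0}^{n-1}\lvert D_j^f\rvert)$. A telescoping reindexing --- the $L(f_i,f_{i+2})$ terms in $A_f$ cancel against those coming from $\sum_j\lvert D_j^f\rvert$, and the remaining $L(f_i,f_{i+1})$ terms collapse to $L(f_{n-1},P)=0$ --- reduces $A_f+\sum_{j=0}^{n-1}\lvert D_j^f\rvert$ to $\sum_{i=0}^{n-1}L((\flip_i f)_i,f_i)-\sum_{i=0}^{n-1}L((\flip_i f)_i,f_{i+1})$, which is the claimed formula \eqref{eqn:single_flag}. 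The step I expect to be the main obstacle is the bookkeeping of the second paragraph: reading off the normal form of $\Delta^f$ in the dual coordinates from the combinatorics of the pair $(f,g)$, and reconciling the homothetic truncation $\Delta_\tau^f$ with the two slightly different truncations used in Section~\ref{sec:volume_dual} (especially the $\slopedDelta$-versus-$\Delta_\tau$ point in the $j=n-1$ case); once the normal forms and the signs are under control, the case analysis and the final summation are routine.
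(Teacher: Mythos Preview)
Your proposal is correct and follows essentially the same approach as the paper: decompose via Lemma~\ref{lem:combine_all_the_flags}, classify pairs $(f,g)$ by how far the dual flag $\hat g$ is from $f$, then invoke Lemmas~\ref{lem:integral_induction}, \ref{lem:integral_induction_neighbour} (with Lemma~\ref{lem:awkward_case} for the facet case), and \ref{lem:integral_induction_non_neighbour} respectively, fix the sign of the neighbour contribution using Lemma~\ref{lem:wedges_positive} plus direct checks at $j=0$ and $j=n-1$, and telescope. Your explicit coordinate reversal and the $\hat g$, $\delta$, $m_j$ bookkeeping make the normal-form identification cleaner than in the paper; the only place you gloss slightly is the claim that Lemma~\ref{lem:wedges_positive} ``expands'' to $v_{j,j+1}v_{j+1,j}-v_{j,j}v_{j+1,j+1}\ge 0$ --- this is correct but requires the short bilinear computation the paper carries out (using $\langle q(f_{j+1}),p(f_j)\rangle=\langle q(f_{j+1}),p(f_{j-1})\rangle=\langle q((\flip_jf)_j),p(f_{j-1})\rangle=1$), not a literal expansion of the wedge pairing.
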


\begin{proof}
Lemma~\ref{lem:combine_all_the_flags} tells us that to find the contribution
of a flag $f$ to the volume, we must consider it paired
with each of the dual flag simplices.
We use the results of the previous section to determine the
contribution of each of these pairings.
This contribution depends on the degree of proximity of the dual flag to $f$.

Indeed, Lemma~\ref{lem:integral_induction_non_neighbour} shows that only
the dual of $f$ and its nearest neighbours are important,
when paired with $f$, while all other dual flags only contribute
to the $o(R^{n-1})$ error term. Recall that a nearest neighbour of a flag
is one that differs in exactly one face.
Moreover, by Lemma \ref{lem:integral_induction_neighbour},
only the pairing of the flag with its own dual contributes to the $R^n$ term.
Together, those observations confirm the asymptotic
formula~\eqref{eq:implicit_asymptotic},
and it remains to compute the values of $c_0(P)$ and $\secondterm(P)$.

We apply Lemma~\ref{lem:integral_induction} to the simplex
having vertices $v_i$, $0\leq i\leq n$ given by
\begin{equation*}
v_{ij} = x^f_{j-1}\big(p(f_i)\big) = 1 - \dotprod {q(f_{j-1})} {p(f_i)},
\qquad \text{for $1\leq j\leq n$},
\end{equation*}
taking $k=0$.
The contribution of $f$, paired with its dual,
to the leading term is $(1/n!) R^n$.
Together with Lemma \ref{lem:combine_all_the_flags},
the value for $c_0(P)$ follows.
The contribution to $\secondterm(P)$ of $f$ paired with its own dual is
\begin{equation*}
\sum_{i=0}^{n-1} L(f_{i},f_{i+1}) - \sum_{i=0}^{n-2} L(f_{i},f_{i+2}).
\end{equation*}

When we consider a nearest neighbour having a different face of dimension $i$,
all that changes is the $(i+1)$-th coordinate of each vertex.
Using Lemmas~\ref{lem:integral_induction_neighbour}
and~\ref{lem:awkward_case}, we get that the contribution to $\secondterm$
of $f$ paired with this neighbouring dual flag simplex is, up to sign,
\begin{equation*}
L\big((\flip_i f)_{i},f_{i}\big) + L(f_{i-1},f_{i+1})
    - L(f_{i-1},f_{i}) - L((\flip_i f)_{i},f_{i+1})
\end{equation*}
when $i\in\{1,\dots,n-1\}$,
while for $i=0$ we get
\begin{equation*}
L\big((\flip_0 f)_{0},f_{0}\big) - L((\flip_0 f)_0,f_{1})\text{.}
\end{equation*}

We will now confirm that we have chosen the sign correctly,
namely that these expressions are positive in all cases.
For $i=0$, this is straightforward to check, so we assume $i\geq 1$.

Denote $f' := \flip_if$. We must check that
\begin{equation*}
\big(1-\langle q(f_i'), p(f_i)\rangle\big)
        \big(1-\langle q(f_{i-1}), p(f_{i+1})\rangle\big)
    \geq 
        \big(1-\langle q(f_{i-1}), p(f_i)\rangle\big)
           \big(1-\langle q(f'_{i}), p(f_{i+1})\rangle\big).
\end{equation*}

First consider the case $i=n-1$.
Then $p(f_{i+1})=0$, and it remains to check that
\begin{equation*}
\langle q(f_{i-1}), p(f_i)\rangle \geq \langle q(f_i'), p(f_i)\rangle
    \iff \langle q(f_{i-1})-q(f_i'), p(f_i)\rangle\geq 0,
\end{equation*}
which holds since $q(f_{i-1})-q(f_i')=c (q(f_i)-q(f_{i-1}))$, for some $c>0$.

For $1\leq i\leq n-2$, set $u_j=p(f_{j})-p(f_{j-1})$ for $j\in\{i, i+1\}$,
and $v'_j=q(f'_j)-q(f'_{j+1})$ for $j\in  \{i-1, i\}$. One computes
\begin{align*}
\langle q(f_i'), p(f_i)\rangle
    &= 1 + \langle v'_i, u_i\rangle, \\
\langle q(f_{i-1}), p(f_{i+1})\rangle
    &= 1 + \langle v_i', u_i\rangle + \langle v_i', u_{i+1}\rangle
         + \langle v_{i-1}', u_i\rangle + \langle v_{i-1}', u_{i+1}\rangle, \\
\langle q(f_{i-1}), p(f_i)\rangle
    &= 1 + \langle v_i', u_i\rangle + \langle v_{i-1}', u_i\rangle, \\
\langle q(f_i'), p(f_{i+1})\rangle
    &= 1 + \langle v_i', u_i\rangle + \langle v_i', u_{i+1}\rangle\text{.}
\end{align*}
Then the desired inequality is easily seen to be equivalent to
\begin{equation*}
\langle v'_i, u_i\rangle \langle v'_{i-1}, u_{i+1}\rangle \geq \langle v'_{i-1},u_i\rangle \langle v'_{i},u_{i+1}\rangle\text{,}
\end{equation*}
which holds by Lemma \ref{lem:wedges_positive}.

Adding up all these contributions, and remembering that $L(f_{n-1},f_{n}) = 0$,
we get the stated formula for $\secondterm(P)$.
\end{proof}

This is illustrated in Figure~\ref{fig:contribution_of_flag}.
The dotted line represents the flag, connecting the full polytope face at
the top to the empty set at the bottom.
The arrows represent the terms of the above expression.
An arrow starting at a face $F$ and ending at face
$G$ represents the term $\pm\log\big(1 - \dotprod {q(G)} {p(F)}\big)$,
with a positive sign if the arrow is full, and a negative sign if the arrow
is dashed.

\begin{figure}
	\input{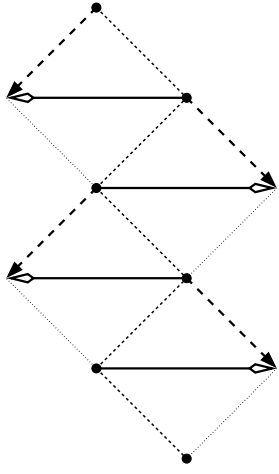}
	\caption{The contribution of a single flag simplex}
	\label{fig:contribution_of_flag}
\end{figure}

It will be convenient to express the sum
in~(\ref{eqn:single_flag}) in a different way.

\begin{lemma}
\label{lem:single_flag_rearranged}
Assume we are given a polytope $P$ with a flag decomposition,
and a flag decomposition of its dual. Then,
the sum over the flags
in~(\ref{eqn:single_flag}) is equal to
\begin{align*}
\sum_{f\in\flags(P)} \Big(
      \sum_{i=0}^{n-1} L\big((\flip f)_i, f_i\big)
    - \sum_{i=0}^{n-1} L\big((\flip f)_i, f_{i+1}\big) \Big).
\end{align*}
\end{lemma}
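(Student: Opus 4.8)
The plan is to establish the identity separately for each index $i\in\{0,\dots,n-1\}$, using only that every $\flip_j$ is a bijection of $\flags(P)$ which alters exclusively the face of dimension $j$ in a flag. Concretely, I would first reduce the claim to showing, for each such $i$,
\begin{equation*}
\sum_{f\in\flags(P)}\big(L((\flip_i f)_i, f_i) - L((\flip_i f)_i, f_{i+1})\big)
 = \sum_{f\in\flags(P)}\big(L((\flip f)_i, f_i) - L((\flip f)_i, f_{i+1})\big),
\end{equation*}
since adding these over $i$ yields precisely the two expressions in the statement (the sum in~(\ref{eqn:single_flag}) on the left, the asserted rearrangement on the right).

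The heart of the argument is a change of summation variable. I would set $\sigma_i := \flip_{i-1}\after\cdots\after\flip_0$ (the empty composition, i.e.\ the identity, when $i=0$), which is a bijection of $\flags(P)$, and substitute $g := \sigma_i(f)$ in the right-hand sum. Two bookkeeping observations then do the work: (a) none of $\flip_0,\dots,\flip_{i-1}$ touches the face of dimension $i$ or of dimension $i+1$, so $(\sigma_i f)_i = f_i$ and $(\sigma_i f)_{i+1} = f_{i+1}$; and (b) factoring $\flip = (\flip_{n-1}\after\cdots\after\flip_{i+1})\after\flip_i\after\sigma_i$, the maps $\flip_{i+1},\dots,\flip_{n-1}$ leave the dimension-$i$ face fixed, whence $(\flip f)_i = (\flip_i(\sigma_i f))_i = (\flip_i g)_i$. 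Feeding (a) and (b) into the substituted sum turns each summand $L((\flip f)_i, f_i) - L((\flip f)_i, f_{i+1})$ into $L((\flip_i g)_i, g_i) - L((\flip_i g)_i, g_{i+1})$, which is exactly the $f$-summand on the left after renaming $g$ back to $f$.

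I do not anticipate a genuine obstacle here: the statement is a purely combinatorial reindexing and no analysis enters. The only place demanding a little care is the verification of (b) at the boundary indices $i=0$ and $i=n-1$, where one of the two composition factors degenerates to the identity, together with the (routine) remark that the $f_{i+1}$-terms with $i=n-1$ vanish identically since $f_n=P$ contains the origin; these should be spelled out to keep the argument airtight, but they are immediate from the definition of $\flip_j$.
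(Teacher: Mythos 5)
Your proposal is correct and is essentially identical to the paper's own proof: both fix $i$, use that $\flip_{i-1}\after\cdots\after\flip_0$ is a bijection of $\flags(P)$ preserving the faces of dimensions $i$ and $i+1$, and note that $(\flip f)_i = (\flip_i g)_i$ for $g = \flip_{i-1}\cdots\flip_0 f$ before reindexing the sum. No further comment is needed.
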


\begin{proof}
Fix $i$, and observe that $\flip_{i-1} \cdots \flip_0$ is a permutation
of the flags. If $g$ and $f$ are two flags such that
$g = \flip_{i-1} \cdots \flip_0 f$, then
\begin{align*}
g_i = f_i,
\qquad
g_{i+1} = f_{i+1},
\qquad \text{and} \quad
(\flip_i g)_i = (\flip f)_i.
\end{align*}
Therefore,
\begin{equation*}
\sum_{g\in\flags(\polytope)}
L\big((\flip_i g)_i, g_i \big)
= \sum_{f\in\flags(\polytope)}
L\big((\flip f)_i, f_i \big),
\end{equation*}
and
\begin{equation*}
\sum_{g\in\flags(\polytope)}
L\big((\flip_i g)_i, g_{i+1} \big)
= \sum_{f\in\flags(\polytope)}
L\big((\flip f)_i, f_{i+1} \big).
\end{equation*}
Summing over all $i$ gives the result.
\end{proof}

These terms are illustrated in Figure~\ref{fig:contribution_of_rearranged_flag}
(for a single flag $f$) using the same convention as before.

\begin{figure}
	\input{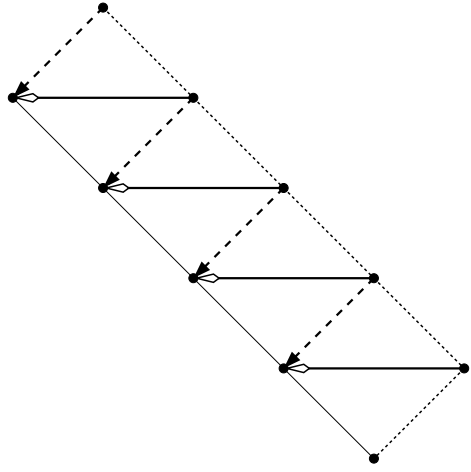}
	\caption{The contribution of a flag simplex after the sum is rearranged}
	\label{fig:contribution_of_rearranged_flag}
\end{figure}

\newcommand\aff{\operatorname{aff}}

In the following lemma, we are using the convention that faces
are closed. We use $\aff F$ to denote the affine hull of a set $F$,
and by $X - Y$ we mean the Minkowski sum of $X$ and $-Y$.

\begin{lemma}
\label{lem:equal_ratio}
Let $F$ and $G$ be faces of $\polytope$ such that $F\subset G$
and $\dim G = \dim F + 1$.
If $p, p' \in G$ and $q, q' \in F^\circ$,
then
\begin{align}
\label{eqn:equal_ratio}
\Big(1 - \dotprod {q} {p}\Big) \Big(1 - \dotprod {q'} {p'}\Big)
    = \Big(1 - \dotprod {q} {p'}\Big) \Big(1 - \dotprod {q'} {p}\Big).
\end{align}
\end{lemma}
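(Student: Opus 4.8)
The plan is to exploit the $1$-dimensionality of $G$ modulo $F$ and the fact that the functionals $q, q' \in F^\circ$ agree on $\aff F$. First I would set up coordinates: since $F \subset G$ with $\dim G = \dim F + 1$, any two points $p, p' \in G$ can be written with $p' = p + t u$ for a fixed direction vector $u$ spanning $\aff G$ modulo $\aff F$, and a scalar $t \in \R$. Then $\dotprod{q}{p'} = \dotprod{q}{p} + t\dotprod{q}{u}$, and similarly for $q'$. Substituting into both sides of~\eqref{eqn:equal_ratio} and expanding, the identity reduces to
\begin{equation*}
-t\dotprod{q}{u}\big(1 - \dotprod{q'}{p}\big) - t\dotprod{q'}{u}\big(1 - \dotprod{q}{p}\big) + t^2 \dotprod{q}{u}\dotprod{q'}{u}
\end{equation*}
appearing symmetrically, so that the difference of the two sides collapses to an expression proportional to $\dotprod{q}{u} - \dotprod{q'}{u}$ times a common factor. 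Hence it suffices to show $\dotprod{q}{u} = \dotprod{q'}{u}$, i.e. that $q$ and $q'$ agree on the direction $u$.

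The key observation is that $q, q' \in F^\circ$ means, by the definition of the dual face, that $q$ and $q'$ both attain the value $1$ on all of $F$ (they lie in the supporting hyperplane $\{\dotprod{\cdot}{x} = 1\}$ of $P^\circ$ associated to the face $F$). Therefore $q - q'$ vanishes on $\aff F - \aff F$, the linear subspace parallel to $\aff F$. But the direction $u$ lies in $\aff G - \aff G$, which is one dimension larger. I would argue that although $u$ itself need not lie in $\aff F - \aff F$, the \emph{difference} $\dotprod{q - q'}{u}$ can still be computed by noting that $q, q'$ also agree at the single extra point needed: pick any $p_0 \in F$ and any $p_1 \in G \setminus \aff F$; then $u$ is proportional to $p_1 - p_0$, so $\dotprod{q - q'}{u} \propto \dotprod{q - q'}{p_1} - \dotprod{q - q'}{p_0} = \dotprod{q - q'}{p_1}$, and we need $\dotprod{q}{p_1} = \dotprod{q'}{p_1}$ — but $p_1 \in G$ and $G^\circ \supset$ the common face containing the constraint is not automatic. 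So the cleaner route is the algebraic one above: write everything in terms of $p, p'$ and the two functionals restricted to $\aff G$.

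Actually the slickest approach avoids even identifying $u$: parametrize $p' = p + w$ with $w \in \aff G - \aff G$, and similarly note both sides of~\eqref{eqn:equal_ratio} are quadratic polynomials in the pair of scalars measuring the displacement of $p'$ from $p$ and of $q'$ from $q$ along their respective $1$-dimensional directions of freedom within $G$ and $F^\circ$. Since $\dim G = \dim F^\circ{}^{\perp}$... I would instead just directly verify: expand the left side as $1 - \dotprod{q}{p} - \dotprod{q'}{p'} + \dotprod{q}{p}\dotprod{q'}{p'}$ and the right side as $1 - \dotprod{q}{p'} - \dotprod{q'}{p} + \dotprod{q}{p'}\dotprod{q'}{p}$; subtracting, the identity is equivalent to
\begin{equation*}
\dotprod{q}{p' - p} + \dotprod{q'}{p' - p} = \dotprod{q}{p}\dotprod{q'}{p'} - \dotprod{q}{p'}\dotprod{q'}{p}.
\end{equation*}
Now use $p' - p \in \aff G - \aff G$ and the fact that $q - q' \perp (\aff F - \aff F)$ while $q + q'$, restricted to the extra dimension, does the bookkeeping. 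The main obstacle, and the only real content, is pinning down that $\dotprod{q}{p' - p} = \dotprod{q'}{p' - p}$: this follows because $q$ and $q'$ both equal $1$ on $F$, hence $q - q'$ annihilates $\aff F - \aff F$, and then one checks the right-hand side above also equals $\dotprod{q}{p' - p}\dotprod{q'}{p - p}$-type cross terms that cancel once we substitute $\dotprod{q'}{p' - p} = \dotprod{q}{p' - p}$. I expect the bookkeeping to take three or four lines once the single fact $\dotprod{q - q'}{\aff F} = 0$ is invoked; that fact is the crux and everything else is routine polynomial algebra.
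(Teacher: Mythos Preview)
Your proposal has a genuine gap: the central claim that $\dotprod{q}{p'-p} = \dotprod{q'}{p'-p}$ (equivalently $\dotprod{q-q'}{p'-p} = 0$) is false. You correctly observe that $q-q'$ annihilates the linear span of $F$, but $p'-p$ lies in the linear span of $G$, which is strictly larger, so the annihilation does not follow. Concretely, take $P = [-1,1]^2$, $F = \{(1,1)\}$, $G = \{1\}\times[-1,1]$, $p=(1,1)$, $p'=(1,-1)$, $q=(1,0)$, $q'=(0,1)$; then $\dotprod{q-q'}{p'-p} = \dotprod{(1,-1)}{(0,-2)} = 2 \neq 0$. (Your expanded equivalence also carries a sign error: the linear part should be $\dotprod{q-q'}{p'-p}$, not $\dotprod{q+q'}{p'-p}$.)

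What you are missing is the dual codimension-one relation. Since $F\subset G$ with $\dim G = \dim F + 1$, dually $G^\circ \subset F^\circ$ with $\dim F^\circ = \dim G^\circ + 1$. The paper exploits both simultaneously: write $p = s + \alpha v$, $p' = s' + \alpha' v$ with $s,s' \in \aff F$ and $v$ the single extra direction in $\aff G$; and write $q = r + \beta u$, $q' = r' + \beta' u$ with $r,r' \in \aff G^\circ$ and $u$ the single extra direction in $\aff F^\circ$. Using that $\aff G^\circ$ pairs to $1$ with $\aff G$ and $\aff F^\circ$ pairs to $1$ with $\aff F$, one gets $\dotprod{r}{s}=1$, $\dotprod{r}{v}=0$, $\dotprod{u}{s}=0$, whence each factor collapses to $1 - \dotprod{q}{p} = -\alpha\beta\dotprod{u}{v}$ and the identity is immediate. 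Your one-sided decomposition cannot produce this factorization; the lemma genuinely needs both the primal and the dual one-step inclusions.
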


\begin{proof}
Because the dimensions of $F$ and $G$ differ by exactly $1$,
we can write $p = s + \alpha v$ and $p' = s' + \alpha' v$,
with $s, s' \in \aff F$ and $\alpha, \alpha' \in \R$,
and where $v$ is some element of $(\aff G \setminus \aff F) - \aff F$.
Likewise, we can write $q = r + \beta u$ and $q' = r' + \beta' u$,
with $r, r' \in \aff G^\circ$ and $\beta, \beta' \in \R$,
and where $u$ is some element of
$(\aff F^\circ \setminus \aff G^\circ) - \aff G^\circ$.

Note that $\dotprod {r} {s} = 1$
and $\dotprod {r} {v} = \dotprod {u} {s} = 0$.
We calculate that $1 - \dotprod {q} {p} = - \alpha\beta \dotprod {u} {v}$,
and the other factors in~(\ref{eqn:equal_ratio}) can be calculated similarly.
The conclusion then follows.
\end{proof}

We are now ready to prove Theorem~\ref{thm:second_term}.

\begin{proof}[Proof of Theorem~\ref{thm:second_term}]
Let $f$ be a flag of $\polytope$.
For each $i\in\{0,\dots,n-2\}$, we apply Lemma~\ref{lem:equal_ratio}
with $F := (\flip f)_i$ and $G := f_{i+1}$.
Observe that $q((\flip f)_{i+1})$ and $q((\flip f)_{i})$ are in $F^\circ$,
and that $p(f_{i+1})$ and $p(f_0)$ are in $G$.
From the lemma,
\begin{align*}
L\big((\flip f)_{i+1}, f_0\big) + L\big((\flip f)_{i}, f_{i+1}\big)
    = L\big((\flip f)_{i+1}, f_{i+1}\big) + L\big((\flip f)_{i}, f_0\big).
\end{align*}
It is clear that no proper face of the flag $f$ is a subset of any proper face
of $\flip f$. Hence, all four of these terms are finite.
Summing over $i \in\{0,\dots,n-2\}$,
and using that $L\big((\flip f)_{n-1}, f_n\big)= 0$
(this is because $p(P)$ is the origin), we get
\begin{align*}
L\big((\flip f)_{n-1}, f_0\big)
+ \sum_{i=0}^{n-1} L\big((\flip f)_{i}, f_{i+1}\big)
= \sum_{i=0}^{n-1} L\big((\flip f)_{i}, f_{i}\big).
\end{align*}
Summing over all flags, and comparing with Lemmas~\ref{lem:contribution_of_flag}
and~\ref{lem:single_flag_rearranged}, we get the result.
\end{proof}

\begin{proof}[Proof of Corollary ~\ref{cor:flag_equality}]
Let $Q$ be a cube of the same dimension as $P$.
Both $P$ and $Q$ have $2^n n!$ flags.
Using Theorem \ref{thm:second_term} we may write
\begin{equation*}
\omega_n \volht_P \big(B_P(R) \big)
    = \frac{2^n}{n!} R^n
        + \frac{n}{(n!)^2}
          \sum_{f\in\flags(P)}
          \log \left(1 - \bigl\langle (\flip f)_{n-1}, {f_0}\bigr\rangle\right)
          R^{n-1}
        + o(R^{n-1}).
\end{equation*}
We have by Theorem~\ref{thm:unconditional_funk_mahler} that
\begin{equation*}
\volht_P \big(B_P(R) \big) \geq \volht_Q \big(B_Q(R) \big),
\qquad\text{for all $R>0$}.
\end{equation*}
As the leading term in the asymptotics for the volume
is the same for $Q$ and $P$, it must hold that
\begin{equation}
\label{eq:wrong_inequality}
\sum_{f\in\flags(P)}
        \log\left(1 - \bigl\langle (\flip f)_{n-1}, {f_0}\bigr\rangle\right)
    \geq
    \sum_{f\in\flags(Q)}
        \log\left(1 - \bigl\langle (\flip f)_{n-1}, {f_0}\bigr\rangle\right).
\end{equation}

For each $f\in\flags(Q)$, one easily verifies that
$\langle (rf)_{n-1}, f_0\rangle=-1$.
Thus the right hand side of \eqref{eq:wrong_inequality} equals $2^nn!\log 2$. 

For each $f\in\flags(P)$, it holds by the central symmetry of $P$
that $\langle (rf)_{n-1}, f_0\rangle\geq -1$,
and so none of the summands on the left-hand-side
of~\eqref{eq:wrong_inequality} exceed $\log2$.
As there are $2^nn!$ summands, it follows that each summand must equal
$\log 2$, that is, $\langle (rf)_{n-1}, f_0\rangle= -1$
for all $f\in \flags(P)$.
Thus $\langle  (rf)_{n-1}, -f_0\rangle= 1$, or equivalently,
$-f_0\in( rf)_{n-1}$, for all $f\in \flags(P)$, concluding the proof.
\end{proof}

\section{The limit of the Funk--Santal\'o point for polytopal Funk geometries
as the radius tends to infinity}
\label{sec:santalo}

Santal\'o~\cite{santalo_affine_invariant}
studied the dependence of the Mahler volume of a convex body
on the point in the body through which the polar is taken.
He showed that there is a unique point in the interior of the body where this
quantity is minimized. This point is now known as the \emph{Santal\'o point}
of the body.

This notion is generalised in~\cite{centro_affine_area_paper},
where, instead of the Mahler volume, the Holmes--Thompson volume of
metric balls of the Funk geometry was considered.
The following theorem appears there.
	
\begin{theorem}[\cite{centro_affine_area_paper}]
\label{thm:funk_santalo_existence}
Let $K$ be a convex body, and let $R>0$.
Then, the function $f_{K,R} \colon K \to \R$, defined by
\begin{align*}
f_{K, R}(x) := \volht_K\big(B_K(x, R)\big)
\end{align*}
is proper and strictly convex, and hence attains its infimum at a unique
point $s_R(K)$ in the interior of $K$.
\end{theorem}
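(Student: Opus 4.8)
The plan is to first represent $f_{K,R}$ as an integral of log-affine functions against a fixed positive measure, and then to read off finiteness, convexity, strict convexity, and properness from that representation. By translation invariance of the Holmes--Thompson volume one has $f_{K,R}(x)=\volht_K(B_K(x,R))=\volht_{K-x}\bigl(\lambda(K-x)\bigr)$ with $\lambda=1-e^{-R}$, so Lemma~\ref{lem:body_function_volume} gives $\omega_n f_{K,R}(x)=\widetilde M_\lambda(\mathbbm 1_{K-x})$. Unwinding the definition of $\widetilde M_\lambda$ with $\phi=\mathbbm 1_{K-x}^\infty$, so that $\mathcal L\phi=h_{K-x}=h_K-\dotprod{x}{\cdot}$, and substituting $z=w-x$, one obtains
\begin{equation}
\label{eqn:funk_santalo_rep}
\omega_n f_{K,R}(x)=\frac{\lambda^n}{n!}\int_{K\times\R^n} e^{-h_K(\xi)}\,e^{\dotprod{(1-\lambda)x+\lambda w}{\xi}}\,\dee w\,\dee\xi .
\end{equation}

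From \eqref{eqn:funk_santalo_rep}, finiteness and convexity on $\interior K$ are immediate. If $x\in\interior K$, pick $\rho>0$ with $B(x,\rho)\subset K$; then $B\bigl((1-\lambda)x+\lambda w,(1-\lambda)\rho\bigr)=(1-\lambda)B(x,\rho)+\lambda w\subset K$ for every $w\in K$, hence $h_K(\xi)-\dotprod{(1-\lambda)x+\lambda w}{\xi}\ge(1-\lambda)\rho\,\|\xi\|$, so the integrand is dominated by $e^{-(1-\lambda)\rho\|\xi\|}$ and the integral converges. For convexity, for each fixed $(w,\xi)$ the map $x\mapsto e^{\dotprod{(1-\lambda)x+\lambda w}{\xi}}$ is the exponential of an affine function of $x$, hence convex, and integrating against the positive measure $\tfrac{\lambda^n}{n!}e^{-h_K(\xi)}\,\dee w\,\dee\xi$ preserves convexity.

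Strict convexity follows because this measure has a density that is strictly positive on all of $K\times\R^n$: if $f_{K,R}$ were affine on a nondegenerate segment $[x_0,x_1]\subset\interior K$, then the convex integrand $x\mapsto e^{\dotprod{(1-\lambda)x+\lambda w}{\xi}}$ would have to be affine on $[x_0,x_1]$ for almost every $(w,\xi)$, i.e.\ $\dotprod{\xi}{x_1-x_0}=0$ for almost every $\xi\in\R^n$, which is absurd. For properness one must check that $f_{K,R}(x)\to+\infty$ as $x$ approaches $\partial K$ from inside. This can be read off from \eqref{eqn:funk_santalo_rep} by testing the $\xi$-integral against a cone around an outer normal at the limit boundary point, or, more concretely, from the elementary bound that $|K^y|$ is at least a $K$-dependent constant times $\operatorname{dist}(y,\partial K)^{-1}$ near $\partial K$ (the polar of a convex body squeezed into a thin slab contains a long spike): as $x\to\partial K$ the scaled body $B_K(x,R)=x+\lambda(K-x)$ contains, for every small $\sigma$, a set of measure at least a constant times $\sigma$ lying within distance $(1-\lambda)\operatorname{dist}(x,\partial K)+\lambda\sigma$ of $\partial K$, and summing these contributions dyadically over $\sigma$ produces a divergence of order $\log\bigl(1/\operatorname{dist}(x,\partial K)\bigr)$.

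Granting these properties, the conclusion is standard: $f_{K,R}$ is a finite, hence continuous, convex function on the bounded open convex set $\interior K$ whose sublevel sets are therefore compact, so the infimum is attained; strict convexity makes the minimizer $s_R(K)$ unique; and properness forces it into the interior. The main obstacle is the properness statement: whereas convexity and strict convexity drop out of \eqref{eqn:funk_santalo_rep} essentially formally, properness requires genuine quantitative control of the blow-up of the integrand (equivalently of $|K^y|$) as the center approaches the boundary, together with a little care to verify that the resulting logarithmic divergence really does tend to infinity.
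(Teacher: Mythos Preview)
The paper does not prove this theorem at all: it is quoted verbatim from \cite{centro_affine_area_paper} and used as a black box, so there is no ``paper's own proof'' to compare against. That said, your argument is worth evaluating on its merits.

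Your integral representation \eqref{eqn:funk_santalo_rep} is correct, and the deductions of finiteness, convexity, and \emph{strict} convexity from it are clean and complete. The properness sketch, however, has a real gap. The two quantitative inputs you use---the lower bound $|K^y|\ge c\,\operatorname{dist}(y,\partial K)^{-1}$ and the claim that $B_K(x,R)$ contains a set of measure $\ge c\,\sigma$ within distance $(1-\lambda)\operatorname{dist}(x,\partial K)+\lambda\sigma$ of $\partial K$---are both calibrated to the polytope case, and together they are \emph{not} enough in general. For the Euclidean disk $K\subset\R^2$ one has $|K^y|=\pi\,(1-|y|^2)^{-3/2}$, so your lower bound $c/\operatorname{dist}(y,\partial K)$ loses a factor of $\operatorname{dist}(y,\partial K)^{-1/2}$; and the relevant ``slab'' in $B_K(x,R)$ is a circular cap of area $\sim c\,\sigma^{3/2}$, not $c\,\sigma$. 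Feeding these into your dyadic sum gives contributions $\sim\sigma_k^{3/2}/\sigma_k=\sigma_k^{1/2}$, a convergent geometric series, so the argument as written yields only a bounded lower bound and does not establish $f_{K,R}(x)\to\infty$.

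What actually makes properness work is that the blow-up exponent of $|K^y|$ and the cap-volume exponent are governed by the \emph{same} local boundary geometry and always conspire to give a divergent integral; equivalently, one can apply Fatou to reduce to showing $\int_{(1-\lambda)x_0+\lambda K}|K^y|\,\dee y=\infty$ for each $x_0\in\partial K$, i.e.\ that $|K^y|$ fails to be locally integrable at every boundary point. This is true, but it is a genuine fact about convex bodies that your sketch does not supply. Your closing caveat that ``properness requires genuine quantitative control'' is well taken---that control is precisely what is missing.
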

	
Proper means that the function converges to infinity as the boundary of $K$
is approached.
We call the point $s_R(K)$ the \emph{Funk--Santal\'o point} of the
convex body $K$, for radius $R$.

It is shown in~\cite{centro_affine_area_paper}
that the Funk--Santal\'o point of a convex body converges to
the Santal\'o point as the radius tends to zero.
It is also shown that, in the case where the absolutely continuous part
of the surface area measure is not zero,
the Funk-Santal\'o point converges, as the radius tends to infinity,
to the point of the body with respect to which the centro-affine area
is minimized.
	
Here, we study the behaviour of the Funk-Santal\'o point as the radius becomes
large in the case of polytopes.
We will see that this behaviour is governed by the second highest order
term of the volume growth at infinity.

Recall that if a sequence $f_j$ of convex functions converges pointwise
on a dense subset $D$ of an open convex set $C$ in $\R^n$,
then the limit function $f$ exists on $C$,
and the convergence is locally uniform on $C$;
see~\cite[Thm.~10.8]{rockafellar_convex_analysis}.

The following lemma concerning the convergence of infima
of convex functions was established in~\cite{centro_affine_area_paper}.

\begin{lemma}
	\label{lem:convergence_of_minima}
	Let $f_j$ be a sequence of lower semicontinuous convex functions
	defined on a convex body $K$ in a finite-dimensional vector space,
	taking values in $\R \union \{\infty\}$.
	Assume that $f_j$ converges pointwise on $\interior K$ to a
	lower semicontinuous convex function $f$
	that is finite on $\interior K$.
	Then, $\inf f_j$ converges to $\inf f$.
	Moreover, if for each $j$, the function $f_j$ attains its infimum
	at $x_j$, and this sequence of points has a limit point $x$,
	then $f$ attains its infimum at $x$.
\end{lemma}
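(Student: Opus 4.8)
The plan is to run the two classical facts already flagged in the text---the locally uniform convergence of convex functions (\cite[Thm.~10.8]{rockafellar_convex_analysis}) and lower semicontinuity of the limit---and to use the convexity of the $f_j$ to prevent the (approximate) minimizers from escaping to $\partial K$ while carrying the infimum down with them. As preliminaries: since $f_j\to f$ pointwise on $\interior K$ and $f$ is finite there, the convergence is locally uniform on $\interior K$ by \cite[Thm.~10.8]{rockafellar_convex_analysis}; being finite and convex on the open set $\interior K$, the limit $f$ is continuous there, so $w_j\to w\in\interior K$ implies $f_j(w_j)\to f(w)$. I would also record that a lower semicontinuous convex function finite on $\interior K$ is continuous along every segment from an interior point to a boundary point (the closure formula for convex functions, \cite{rockafellar_convex_analysis}); in particular $\inf_K f=\inf_{\interior K} f$, and this infimum is attained on the compact set $K$.

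\emph{Convergence of the infima.} For the upper bound: for each $y\in\interior K$ we have $\inf_K f_j\le f_j(y)\to f(y)$, so $\limsup_j \inf_K f_j\le \inf_{\interior K} f=\inf_K f$. For the lower bound, write $m:=\inf_K f$ and $L:=\liminf_j \inf_K f_j$ and aim to show $L\ge m$. Pass to a subsequence with $\inf_K f_j\to L$, choose $x_j\in K$ with $f_j(x_j)\to L$ (exact minimizers when they exist, otherwise $f_j(x_j)\le\inf_K f_j+1/j$), and pass to a further subsequence with $x_j\to x\in K$. If $x\in\interior K$, the preliminaries give $f_j(x_j)\to f(x)\ge m$, so $L\ge m$. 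If $x\in\partial K$, fix $z\in\interior K$ and set $w^t:=(1-t)z+tx$ and $w^t_j:=(1-t)z+tx_j$ for $t\in(0,1)$, noting $w^t\in\interior K$; convexity of $f_j$ gives $f_j(w^t_j)\le(1-t)f_j(z)+tf_j(x_j)$, and letting $j\to\infty$ (using $w^t_j\to w^t$ and $f_j(z)\to f(z)$) yields $f(w^t)\le(1-t)f(z)+tL$; letting $t\to1$ and invoking lower semicontinuity of $f$ gives $f(x)\le L$, whence $L\ge m$ again. Combined with the upper bound, $\inf_K f_j\to\inf_K f$.

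\emph{Location of the limiting minimizer.} Suppose now each $f_j$ attains its infimum at $x_j$, and $x$ is a limit point of the sequence $(x_j)$. Along a subsequence realizing this limit, $f_j(x_j)=\inf_K f_j\to m$ by the previous step. Running the same dichotomy with $L=m$---directly from the preliminaries when $x\in\interior K$, and via the segment $w^t$ and lower semicontinuity when $x\in\partial K$---gives $f(x)\le m$; since trivially $f(x)\ge m$, we get $f(x)=m$, i.e.\ $f$ attains its infimum at $x$.

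\emph{Main difficulty.} The only real subtlety is the boundary case: a priori the (near-)minimizers of $f_j$ could drift toward $\partial K$, where pointwise---let alone uniform---convergence is not assumed, and there they might carry a strictly smaller limiting value. Convexity of the $f_j$ is exactly what rules this out: it lets one transport the good interior estimate out to a boundary point along a segment into $\interior K$, and lower semicontinuity of $f$ then closes the loop. Everything else is routine bookkeeping with subsequences.
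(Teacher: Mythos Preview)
The paper does not give its own proof of this lemma; it is merely quoted from~\cite{centro_affine_area_paper}. Your argument is correct and self-contained: the upper bound $\limsup_j\inf f_j\le\inf f$ is immediate from pointwise convergence on $\interior K$ together with $\inf_K f=\inf_{\interior K}f$, and for the lower bound you track (near-)minimizers $x_j$, using the segment trick $w^t_j=(1-t)z+tx_j$ and convexity of $f_j$ in the boundary case to pass the estimate through locally uniform convergence and then close with lower semicontinuity of $f$. The second assertion is the same dichotomy applied with $L=m$. Since there is no proof in the present paper to compare against, there is nothing further to say.
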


\begin{proof}[Proof of Theorem~\ref{thm:funk_santalo_polytope_convergence}]
By Theorem \ref{thm:second_term}
and Lemma \ref{lem:dual_with_respect_to_point}, we have
\begin{align}\label{eq:funk_santalo_function}
c_1(P, x)
    &= \frac {1} {n!(n-1)!} \sum_{f\in\flags(P)}
       \log \left(
           1 - \left\langle \frac {(\flip f)_{n-1}} {1 - \langle(\flip f)_{n-1}, x\rangle},f_0-x \right\rangle
            \right)
	\\
    &= C - \frac {1} {n!(n-1)!} \sum_{F\in F_{n-1}(P)}
		|\flags(F)| \log \big( 1 - \dotprod {q(F)} {x} \big), \nonumber
	\end{align}
where $C$ is a constant not depending on $x$,
and $F_{n-1}(P)$ denotes the set of facets of $P$,
and $q(F)$ is the vertex of $P^\circ$ corresponding to the facet $F$.

Notice that each non-constant term is minus the logarithm of an affine function,
and hence convex. In fact, the level sets of any one of these terms are
parallel hyperplanes, and the term is strictly convex along line segments
that do not lie in one of these hyperplanes. Since the vertices of $P^\circ$,
that is, the set of dual vectors $q(F)$ with $F\in F_{n-1}(P)$,
affinely generate the whole dual space,  these points separate $\R^n$,
and so there is no non-degenerate line segment lying in a level set of
every one of the terms. We conclude that $c_1(P, x)$ is strictly convex.
	
The function $c_1(P,x)$ is clearly continuous, and finite in $\interior P$.
If $x$ is on the boundary of $P$, then there is at least one facet $F$
for which $x \in  F$, or equivalently $\dotprod {q(F)} {x} = 1$.
Since none of the terms can be $-\infty$, we deduce that $c_1(P, x) = \infty$.
This establishes that $c_1(P, x)$ is proper.

It follows immediately that the infimum of $c_1(P, x)$ is attained
at a unique point, which we denote $s_\infty(P)$.

As $c_1(P,x)$ is smooth in $x$, $s_\infty(P)$ must be its unique stationary point. Thus
\[\sum _{F\in F_{n-1}(P)}|\flags(F)|\frac{1}{1-\langle q(F), s_\infty(P)\rangle}q(F)=0,  \]
and assuming $s_\infty(P)=0$ we obtain the asserted  characterization of $s_\infty(P)$.

For each $R>0$, define a function of $x\in \interior P$ by
\begin{equation*}
f_{P,R}(x)
    := \frac {1} {R^{n-1}}
    \Big(\omega_n \volht\big(B_P(x, R)\big)
           - \frac {1} {(n!)^2}|\flags(P)|R^n \Big).
\end{equation*}
By Theorem~\ref{thm:funk_santalo_existence}, this function is a proper
convex function, for all $R>0$, and its infimum is attained at $s_R(P)$.
By Theorem \ref{thm:second_term} it holds that
\begin{equation*}
\lim_{R\to\infty} f_{P, R}(x) = c_1(P, x),
\qquad\text{for all $x\in \interior P$}.
\end{equation*}
	Thus the conditions of Lemma~\ref{lem:convergence_of_minima} are satisfied,
	and it follows upon applying that lemma that $s_R(P)$ converges to
	$s_\infty(P)$, as $R\to\infty$.
\end{proof}

\section{Stationary points of the 2nd-highest-order-term functional in 2-d}
\label{sec:stationary}

In this section, we prove Theorem \ref{thm:regular_stationary},
that is, that, in dimension two, the only stationary points
of the second highest order term $c_1(P)$ in the expansion
of the volume of balls are the regular polytopes, up to linear transformations. We show moreover that they uniquely maximize $c_1(P, s_\infty(P))$.

The proof will rely on the following lemma concerning polygons where
$\secondterm$ remains stationary under certain perturbations of a single vertex.
Recall that $\polygons^m$ denotes the space of convex polygons in $\R^2$
with $m$ vertices that contain the origin in their interior.

\begin{lemma}
\label{lem:flipping_map}
Let $v_1$, $v_2$, $v_3$ be consecutive vertices of a polygon $K\in \polygons^m$.
Denote by $e_2$, $e_3$ and $e_4$ the consecutive vertices of $K^\circ$
such that
\begin{align*}
\dotprod {e_2} {v_1} = \dotprod {e_2} {v_2} = 1 = \dotprod {e_3} {v_2}
= \dotprod {e_3} {v_3} = \dotprod {e_4} {v_3}.
\end{align*}
For small $t\in\R$, let $K^t$ be the polygon obtained from $K$ by moving $v_2$
by an amount $t(v_2 - v_1)$. If $t$ is small enough,
then $K^t$ is a convex polygon. Assume that
\begin{align*}
\frac {d} {dt} \secondterm(K^t) \Big|_{t=0} = 0.
\end{align*}
Then, there is an element $\gamma$ of $\gltwo$ such that
$\gamma(v_1) = e_4$, $\gamma(v_2) = e_3$, and $\gamma(v_3) = e_2$.
\end{lemma}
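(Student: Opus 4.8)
First I would compute $\dfrac{d}{dt}\secondterm(K^t)\big|_{t=0}$ in adapted coordinates, extract from its vanishing a single scalar identity, and then build $\gamma$ directly from that identity. Throughout I use that the labelling makes $e_i$ the (dual vertex of the) edge of $K$ through $v_{i-1}$ and $v_i$, so that in~(\ref{equ:second_term_dim_two}) each edge $e_i$ contributes the two terms pairing it with $v_{i-2}$ and with $v_{i+1}$; in particular $\dotprod{e_1}{v_1}=1$.

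\emph{Normalizing coordinates.} Both the hypothesis and the conclusion are equivariant under $\gltwo$: the functional $\secondterm$ is $\gltwo$-invariant, and a linear automorphism of $\R^2$ acts compatibly on $K$ and (via its inverse transpose) on $K^\circ$. Since $v_2$ and $v_3$ are consecutive vertices of a polygon with the origin in its interior, they are linearly independent, so I would assume $v_2=(1,0)$ and $v_3=(0,1)$. The incidences $\dotprod{e_3}{v_2}=\dotprod{e_3}{v_3}=1$ then force $e_3=(1,1)$; the incidences $\dotprod{e_2}{v_2}=1$ and $\dotprod{e_4}{v_3}=1$ let me write $e_2=(1,s)$ and $e_4=(r,1)$; and I set $v_1=(a,b)$. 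The remaining incidence $\dotprod{e_2}{v_1}=1$ becomes $a+sb=1$. Finally $s\neq1$ (else $e_2=e_3$) and $b\neq0$ (else the origin lies on the edge of $K$ through $v_1$ and $v_2$), so both $\{v_2,v_3\}$ and $\{e_2,e_3\}$ are bases of $\R^2$.

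\emph{Locating the varying terms.} Next I would identify which summands of~(\ref{equ:second_term_dim_two}) depend on $t$. The displacement $t(v_2-v_1)$ is along the edge of $K$ through $v_1$ and $v_2$, so the supporting line of that edge, hence its dual vertex $e_2$, is unchanged; the only moving objects are $v_2^t:=v_2+t(v_2-v_1)$ and the dual vertex $e_3^t$ of the edge through $v_2$ and $v_3$, the latter determined by $\dotprod{e_3^t}{v_2^t}=\dotprod{e_3^t}{v_3}=1$. Hence exactly four summands vary with $t$: those pairing $e_1$ with $v_2$, $e_4$ with $v_2$, $e_3$ with $v_1$, and $e_3$ with $v_4$. (When $m=3$ some of these coincide, but the computation is entirely analogous.) Two of them have clean closed forms along the family: since $\dotprod{e_1}{v_1}=1$ one gets $1-\dotprod{e_1}{v_2^t}=(1+t)\bigl(1-\dotprod{e_1}{v_2}\bigr)$, whose logarithm has $t$-derivative $1$ at $0$; and a short computation gives $1-\dotprod{e_3^t}{v_1}=\dfrac{(1-a-b)(1+t)}{1+(1-a)t}$, whose logarithm has derivative $a$ at $0$. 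The other two terms are ratios of affine functions of $t$ and are differentiated directly. Adding the four derivatives, multiplying by $\tfrac12$, and using $a+sb=1$, the equation $\frac{d}{dt}\secondterm(K^t)\big|_{t=0}=0$ collapses to the single identity
\[
r=a+b .
\]

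\emph{Building $\gamma$.} Finally I would let $\gamma\in\gltwo$ be the linear map with $\gamma v_2=e_3$ and $\gamma v_3=e_2$, which is invertible because $\{v_2,v_3\}$ and $\{e_2,e_3\}$ are bases. Then, using $v_1=av_2+bv_3$ together with $a+sb=1$ and $r=a+b$,
\[
\gamma v_1 = a\,e_3 + b\,e_2 = (a+b,\ a+sb) = (r,1) = e_4 ,
\]
while $\gamma v_2=e_3$ and $\gamma v_3=e_2$ by construction — exactly the assertion. The only real difficulty is computational bookkeeping: enumerating the four $t$-dependent summands of~(\ref{equ:second_term_dim_two}) with the correct signs, and resisting the temptation to eliminate $b$ via $b=(1-a)/s$ before differentiating, since that substitution is singular at $s=0$ and hides the cancellations, whereas carrying $a+sb=1$ symmetrically keeps the algebra transparent.
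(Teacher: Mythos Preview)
Your proof is correct and follows essentially the same strategy as the paper: identify the four $t$-dependent summands, compute the derivative in adapted coordinates, reduce the stationarity condition to a single scalar identity, and build $\gamma$ explicitly from it. The only difference is cosmetic---the paper normalizes by taking $(e_3,e_2)$ as the coordinate basis (so the dual vertices become simple and the $v_i$ carry the parameters), whereas you normalize with $(v_2,v_3)$ (so the vertices become simple and the $e_i$ carry the parameters); your identity $r=a+b$ is the dual counterpart of the paper's formulas for $w$ and $z$.
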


\begin{proof}
As $t$ varies, so do $v_2$ and $e_3$.
So, there are four terms in the expression for $\secondterm(K^t)$ that change
with $t$. More precisely,
\begin{align*}
\secondterm(K^t)
    &= \log\big(1 - \dotprod {e_1} {v_2(t)}\big)
    + \log\big(1 - \dotprod {e_4} {v_2(t)}\big) \\
    & \quad\quad\quad {} + \log\big(1 - \dotprod {e_3(t)} {v_1}\big)
    + \log\big(1 - \dotprod {e_3(t)} {v_4}\big) + C,
\end{align*}
where $C$ is a constant independent of $t$.
Here, $v_4$ and $e_1$ are the vertices of $K$ and $K^\circ$, respectively,
such that $v_2$, $v_3$, and $v_4$ are consecutive,
and $e_1$, $e_2$, and $e_3$ are consecutive.
So,
\begin{equation*}
\frac {d} {dt} \secondterm(K^t) \Big|_{t=0}
    = -\frac {\dotprod {e_1} {v_2 - v_1}} {1 - \dotprod {e_1} {v_2}}
     - \frac {\dotprod {e_4} {v_2 - v_1}} {1 - \dotprod {e_4} {v_2}}
     - \frac {\dotprod {\dot e_3} {v_1}} {1 - \dotprod {e_3} {v_1}}
     - \frac {\dotprod {\dot e_3} {v_4}} {1 - \dotprod {e_3} {v_4}}.
\end{equation*}
Here, $\dot e_3$ denotes the derivative of $e_3$ with respect to $t$,
at $t=0$.

To simplify calculations, we choose $(e_3, e_2)$ as a coordinate basis
for $\R^2$. With respect to this basis
\begin{align*}
v_1 = (x, 1), \qquad
v_2 = (1, 1), \qquad \text{and} \quad
v_3 = (1, y),
\end{align*}
with $x$ and $y$ in $(-\infty, 1)$.
Moreover, with respect to the dual basis,
\begin{align*}
e_2 = (0, 1), \qquad
e_3 = (1, 0), \qquad \text{and} \quad
e_4 = (w, z),
\end{align*}
where $w$ and $z$ satisfy
\begin{align}
\label{eqn:goes_through_v3}
w + y z = 1.
\end{align}
Since $\dotprod {e_4} {v_3 - v_2} >0$, we have $z<0$.
A short calculation shows that
\begin{align*}
\dot e_3 = \frac {1-x} {1-y} (y, -1).
\end{align*}

Denote the coordinates of $v_4$ by $v_4 = (a, b)$.
Since $\dotprod {e_4} {v_3} = \dotprod {e_4} {v_4} = 1$,
we have $w + y z = a w + b z=1$, and hence
\begin{align*}
\frac {b - y} {1 - a} = \frac {w} {z}.
\end{align*}
Using these coordinates, and that $\dotprod {e_1} {v_1} = 1$, we get
\begin{equation*}
\frac {d} {dt} \secondterm(K^t) \Big|_{t=0}
    = 1
     - \frac {(1-x)w} {1 - w - z}
     + \frac {1 - xy} {1-y}
     - \frac {1-x} {1-y} \cdot \frac {a y - b} {1 - a}.
\end{equation*}
Observing that
\begin{equation*}
-\frac {a y - b} {1 - a}
    = \frac {y (1 - a) + (b - y)} {1 - a}
    = y + \frac {w} {z},
\end{equation*}
we have
\begin{align*}
\frac {d} {dt} \secondterm(K^t) \Big|_{t=0}
    &= 1
     + \frac {(1-x)w} {(1 - y)z}
     + \frac {1 - xy} {1-y}
     + \frac {1-x} {1-y} \Big(y + \frac {w} {z}\Big) \\
    &= \frac {2} {1-y} \Big((1-x) \frac {w} {z} + 1 - xy \Big).
\end{align*}
By assumption, this expression is zero, so we obtain a linear relation between
$w$ and $z$, which combined with~(\ref{eqn:goes_through_v3}) gives us a pair
of equations having the unique solution
\begin{align*}
w = \frac {1 - xy} {1 - y} \qquad\text{and}\quad
z = -\frac {1 - x} {1 - y}.
\end{align*}
The linear map defined by the matrix
\begin{align*}
\gamma := \frac {1} {1 - y} \begin{pmatrix} -y & 1 \\ 1 & -1 \end{pmatrix}
\end{align*}
satisfies $\gamma(v_1) = e_4$, $\gamma(v_2) = e_3$, and $\gamma(v_3) = e_2$.
\end{proof}

We will also need to know that the Funk-Santal\'o point depends smoothly on $P$.

\begin{lemma}\label{lem:smooth_santalo}
The map $s_\infty \colon \polygons^m\to\R^2$ is $C^\infty$-smooth.
\end{lemma}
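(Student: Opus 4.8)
The plan is to exhibit $s_\infty(P)$ as the solution of a smooth implicit equation in $x$ that depends smoothly on $P$, and then to invoke the implicit function theorem together with the uniqueness of the minimiser.

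Recall from the proof of Theorem~\ref{thm:funk_santalo_polytope_convergence} that, for $P\in\polygons^m$ with edges $e_1(P),\dots,e_m(P)$ (equivalently, $P^\circ$ has vertices $e_1(P),\dots,e_m(P)$), and using that in dimension two every edge of a polygon lies in exactly two flags, one has, up to an additive term $C(P)$ depending on $P$ but not on $x$,
\begin{equation*}
c_1(P,x) = C(P) - \sum_{j=1}^m \log\big(1 - \dotprod {e_j(P)} {x}\big),
\end{equation*}
and that $c_1(P,\cdot)$ is smooth, proper and strictly convex on $\interior P$. In particular $s_\infty(P)$ is its unique critical point, so $\Psi(P,s_\infty(P))=0$ for the gradient map
\begin{equation*}
\Psi(P,x) := \nabla_x c_1(P,x) = \sum_{j=1}^m \frac {e_j(P)} {1 - \dotprod {e_j(P)} {x}}.
\end{equation*}

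First I would verify that $\Psi$ is $C^\infty$ on the open set $\{(P,x)\colon P\in\polygons^m,\ x\in\interior P\}$. The key point here is the smooth dependence of the polar vertices on $P$: the edge of $P$ through two consecutive vertices $v,v'$ corresponds to the solution $e$ of the linear system $\dotprod {e} {v}=\dotprod {e} {v'}=1$, whose matrix is invertible precisely because $0\in\interior P$, so $e_j(P)$ is a smooth (indeed rational) function of the vertices of $P$, well defined up to a cyclic relabelling that can be fixed consistently near any given polygon. Since $1-\dotprod {e_j(P)} {x}>0$ for $x\in\interior P$, the denominators never vanish and $\Psi$ is smooth. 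Next I would compute the $x$-derivative,
\begin{equation*}
\partial_x\Psi(P,x) = \sum_{j=1}^m \frac {e_j(P)\otimes e_j(P)} {\big(1-\dotprod {e_j(P)} {x}\big)^2},
\end{equation*}
which is the Hessian of $c_1(P,\cdot)$; it is a sum of positive semidefinite rank-one forms, and since the $e_j(P)$ are the vertices of the full-dimensional polytope $P^\circ$ (which contains the origin in its interior) they span $\R^2$, so the Hessian is positive definite, hence invertible, in particular at $x=s_\infty(P)$.

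With these two facts in hand, the implicit function theorem applied at $\big(P_0,s_\infty(P_0)\big)$ produces a neighbourhood of $P_0$ in $\polygons^m$ and a $C^\infty$ map $P\mapsto x(P)$ with $x(P_0)=s_\infty(P_0)$ and $\Psi(P,x(P))=0$ throughout; by uniqueness of the critical point of the strictly convex proper function $c_1(P,\cdot)$ we must have $x(P)=s_\infty(P)$ on this neighbourhood, so $s_\infty$ is smooth near $P_0$, and hence on all of $\polygons^m$. I expect the only genuinely delicate step to be the joint smoothness of $\Psi$, that is, the smooth dependence $P\mapsto e_j(P)$ of the polar vertices, which rests on the fact that $0$ remains in the interior throughout $\polygons^m$; the invertibility of $\partial_x\Psi$ at $s_\infty(P)$ is essentially a quantitative restatement of the strict convexity already established in Theorem~\ref{thm:funk_santalo_polytope_convergence}.
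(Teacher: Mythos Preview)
Your proposal is correct and follows essentially the same route as the paper: both write $s_\infty(P)$ as the unique zero of the gradient map $\Psi(P,x)=\sum_j e_j(P)/(1-\langle e_j(P),x\rangle)$ and apply the implicit function theorem after checking that the $x$-Jacobian is nonsingular. The only cosmetic difference is that the paper verifies nonsingularity by computing the determinant explicitly as a sum of squares $\sum_{i<j}(x(e_i)y(e_j)-x(e_j)y(e_i))^2/(\cdots)$, whereas you argue more directly that the Hessian $\sum_j a_j\, e_j\otimes e_j$ with $a_j>0$ is positive definite because the $e_j$ span $\R^2$; you are also slightly more explicit than the paper about why the $e_j(P)$ depend smoothly on $P$.
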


\begin{proof}
As $s_\infty$ commutes with translations, it suffices to consider
the open subset $U$ of polygons containing $0$ in their interior.
For $P\in U$, let $e_i$, $1\leq i\leq m$ be a cyclical ordering
of the vertices of $P^\circ$.
By Theorem~\ref{thm:funk_santalo_polytope_convergence}
and  Equation~\eqref{eq:funk_santalo_function},
$s_\infty(P)$ is the unique minimum of the following strictly-convex
smooth function defined on the interior of $P$:
\begin{equation*}
f_P(z)=-\sum_{i=1}^m\log \big( 1-\langle e_i, z\rangle \big).
\end{equation*}
It follows that $s_\infty(P)$ is the unique solution of the equation
$H(P, \cdot) = 0$, where
\begin{equation*}
H(P, z) := \nabla f_P(z) = \sum_{i=1}^m \frac{1}{1-\langle e_i, z\rangle}e_i.
\end{equation*}
Let us verify that $\partial H / \partial z$ has rank $2$,
so that by the implicit function theorem $z=s_\infty(P)$
depends smoothly on $P$.

We write $z = (z_1, z_2)$ and $e_i = (e_i^1, e_i^2)$, for each $i$.
Observe that the Jacobian is
\begin{equation*}
\frac{\partial H_j}{\partial z_k}
    = \sum_{i=1}^m \frac {e_i^j e_i^k} {(1-\langle e_i, z\rangle)^2},
\qquad\text{for $j\in\{1, 2\}$}.
\end{equation*}
For clarity we write $x(e_i), y(e_i)$ for the coordinates of $e_i$.
So, the determinant is given by
\begin{align*}
\det \frac{\partial H}{\partial z}
    &= \sum_{i=1}^m \frac {x(e_i)^2} {(1-\langle e_i, z\rangle)^2}
       \sum_{j=1}^m \frac {y(e_j)^2} {(1-\langle e_j, z\rangle)^2}
     -\left(
       \sum_{i=1}^m\frac{x(e_i)y(e_i)}{(1-\langle e_i, z\rangle)^2}
      \right)^2 \\
    &= \sum_{1\leq i<j\leq m}
       \frac {\big(x(e_i)y(e_j)-x(e_j)y(e_i) \big)^2}
             {\big(1-\langle e_i, z\rangle \big)^2
              \big(1-\langle e_j, z\rangle \big)^2}.
\end{align*}
If $\partial H / \partial z$ were not of rank $2$,
then we would have $x(e_i)y(e_j)=x(e_j)y(e_i)$, for all $i$ and $j$,
that is, all the points $e_i$ would lie on a line, which is impossible.
\end{proof}

\begin{lemma}\label{lem:global_max}
The function $f(P)=c_1(P, s_\infty(P))$ has a global maximum on $\polygons^m$, for each $m\geq 3$.
\end{lemma}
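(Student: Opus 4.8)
The plan is to normalize by the affine group, bound $f$ from above, and then run a compactness argument whose only delicate point is ruling out degeneration of a maximizing sequence.

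First I would observe that $f$ is invariant under the affine group: the Funk geometry of an affine image $\gamma P$ is the $\gamma$-image of that of $P$, and the Holmes--Thompson volume is affinely invariant by Proposition~\ref{prop:projective_invariant}, so $c_1(\gamma P,\gamma x)=c_1(P,x)$; hence $s_\infty$ is affinely equivariant and $f(\gamma P)=f(P)$. I may therefore assume that $P$ is in John position, so that $B(0,1)\subseteq P\subseteq B(0,2)$ and in particular $0\in\interior P$. Then $c_1(P,0)$ is the sum in~\eqref{equ:second_term_dim_two}, whose terms are of the form $\log(1-\dotprod{e_i}{v_j})$, with the $v_j$ the vertices of $P$ and the $e_i$ those of $P^\circ$; since $\|v_j\|\le 2$ and $\|e_i\|\le 1$, each term is at most $\log 3$, so $f(P)=c_1(P,s_\infty(P))\le c_1(P,0)\le m\log 3$. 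Thus $\sup_{\polygons^m}f$ is finite; it is clearly $>-\infty$, since $f(\regular^m)$ is finite.

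Next I would take a maximizing sequence $P_k$, each placed in John position. Since their vertices lie in $B(0,2)$, after passing to a subsequence $P_k\to P_\infty$ in the Hausdorff metric for some convex body $P_\infty\supseteq B(0,1)$, with the labelled vertices $v_j^{(k)}\to v_j^\infty$ and, along a further subsequence, the dual vertices $e_i^{(k)}\to e_i^\infty$. The body $P_\infty$ has at most $m$ vertices. If it has exactly $m$, then $P_\infty\in\polygons^m$ and, since $c_1(P,x)$ is jointly continuous in $(P,x)$ and $s_\infty$ is continuous (Lemma~\ref{lem:smooth_santalo}), $f$ is continuous at $P_\infty$, so $f(P_\infty)=\lim f(P_k)=\sup f$ and the maximum is attained.

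The remaining case --- that $P_\infty$ has fewer than $m$ vertices --- is the step I expect to be the main obstacle; the aim is to rule it out by forcing one term of $c_1(P_k,0)$ down to $-\infty$, which is incompatible with $f(P_k)\le c_1(P_k,0)$ and $f(P_k)\to\sup f>-\infty$. Analysing the limiting boundary --- using that a Hausdorff limit of edges of $P_k$ lies on a supporting line of $P_\infty$, hence inside a single edge of $P_\infty$ --- I expect the dichotomy that either (a) two consecutive vertices of $P_k$ collide, say $v_a^\infty=v_{a+1}^\infty$, or (b) a vertex $v_j$ of $P_k$ straightens out, so that the dual vertices of its two incident edges collide, say $e_{j-1}^\infty=e_j^\infty$. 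In case (a), let $e^{(k)}$ be the dual vertex corresponding to the edge $\conv\{v_{a-1}^{(k)},v_a^{(k)}\}$; then $\dotprod{e^{(k)}}{v_a^{(k)}}=1$ for all $k$ and $v_{a+1}^\infty=v_a^\infty$, so $\dotprod{e^{(k)}}{v_{a+1}^{(k)}}\to 1$, and the term of $c_1(P_k,0)$ pairing $e^{(k)}$ with $v_{a+1}^{(k)}$ tends to $-\infty$. In case (b), since $v_{j-1}^{(k)}$ lies on the edge whose dual vertex is $e_{j-1}^{(k)}$, we have $\dotprod{e_{j-1}^{(k)}}{v_{j-1}^{(k)}}=1$, and as $e_j^\infty=e_{j-1}^\infty$ this gives $\dotprod{e_j^{(k)}}{v_{j-1}^{(k)}}\to 1$, so the term pairing $e_j^{(k)}$ with $v_{j-1}^{(k)}$ tends to $-\infty$. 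Since every term of $c_1(P_k,0)$ is at most $\log 3$ by the John normalization, in either case $c_1(P_k,0)\to-\infty$, giving the desired contradiction. Hence $P_\infty$ is a genuine $m$-gon and the supremum is attained. Everything except the boundary analysis yielding the dichotomy (a)/(b) is routine.
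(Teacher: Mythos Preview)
Your proposal is correct and follows essentially the same approach as the paper: affine invariance reduces to John position, the inclusion $B(0,1)\subseteq P\subseteq B(0,2)$ gives the uniform upper bound $\log 3$ on each summand of $c_1(P,0)$, and degeneration to a polygon with fewer than $m$ vertices is handled via the dichotomy ``two consecutive vertices collide'' versus ``two consecutive dual vertices collide'', forcing one summand of $c_1(P,0)\geq f(P)$ to $-\infty$. The paper phrases the compactness step slightly differently (working on the closure $\overline{X_m}=X_m\cup X_{m-1}\cup\cdots\cup X_3$ via Blaschke selection) but the content is identical; your dichotomy (a)/(b), which you flag as the main point, is exactly what the paper uses, at the same level of justification.
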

\begin{proof}
	
Let $X_m$ denote the set of $m$-sided convex polygons $P\subset\R^2$
in John's position. Recall that $P$ is said to be in John's position if the John ellipse of $P$---the unique maximal
volume ellipse it contains---is the unit Euclidean ball centered at the origin.

Since $f(P)$ is invariant under affine transformations of $P$,
and any $P\in\polygons^m$ has an affine image in John's position,
it suffices to show that $f(P)$ attains a global maximum on $X_m$.

Let $B$ denote the Euclidean unit disc around the origin.
It follows from John's theorem that each polygon $P\in X_m$
is contained in $2 B$. Also, as $P$ contains $B$, it holds that
the polar body $P^\circ$ lies inside $B$.
We conclude that, for all $v\in P$ and $e\in P^\circ$,
one has $|\langle e, v\rangle|\leq 2$.
It follows that each summand in the expression
in~\eqref{equ:second_term_dim_two} for $c_1(P)=c_1(P, 0)$ is bounded
from above by $(1/2)\log 3$.

Let $v_1,\dots, v_m$ by the consecutive vertices of $P$,
with cyclical index, and let $e_1,\dots, e_m$ be the vertices of $P^\circ$
such that $\langle e_i, v_i\rangle=\langle e_i, v_{i-1}\rangle=1$, for all $i$.

By Blaschke's selection theorem, $X_m$ is precompact within the space
$\mathcal K(\R^2)$ of convex bodies equipped with the Hausdorff distance.
Its closure $\overline X_m$ in $\mathcal{K} (\R^2)$
is $X_m \union \partial X_m$,
where $\partial X_m := X_{m-1} \union \dots \union X_3$
is the geometric boundary of $X_m$.

Suppose now that $P \in X_m$ approaches the boundary $\partial X_m$.
This implies that there is some $i$ for which either $|v_i - v_{i+1}|$
or $|e_i - e_{i+1}|$ tends to zero.
If the former is the case, then
\begin{equation*}
1-\langle e_{i}, v_{i+1}\rangle = \langle e_{i}, v_i-v_{i+1}\rangle
\end{equation*}
tends to zero.
Similarly, in the latter case, $1 - \langle e_{i+1}, v_{i-1}\rangle$
tends to zero.

We conclude that at least one of the summands
in~\eqref{equ:second_term_dim_two} for $c_1(P)$ tends to $-\infty$.
Since we have seen that each of the summands is bounded above,
we conclude that $c_1(P)$ tends to $-\infty$,
as $P$ approaches the boundary.
It follows that $f(P)$ also tends to the same limit as $P\to\partial X_m$, 
since
\begin{equation*}
f(P)=c_1(P, s_\infty(P))\leq c_1(P,0)=c_1(P),
\end{equation*}
by the definition of $s_\infty(P)$.

Using this, and that the function $f(P)$ is continuous on $X_m$,
we deduce that it attains a global maximum in $X_m$,
and thus in $\mathcal P^m$, concluding the proof.
\end{proof}

\begin{proof}[Proof of Theorem~\ref{thm:regular_stationary}]
Let the function $\secondterm$ be stationary at a polygon $K \in \polygons^m$.
Let $v_1, \dots, v_m$ be the consecutive vertices of $K$
with cyclical index, and $e_1,\dots, e_m$ the consecutive vertices of $K^\circ$
as in Lemma~\ref{lem:flipping_map},
so that $\langle e_i, v_i\rangle=\langle e_i, v_{i-1}\rangle=1$,
for all $i$.
By that lemma, there is an element $\gamma$
of $\gltwo$ such that the triple of vertices
$(v_1, v_2, v_3)$ is mapped to the triple $(e_4, e_3, e_2)$ of dual vertices.
We apply the same lemma again, this time to $K^\circ$, to get that there
exists another element $\gamma'$ of $\gltwo$ such that
the triple $(e_2, e_3, e_4)$ is mapped to $(v_4, v_3, v_2)$.
Composing these maps, we get an element $\gamma''$ of $\gltwo$ taking the
triple of points $(v_1, v_2, v_3)$ to $(v_2, v_3, v_4)$.
Similarly, we have an element of $\gltwo$ taking the triple $(v_2, v_3, v_4)$
to $(v_3, v_4, v_5)$. Note that this is actually the same map,
because it agrees with $\gamma''$ on $v_2$ and $v_3$.
Proceeding inductively, we get that $\gamma''$ takes $v_j$ to $v_{j+1}$,
for all $j$.
It follows that $(\gamma'')^m$ is the identity map.
The iterates $(\gamma'')^k$; $k\in\{0, \dots, m-1\}$ form a finite subgroup
of $\gltwo$,
and are therefore contained in some maximal compact subgroup conjugate to
the orthogonal group $\orthtwo$. Thus, we may write $\gamma''=g h g^{-1}$,
for some $g\in \gltwo$ and $h\in \orthtwo$.
So, $K = g \regular_m$, where $\regular_m$ is the regular polygon
$\regular_m := \conv\big\{h^k (g^{-1}v_1)
: \text{$k \in \{0, \dots, m-1\}$}\big\}$.

It remains to show that $P=\regular_m$ maximizes $c_1(P, s_\infty(P))$
among all $P\in\polygons^m$ with $s_\infty(P)$ at the origin,
uniquely up to linear transformations.

By Lemma \ref{lem:smooth_santalo}, $s_\infty \colon \polygons^m\to\R^2$
is smooth. As it commutes with translations, it is everywhere submersive.
It follows that $Y_m:=\{P\in\polygons^m : s_\infty(P)=0\}$
is a smooth submanifold of $\polygons^m$. 

By Lemma \ref{lem:global_max}, the function $c_1(P, s_\infty(P))$
has a global maximum on $\polygons^m$.
In fact, since it is affinely invariant,
the maximum, which we denote by $P_0$, can be taken to be in $Y_m$.
Note that on $Y_m$, we have $c_1(P, s_\infty(P))=c_1(P)$.

We can write the tangent space at $P_0$ as
\begin{equation*}
T_{P_0}\polygons^m=T_{P_0}Y_m\oplus T_{P_0}(G P_0),
\end{equation*}
where $G=\R^2$ is the group of translations of $\R^2$.

It follows from the definition of $s_\infty$ that $P_0$ is a local minimum
of $c_1(P)$ in the directions of $T_{P_0}(G P_0)$,
while by the above it is a local maximum in the directions of $T_{P_0}Y_m$.
Thus, $P_0$ is a stationary point of $c_1(P)$ in $\polygons^m$.
By the first part of the theorem,
$P_0$ coincides with $\regular_m$ up to a linear transformation.
The assertion of the theorem readily follows.
\end{proof}

Next we consider the derivative of $V_K(\lambda) := \omega_n \volht_P(B_K(R))$,
where the radius $R$ is related to $\lambda$ by $R = -\log(1-\lambda)$.
In general, this derivative can be written as an integral over the boundary
of the ball.
For $K=P\in \polygons^m$, this integral can be computed explicitly, as follows.

Let $v_1,v_2,\dots, v_m$ be the consecutive vertices of  $P$
with cyclical index, and $e_1,\dots, e_m$ the consecutive vertices
of $P^\circ$, so that $\langle e_i, v_i\rangle=\langle e_i, v_{i-1}\rangle=1$,
for all $i$. For each $i$ and $j$,
write $\oneminusprod_{ij} := 1 - \lambda \dotprod {e_j} {v_j}$,
understood as a function of $\lambda$.

\begin{proposition}
It holds that
\begin{align*}
\frac {dV_K(\lambda)} {d\lambda}
    &= \sum_{i,j} \lambda\cdot 
           \Bigl| \log \frac
		{ \oneminusprod_{i,j} \, \oneminusprod_{i+1, j+1} }
		{ \oneminusprod_{i,j+1} \, \oneminusprod_{i+1, j} }
		\Bigr| \cdot \frac
		{|\dotprod {e_i} {v_j} \dotprod {e_{i+1}} {v_{j+1}}
			- \dotprod {e_i} {v_{j+1}} \dotprod {e_{i+1}} {v_j} |}
		{| \oneminusprod_{i,j} \, \oneminusprod_{i+1,j+1}
			- \oneminusprod_{i,j+1} \, \oneminusprod_{i+1,j} |} 
\\& \!\!\!\!\!\!\!\!\!\!\!\!\!\!\!\! = \sum_{i,j}
    \Bigl| \log \frac
        { \oneminusprod_{i,j} \, \oneminusprod_{i+1, j+1} }
        { \oneminusprod_{i,j+1} \, \oneminusprod_{i+1, j} }
    \Bigr|
    \cdot \left|\lambda -
          \frac {\langle e_i, v_j\rangle +\langle e_{i+1}, v_{j+1}\rangle
              -\langle e_{i}, v_{j+1}\rangle -\langle e_{i+1}, v_{j}\rangle}
                {\dotprod {e_i} {v_j} \dotprod {e_{i+1}} {v_{j+1}}
              - \dotprod {e_i} {v_{j+1}} \dotprod {e_{i+1}} {v_j}  }
          \right|^{-1}.
\end{align*}
\end{proposition}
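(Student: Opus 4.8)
The plan is to compute $dV_K(\lambda)/d\lambda$ by differentiating directly under the integral sign, reducing the result to a boundary integral, and then evaluating the resulting one-variable integrals in closed form. Recall that $B_P(0,R)=\lambda P$ and that, by the definition of the Holmes--Thompson volume, $V_K(\lambda)=\omega_2\volht_P(\lambda P)=\int_{\lambda P}|P^y|\,\dee\!\leb_2(y)$; write $g(y):=|P^y|$. Since $\lambda<1$ and $0\in\interior P$, the set $\lambda P$ is compact and contained in $\interior P$, where $g$ is a smooth bounded rational function of $y$, so differentiating under the integral sign is legitimate. Substituting $y=\lambda z$ gives $V_K(\lambda)=\lambda^2\int_P g(\lambda z)\,\dee\!\leb_2(z)$, and applying the divergence theorem on $P$ to the vector field $z\mapsto g(\lambda z)\,z$ yields
\begin{equation*}
\frac{dV_K(\lambda)}{d\lambda}=\lambda\int_{\partial P} g(\lambda z)\,\dotprod{z}{\nu(z)}\,\dee S(z),
\end{equation*}
with $\nu$ the outer unit normal; equivalently one decomposes $\lambda P$ into the triangles $\conv\{0,\lambda v_{j-1},\lambda v_j\}$ and differentiates each, the only moving facet being the far edge. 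On the edge $[v_{j-1},v_j]$ of $P$ one has $\nu=e_j/|e_j|$ and $\dotprod{z}{\nu(z)}=1/|e_j|$, constant along the edge, so this splits as $\sum_j|e_j|^{-1}\int_{[v_{j-1},v_j]}g(\lambda z)\,\dee S(z)$.

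Next I would evaluate each edge integral. By Lemma~\ref{lem:dual_with_respect_to_point} the vertices of $P^{\lambda z}$ are the points $e_i/(1-\lambda\dotprod{e_i}{z})$, so triangulating $P^{\lambda z}$ from the origin gives
\begin{equation*}
g(\lambda z)=\frac12\sum_i \frac{|\det(e_i,e_{i+1})|}{\bigl(1-\lambda\dotprod{e_i}{z}\bigr)\bigl(1-\lambda\dotprod{e_{i+1}}{z}\bigr)}.
\end{equation*}
Parametrising the edge linearly by $z(t)=(1-t)v_{j-1}+tv_j$, $t\in[0,1]$, each factor $1-\lambda\dotprod{e_i}{z(t)}$ is affine in $t$ with endpoint values $\oneminusprod_{i,j-1}$ and $\oneminusprod_{i,j}$, where $\oneminusprod_{a,b}:=1-\lambda\dotprod{e_a}{v_b}$. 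Thus each summand is an elementary partial-fractions integral: for affine $\phi,\psi$ one has $\int_0^1(\phi\psi)^{-1}\,\dee t=\Lambda^{-1}\log\bigl(\phi(1)\psi(0)/\phi(0)\psi(1)\bigr)$ with $\Lambda=\phi(1)\psi(0)-\phi(0)\psi(1)$, and inserting the endpoint values produces exactly the factor $\bigl|\log(\oneminusprod_{i,j}\oneminusprod_{i+1,j+1}/\oneminusprod_{i,j+1}\oneminusprod_{i+1,j})\bigr| / \bigl|\oneminusprod_{i,j}\oneminusprod_{i+1,j+1}-\oneminusprod_{i,j+1}\oneminusprod_{i+1,j}\bigr|$ of the statement, after the shift $j\mapsto j+1$; the absolute values are consistent because $\Lambda$ and $\log(\phi(1)\psi(0)/\phi(0)\psi(1))$ always have the same sign.

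It then remains to collect the geometric prefactors and to pass to the second form. Two elementary identities suffice. First, computing the area of $\conv\{0,v_{j-1},v_j\}$ as base times height, the height being the distance $1/|e_j|$ from the origin to the edge's supporting line, gives $|v_j-v_{j-1}|=|e_j|\,|\det(v_{j-1},v_j)|$, which absorbs the $|e_j|^{-1}$. Second, the pairing identity recalled just before Lemma~\ref{lem:wedges_positive} reads, in dimension two, $\det(e_i,e_{i+1})\det(v_{j-1},v_j)=\dotprod{e_i}{v_{j-1}}\dotprod{e_{i+1}}{v_j}-\dotprod{e_i}{v_j}\dotprod{e_{i+1}}{v_{j-1}}$, so the product $|\det(e_i,e_{i+1})|\,|\det(v_{j-1},v_j)|$ equals the determinant of scalar products appearing in the numerator of the first displayed formula. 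This gives the first form. Finally, expanding $\oneminusprod_{i,j}\oneminusprod_{i+1,j+1}-\oneminusprod_{i,j+1}\oneminusprod_{i+1,j}$ as a quadratic in $\lambda$ factors it, up to sign, as $\lambda$ times that same determinant of scalar products times the affine factor
\begin{equation*}
\lambda-\frac{\dotprod{e_i}{v_j}+\dotprod{e_{i+1}}{v_{j+1}}-\dotprod{e_i}{v_{j+1}}-\dotprod{e_{i+1}}{v_j}}{\dotprod{e_i}{v_j}\dotprod{e_{i+1}}{v_{j+1}}-\dotprod{e_i}{v_{j+1}}\dotprod{e_{i+1}}{v_j}},
\end{equation*}
which is precisely the quantity reciprocated in the second form; dividing the numerator of the first form by this and cancelling the common determinant gives the second expression.

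The one point needing care is the first step --- justifying differentiation under the integral and the passage to a boundary integral over the corner domain $\lambda P$; this is harmless because $g$ is smooth and bounded on a neighbourhood of the compact set $\lambda P\subset\interior P$ for $\lambda<1$, and the vertices of $\lambda P$ are negligible for the surface integral. Everything after that is the routine, if mildly tedious, accounting of geometric constants described above.
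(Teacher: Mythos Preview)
Your argument is correct and is essentially the paper's own proof: the paper parametrizes each flag triangle radially by $p=s(1-t)v_i+stv_{i+1}$ and invokes the fundamental theorem of calculus in $s$, which is exactly your divergence-theorem reduction to a boundary integral, and thereafter both arguments evaluate the same partial-fractions integral in $t$. You have simply made explicit the two algebraic identities (edge length versus dual vertex norm, and the $\wedge^2$ pairing) that the paper leaves to the reader.
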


\begin{proof}
Observe that the proof of Lemma~\ref{lem:combine_all_the_flags}
can be repeated to show that
\begin{equation*}
\omega_n \volht(B_P(0, R))
        = \frac {1} {2} \sum_{i=1}^m \sum_{j=1}^m
              \int_{\lambda \Delta(0, v_i, v_{i+1})}
              \frac {\dee y_j \dee y_{j+1}} {y_j y_{j+1} },
\end{equation*}
where $y_j = 1 - \langle e_j, \cdot \rangle$,
and $\Delta(0, v_i, v_{i+1})$ is the triangle with the listed vertices.
We then parametrize $p = s(1-t) v_i + stv_{i+1}$,
$0\leq s\leq \lambda$, $0\leq t\leq 1$,
and use the fundamental theorem of calculus to deduce the stated formula.
\end{proof}

Let us finish this section by pointing out that for the regular polygon
$\regular_m$ we have a very simple formula for second order term, as follows:
\begin{align*}
c_1(\regular_m)
    &= 2m \log \Bigl(2\sin \Bigl(\frac \pi m \Bigr)\Bigr) \\
    &= 2m \log \Bigl(\frac{l}{r}\Bigr),
\end{align*}
where $l$ is the length of an edge,
and $r$ is the radius of the circumscribed circle.

\section{Simplices}\label{sec:simplices}

\begin{theorem}
Consider the Funk geometry of the simplex $\Delta_n\subset \R^n$.
The Holmes--Thompson volume of the forward ball $\Delta_n(R)$ of radius $R$ centered
at the barycenter is $1/\omega_n$ times
\begin{equation}
  V_{\Delta_n}(R)=\frac{n+1}{n!} \int_{\Delta_n(R)}\frac{\text{d}x_1\cdots\text{d}x_n}{x_1\cdots x_n},
\end{equation} 
We also have a recursive formula for $n\geq2$,
\begin{equation}
  V'_{\Delta_n}(R)=\frac{n+1}{n}\frac{1}{1-\frac{e^{-R}}{n+1}} V_{\Delta_{n-1}}\biggl(R+\log\Bigl(1+\frac1n-\frac1n e^{-R}\Bigr)\biggr).
\end{equation}
For all $n\geq 1$ it holds for $R\to\infty$ that
\begin{equation}
   V_{\Delta_n}(R)=\frac{(n+1)!}{(n!)^2}R^n\bigl(1+n\log(n+1)R^{-1}+o(R^{-1})\bigr),
\end{equation}
while for $R\to 0$ we have 
\begin{equation}
 V_{\Delta_n}(R)= \frac{(n+1)^{n+1}}{(n!)^2} \Bigl(R^n -\frac{n}{2} R^{n+1} +o(R^{n+1})\Bigr).
\end{equation} 
\end{theorem}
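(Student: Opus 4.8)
The plan is to handle the four assertions in turn, the first two carrying the real content.

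\emph{The integral formula.} I would place $\Delta_n$ as the standard simplex $\{x\in\R^n : x_j\ge 0,\ \sum_{j=1}^n x_j\le 1\}$ and write $\ell_0,\dots,\ell_n$ for its barycentric coordinates, so that $\ell_j=x_j$ for $j\ge 1$ and $\ell_0=1-\sum_{j=1}^n x_j$. For $z$ in the interior, $\Delta_n^z=(\Delta_n-z)^\circ=\conv\{-e_1/z_1,\dots,-e_n/z_n,\ \mathbf 1/(1-\sum_{j=1}^n z_j)\}$, and the matrix-determinant lemma evaluates its volume to $|\Delta_n^z|=\frac{1}{n!\,\ell_0(z)\ell_1(z)\cdots\ell_n(z)}$. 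Since the defining formula for the Holmes--Thompson volume is independent of the auxiliary Lebesgue measure, $V_{\Delta_n}(R)=\int_{\Delta_n(R)}|\Delta_n^z|\,\dee z=\frac{1}{n!}\int_{\Delta_n(R)}\frac{\dee z}{\ell_0\ell_1\cdots\ell_n}$. To reach the stated form I would use that $\Delta_n(R)=\{\ell_k\ge e^{-R}/(n+1)\text{ for all }k\}$ is invariant under the group $S_{n+1}$ of affine symmetries of $\Delta_n$, which permutes the $\ell_k$ and preserves $\dee z=\dee\ell_1\cdots\dee\ell_n$; hence the integrals $\int_{\Delta_n(R)}\dee z/\prod_{j\ne k}\ell_j$ are all equal, and summing over $k$ with $\sum_k \ell_k\equiv 1$ gives $(n+1)\int \dee z/(\ell_1\cdots\ell_n)=\int \dee z/(\ell_0\cdots\ell_n)$. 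This is exactly the claimed formula with $x_j=\ell_j$.

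\emph{The recursion.} I would differentiate $V_{\Delta_n}(R)=\frac{1}{n!}\int_{\{\ell_k\ge\mu\}}\dee z/(\ell_0\cdots\ell_n)$ in $R$, where $\mu=e^{-R}/(n+1)$ satisfies $\mu'=-\mu$. The derivative is a sum of boundary fluxes through the facets $\{\ell_k=\mu\}$ of the ball, all equal by the $S_{n+1}$-symmetry; in the flux through $\{\ell_0=\mu\}$ the factor $|\mu'|=\mu$ cancels the value $\ell_0=\mu$ in the denominator, leaving $\frac{n+1}{n!}\int_{\{\ell_j\ge\mu,\ \sum_{j=1}^n\ell_j=1-\mu\}}\dee\ell_1\cdots\dee\ell_{n-1}/(\ell_1\cdots\ell_n)$. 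The substitution $\hat m_j=\ell_j/(1-\mu)$ identifies this domain with the radius-$R'$ ball in the standard $(n-1)$-simplex, where $e^{-R'}/n=\mu/(1-\mu)$, i.e.\ $R'=R+\log\bigl(1+\tfrac1n-\tfrac1n e^{-R}\bigr)$; the Jacobian contributes exactly one factor $(1-\mu)^{-1}$, and the remaining integral equals $(n-1)!\,V_{\Delta_{n-1}}(R')$ by the integral formula in dimension $n-1$. Collecting the constants, $\tfrac{n+1}{n!}(n-1)!=\tfrac{n+1}{n}$, yields the recursion. I expect this step to be the main obstacle: it requires correctly setting up the boundary-flux computation, recognizing each facet of the ball as a lower-dimensional Funk ball, and tracking precisely the induced radius shift and the Jacobian factor $\bigl(1-e^{-R}/(n+1)\bigr)^{-1}$.

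\emph{The asymptotics as $R\to\infty$.} Here I would invoke Theorem~\ref{thm:second_term}. Since $|\flags(\Delta_n)|=(n+1)!$, the leading coefficient is $(n+1)!/(n!)^2$. For the next one: on a simplex the connection-group generators $\flip_i$ act on the vertex-ordering of a flag by the adjacent transposition $(i,i{+}1)$, so the complete flip $\flip=\flip_{n-1}\after\dots\after\flip_0$ cyclically shifts the vertex order; consequently $(\flip f)_{n-1}$ is always the facet opposite the vertex $f_0$, and, with the barycenter at the origin, $1-\langle (\flip f)_{n-1},f_0\rangle=n+1$ for every flag. Thus $c_1(\Delta_n)=\tfrac{n}{(n!)^2}(n+1)!\log(n+1)$, and rewriting $c_0R^n+c_1R^{n-1}$ as $\tfrac{(n+1)!}{(n!)^2}R^n\bigl(1+n\log(n+1)R^{-1}\bigr)$ gives the claim. (Alternatively this follows by induction from the recursion together with the base case $V_{\Delta_1}(R)=2\log(2e^R-1)=2R+2\log 2+o(1)$, using $\log(1+\tfrac1n-\tfrac1n e^{-R})\to\log\tfrac{n+1}{n}$ and $\log\tfrac{n+1}{n}+\log n=\log(n+1)$.)

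\emph{The asymptotics as $R\to 0$.} With the barycenter at the origin, $\Delta_n(R)=\lambda\Delta_n$ for $\lambda=1-e^{-R}=R-\tfrac{R^2}{2}+O(R^3)$, so $V_{\Delta_n}(R)=\int_{\lambda\Delta_n}|\Delta_n^z|\,\dee z$. The function $z\mapsto|\Delta_n^z|$ is invariant under the symmetries of $\Delta_n$ fixing the barycenter, a group with no nonzero invariant vector, so its gradient vanishes at the barycenter and $|\Delta_n^z|=|\Delta_n^\circ|+O(|z|^2)$ there; hence $V_{\Delta_n}(R)=\lambda^n|\Delta_n|\,|\Delta_n^\circ|+O(\lambda^{n+2})$. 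Finally $|\Delta_n|\,|\Delta_n^\circ|=(n+1)^{n+1}/(n!)^2$ (the Mahler volume of a simplex; for the standard simplex $|\Delta_n|=1/n!$ and $|\Delta_n^\circ|$ equals the value $\tfrac{1}{n!}(n+1)^{n+1}$ of $|\Delta_n^z|$ at the barycenter), and $\lambda^n=R^n-\tfrac n2 R^{n+1}+O(R^{n+2})$, giving $V_{\Delta_n}(R)=\tfrac{(n+1)^{n+1}}{(n!)^2}\bigl(R^n-\tfrac n2 R^{n+1}\bigr)+o(R^{n+1})$.
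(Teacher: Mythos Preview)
Your proposal is correct. The integral formula and the $R\to\infty$ asymptotics match the paper's approach (the paper only says the large-$R$ asymptotics ``follow at once from Theorem~\ref{thm:second_term}''; you spell out the computation of the complete flip on a simplex, which is fine).

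For the recursion and the $R\to0$ asymptotics you take genuinely different routes. The paper obtains the recursion by first passing to a single piece $S_n(R)$ of the barycentric subdivision of $\Delta_n(R)$, applying Fubini in the variable $x_n=s$, and recognising the inner integral as $W_{n-1}$ at a shifted radius; only then does it differentiate. Your boundary-flux argument is more direct: you differentiate the full integral $\frac{1}{n!}\int_{\{\ell_k\ge\mu\}}(\ell_0\cdots\ell_n)^{-1}\,\dee z$ immediately, use the $S_{n+1}$-symmetry to reduce to a single facet, and rescale. Both lead to the same formula; yours avoids the barycentric subdivision entirely, while the paper's layer-cake representation has the minor advantage of giving an integral identity for $V_{\Delta_n}$ itself rather than only for its derivative.

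For $R\to0$, the paper proceeds by induction on $n$ via the recursion, deriving closed-form recurrences for the coefficients $a_n$ and $b_n$. Your argument---the symmetry group of the simplex has no nonzero invariant vector, hence $z\mapsto|\Delta_n^z|$ has vanishing gradient at the barycenter, so $V_{\Delta_n}(R)=\lambda^n|\Delta_n|\,|\Delta_n^\circ|+O(\lambda^{n+2})$ and one only needs the Mahler volume $(n+1)^{n+1}/(n!)^2$---is shorter and bypasses the recursion altogether. The paper's inductive method, on the other hand, would more readily yield further terms in the expansion should one want them.
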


\begin{proof}

We may consider the standard $n$-dimensional simplex $\Delta_n$, with vertices $C_0=0, C_1=e_1,\ldots,C_n=e_n$, where $e_i$ is the standard basis.
We will consider a point $C=\sum_{i=0}^n\alpha_iC_i\in\Delta_n$, with $\alpha_i\geq 0$ and $\sum\alpha_i=1$, whose
barycentric coordinate are therefore $(\alpha_0,\ldots,\alpha_n)$.
Then in standard Euclidean coordinates,
$$
C=(\alpha_1,\ldots,\alpha_n)
$$
Notice that $\alpha_0=0$ corresponds to the face $F_0$ belonging to the
hyperplane $\sum_{i=1}^{n}x_i=1$, and for $1\leq i\leq n$,  $\alpha_i=0$ corresponds
to the face $F_i$ belonging to the hyperplane $x_i=0$.
It follows that the vectors
$$
V_1=(-\alpha_1,0,\ldots,0), V_2=(0,-\alpha_2,0,\ldots,0),\ldots, V_n=(0,\ldots,0,-\alpha_n) 
$$
are unit vector with respect to the Funk metric at $C$.
The same is true for the vector
$$
V_0(\frac{1-\sum_{i=1}^n\alpha_i}n,\ldots,\frac{1-\sum_{i=1}^n\alpha_i}n).
$$
hence for $i=1,\ldots,n$ the hyperplanes $X_i=-\alpha_i$ are the faces of the tangent
unit ball, and so is the hyperplane $\sum_{i=1}^nX_i=1-\sum_{i=1}^n\alpha_i=\alpha_0$.
Then the dual simplex can be seen as the convex hull
of the following vectors in the dual space: 
$$
W_1=(\frac{-1}{\alpha_1},0,\ldots,0),\ldots,W_n=(0,\ldots,\frac{-1}{\alpha_n})
$$
and 
$$
W_0=\frac{1}{\alpha_0}(1,\ldots,1).
$$
The volume of this simplex is the sum of the volumes of the simplices defined
by $0$ and any $n$ of this points.
For instance, the volume defined by $0, W_1,\ldots,W_n$
is exactly
$$
\frac{1}{n!}\frac{1}{\alpha_1\cdots\alpha_n}
$$
hence by adding all these terms
we get the  Holmes--Thompson density in the following form
\begin{equation}\label{densityofnsimplex}
  \beta_{\Delta_n}^*(A)=\frac{1}{n!} \sum_{i=0}^n\frac{1}{\alpha_0\cdots\hat\alpha_i\cdots\alpha_n}=\frac{1}{n!}\frac{1}{\alpha_0\cdots\alpha_n}
\end{equation}

(where the $\hat{}$ means that the term is removed).
Recall that the Funk ball $\Delta_n(R)$ is the image of $\Delta_n$ under the dilation with ratio $(1-e^{-R})$
centered at the barycenter $\frac1{n+1}(1,\ldots,1)$,
then
$$
V_{\Delta_n}(R)=\frac{1}{n!} \sum_{i=0}^n \int_{\Delta_n(R)}\frac{1}{\alpha_0\cdots\hat\alpha_i\cdots\alpha_n} \text{d}x
$$
If for $i=1,\ldots,n$ we use the change of variable, 
$$F_i(x)=(x_1,\ldots,x_{i-1}, 1-\sum_{k=1}^nx_k,x_{i+1},\ldots , x_n)$$
we get that
$$
\int_{\Delta_n(R)}\frac{1}{\alpha_0\cdots\hat\alpha_i\cdots\alpha_n} \text{d}x = \int_{\Delta_n(R)}\frac{1}{\alpha_1\cdots\alpha_n} \text{d}x
$$
which allows us to get the following formula for the volume of the forward ball
of radius $R$ centered at the barycenter of the simplex
\begin{equation}
  \label{eq:Funkvolumesimplex1}
  V_{\Delta_n}(R)=\frac{n+1}{n!} \int_{\Delta_n(R)}\frac{\text{d}x_1\cdots\text{d}x_n}{x_1\cdots x_n}
\end{equation}
in euclidean coordinates.

Denote by \[A_0=\bigl(\frac1{n+1},\ldots,\frac1{n+1}\bigr)\] the barycenter of $\Delta_n$.
Then for $1\leq k\leq n$,
\[ A_k(R)=A_0+ (1-e^{-R})\left(\frac{1}{n-k}\sum_{j=1}^{n-k}e_j-A_0\right)\]
are the vertices of a simplex $S_n(R)$ in the barycentric subdivision of $\Delta_n(R)$. 

Explicitly,
\begin{eqnarray}
 A_{1}(R)&=&\bigl(\frac1n-\frac{e^{-R}}{n(n+1)},\ldots, \frac1n-\frac{e^{-R}}{n(n+1)},\frac{e^{-R}}{n+1}\bigr)\\ 
\nonumber &\ldots& \\ 
  A_k(R)&=& \bigl(\underbrace{\frac1{n-k+1}-\frac{k}{(n+1)(n-k+1)}e^{-R}}_{n-k \text{ times}} ,\underbrace{\frac{e^{-R}}{n+1}}_{k \text{ times}}\bigr)\\
\nonumber &\ldots& \\ 
 A_n(R)&=&\bigl(\frac{e^{-R}}{n+1},\ldots,\frac{e^{-R}}{n+1}\bigr) \nonumber
\end{eqnarray}

Therefore, because of the symmetries involved, using the right equality in
 equation~(\ref{densityofnsimplex}) with $X=x_1+\cdots+x_n$, we have
\begin{equation}
  \label{eq:decompositionBarycentricFunkSimplex}
  \int_{\Delta_n(R)}\frac{\text{d}x_1\cdots\text{d}x_n}{(1-X)x_1\cdots x_n} = (n+1)! \int_{S_n(R)}\frac{\text{d}x_1\cdots\text{d}x_n}{(1-X)x_1\cdots x_n}
\end{equation}

Let us focus on 

\begin{equation}
	\label{eq:deuxiememethode}
	   W_n(R)=\int_{S_n(R)}\frac{\text{d}x_1\cdots\text{d}x_n}{(1-x_1-\cdots-x_n)x_1\cdots x_n},
		\end{equation}
so that 
\[
V_{\Delta_n}(R)=(n+1) W_n(R)\text{.}
\]
We can rewrite $W_n(R)$ using Fubini's theorem and denoting $s=x_n$
\begin{equation*}
  \begin{split}
   W_n(R)&= \int_{\frac{e^{-R}}{n+1}}^{\frac{1}{n+1}} \frac{\text{d}s}{s}\int_{S_{n}(-\log(s(n+1)))\cap\{x_n=s\}} \frac{\text{d}x_1\cdots\text{d}x_{n-1}}{\bigl((1-s)-x_1-\cdots-x_{n-1}\bigr)x_1\cdots x_{n-1}}.
\end{split}
\end{equation*}
Now let us proceed with the change of variable $t_i=x_i/(1-s)$ for $i=1,\ldots,n-1$  in the inner integral. This gives
\begin{equation}
  \begin{split}
    W_n(R)&=\int_{\frac{e^{-R}}{n+1}}^{\frac{1}{n+1}}\frac{\text{d}s}{s(1-s)}\\
 &\int_{S_{n-1}\left(-\log\frac{sn}{1-s}\right)} \frac{\text{d}t_1\cdots\text{d}t_{n-1}}{(1-t_1-\cdots-x_{n-1})\cdot t_1\cdots t_{n-1}} \\
& = \int_{\frac{e^{-R}}{n+1}}^{\frac{1}{n+1}}\frac{\text{d}s}{s(1-s)} W_{n-1}\left(-\log\frac{s\cdot n}{1-s}\right) 
  \end{split}
\end{equation}
Hence the following holds
$$
W'_n(R)=\frac{1}{1-\frac{e^{-R}}{n+1}}W_{n-1}\biggl(-\log \frac{n}{(n+1)e^R-1}\biggr),
$$
from which we get the following differential equation by multiplying by $(n+1)$:
\begin{equation}
  \label{eq:volumesimplexefunk}
  V'_{\Delta_n}(R)=\frac{n+1}{n}\frac{1}{1-\frac{e^{-R}}{n+1}} V_{\Delta_{n-1}}\biggl(R+\log\Bigl(1+\frac1n-\frac1n e^{-R}\Bigr)\biggr),
\end{equation}
as claimed.

Next consider the asymptotic behavior as $R\to 0$. The one dimensional case follows from Lemma \ref{lem:same_hanner}, yielding
\[
V_{\Delta_1}(R)=2 \log(2e^R-1)=2R + 2\log 2 + 2\log(1-\frac{e^{-R}}2)=4R-2R^2+o(R^2).
\]

Next we proceed by induction, utilizing the recursive formula. Writing 
$$
V_{\Delta_n}(R)=a_nR^n+b_nR^{n+1}+o(R^{n+1}),
$$
and taking into account that 
$$
R+\log\Bigl(1+\frac1{n+1}-\frac1{n+1} e^{-R}\Bigr) =\frac{n+2}{n+1}R\bigl(1-R/2(n+1)+o(R)\bigr),
$$
we get that
\begin{equation}
  \label{eq:Inductionsimplexe1}
  \begin{split}
   W_{\Delta_n}(R):=  V_{\Delta_{n}}& \biggl(R+\log\Bigl(1+\frac1{n+1}-\frac1{n+1} e^{-R}\Bigr)\biggr)\\
&=\left(\frac{n+2}{n+1}\right)^na_nR^n + \left(\frac{n+2}{n+1}\right)^n\Bigl(\frac{n+2}{n+1}b_n - a_n\frac{n}{2(n+1)}\Bigr) R^{n+1}\\ &+o(R^{n+1}).
  \end{split}
\end{equation}
Now eq. \eqref{eq:volumesimplexefunk} implies
$$
V'_{\Delta_{n+1}}(R)= \left(\frac{n+2}{n+1}\right)^2\biggl(1-\frac{R}{n+1}+o(R)\biggr)W_{\Delta_n}(R)
$$
to obtain
\begin{multline*}
  V'_{\Delta_{n+1}}(R)= \Bigl(\frac{n+2}{n+1}\Bigr)^{n+2}a_nR^n\biggl(1-\frac{R}{n+1}+o(R)\biggr)\\
  \times \biggl(1 +\frac1{a_n}\Bigl(\frac{n+2}{n+1}b_n - a_n\frac{n}{2(n+1)}\Bigr)R +o(R) \biggr),
\end{multline*}
which finally gives us
\begin{equation}
  \label{eq:inductionsimplexe2}
   V'_{\Delta_{n+1}}(R)=\Bigl(\frac{n+2}{n+1}\Bigr)^{n+2}a_nR^n\biggl(1 +\frac{n+2}{a_n(n+1)}\Bigl(b_n - \frac12a_n\Bigr)R +o(R)\biggr).
\end{equation}
After integration we get
$$
a_{n+1} = \Bigl(\frac{n+2}{n+1}\Bigr)^{n+2} \frac{1}{n+1}a_n
$$
and
$$
b_{n+1}= \Bigl(\frac{n+2}{n+1}\Bigr)^{n+2} \frac{1}{(n+1)}\Bigl(b_n - \frac12a_{n}\Bigr),
$$
that is 
$$
b_{n+1}= \Bigl(\frac{n+2}{n+1}\Bigr)^{n+2} \frac{1}{(n+1)}b_n - \frac12a_{n+1} .
$$
Taking into account that $a_1=4$, those equations readily imply for all $n\geq 2$
we easily deduce by induction that for all $n\geq 2$, \[ a_n=(n+1)^{n+1}/(n!)^2,\qquad
b_{n}=-\frac{n}{2} \frac{(n+1)^{n+1}}{((n)!)^2},\]
as stated.

Finally, the asymptotics as $R\to\infty$ follow at once from Theorem \ref{thm:second_term}, or they can be deduced from the recursive formula as in the $R\to 0$ regime.
\end{proof}

\section{Some further reflections and questions}
\label{sec:questions}

Another conjecture related to flags and Mahler volume,
and which may be related to Funk volume, is the following.

\begin{conjecture}
\label{conj:kalai_volume_growth}
Let $P$ be a centrally symmetric polytope in dimension $n$.
Then,
\begin{equation*}
|\flags(P)| \geq \frac {(n!)^2} {2^n} |P| |P^\circ|.
\end{equation*}
\end{conjecture}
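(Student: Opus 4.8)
The plan is to translate the inequality into the Funk setting and to reduce it, where possible, to known volume--product results. Write $g_P(R) := \omega_n\volht_P\big(B_P(R)\big)$ for the normalised volume of the forward ball of radius $R$ about the origin. As $R\to 0$ one has $g_P(R)/R^n\to |P||P^\circ|$ (this follows since $B_P(0,R)=(1-e^{-R})P$ and $|P^y|\to|P^\circ|$ as $y\to 0$), while $g_P(R)/R^n\to |\flags(P)|/(n!)^2$ as $R\to\infty$ by Theorem~\ref{thm:first_term}. Fix a Hanner polytope $H$ of dimension $n$, and set $\psi_P(R) := g_P(R)/g_H(R)$. By Lemma~\ref{lem:same_hanner} we have $g_H(R)/R^n\to 4^n/n!$ as $R\to 0$ and $g_H(R)/R^n\to 2^n/n!$ as $R\to\infty$, so
\begin{equation*}
\psi_P(0^+) = \frac{n!\,|P||P^\circ|}{4^n}, \qquad
\psi_P(\infty) = \frac{|\flags(P)|}{2^n\,n!},
\end{equation*}
and Conjecture~\ref{conj:kalai_volume_growth} is exactly the assertion $\psi_P(\infty)\geq\psi_P(0^+)$. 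Equivalently, the functional $\Phi(P) := \psi_P(\infty)/\psi_P(0^+) = 2^n|\flags(P)|/\big((n!)^2|P||P^\circ|\big)$ satisfies $\Phi(P)\geq 1$. Note that $\Phi$ is invariant under linear maps and under polarity, equals $1$ on every Hanner polytope, and is multiplicative under Cartesian products: by Lemma~\ref{lem:multiplicative_volume}, $g_{K\times L}(R) = \tfrac{k!\,l!}{(k+l)!}\,g_K(R)g_L(R)$, so $\psi_{K\times L} = \psi_K\psi_L$ (a Cartesian product of Hanner polytopes being Hanner), whence $\Phi(K\times L) = \Phi(K)\Phi(L)$. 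Thus the conjecture says that Hanner polytopes minimise $\Phi$.

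First I would settle the planar case $n=2$, where the statement reads $|\flags(P)|\geq|P||P^\circ|$ for centrally symmetric polygons. A polygon with $m$ vertices has $|\flags(P)| = 2m$, and here $m$ is even with $m\geq 4$. By Meyer--Reisner~\cite{meyer_reisner_mahler_polygons}, $|P||P^\circ|\leq m^2\sin^2(\pi/m)$, with equality only for linear images of the regular $m$-gon; and since $m\sin^2(\pi/m)\leq 2$ for even $m\geq 4$ (with equality only at $m=4$), we obtain $|P||P^\circ|\leq 2m = |\flags(P)|$, equality holding exactly for linear images of the square, i.e. for planar Hanner polytopes. (For $n=3$ the conjecture amounts to $|P||P^\circ|\leq \tfrac{8}{9}\,E(P)$ for centrally symmetric $3$-polytopes, where $E(P)$ is the number of edges and $|\flags(P)| = 4E(P)$; this already appears to require a new sharp volume--product estimate.)

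For general $n$ I would next treat unconditional polytopes. From the proof of Theorem~\ref{thm:unconditional_funk_mahler}, for unconditional $P$,
\begin{equation*}
g_P(R) = \frac{1}{n!}\sum_{j\geq 0}\frac{\lambda^{n+2j}}{(2j)!}\,I_{2j}\big(e^{-h_P}\big), \qquad \lambda = 1-e^{-R},
\end{equation*}
and the same series, with $P$ replaced by a cube $Q$ (a Hanner polytope), computes $g_H(R)$; the Fradelizi--Meyer inequality~\cite{fradelizi_meyer} gives $I_{2j}(e^{-h_P})\geq I_{2j}(e^{-h_Q})$ for every $j$. Comparing leading terms yields $\psi_P(0^+) = I_0(e^{-h_P})/I_0(e^{-h_Q})$, while a Tauberian argument (the series diverges as $\lambda\to 1$, and all coefficients are non-negative) identifies $\psi_P(\infty)$ with $\lim_{j\to\infty} I_{2j}(e^{-h_P})/I_{2j}(e^{-h_Q})$. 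So the conjecture for unconditional $P$ reduces to showing that the ratio $j\mapsto I_{2j}(e^{-h_P})/I_{2j}(e^{-h_Q})$ is non-decreasing, which, using the explicit form of the Fradelizi--Meyer extremizers, becomes a comparison of the weighted averages over multi-indices $I$ with $|I| = j$ that define $I_{2j}$. Establishing this monotonicity is the main obstacle in the unconditional case.

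The principal obstacle in full generality is more serious: away from the unconditional and low-dimensional cases there is no direct bridge between the growth of $g_P$ at infinity (which encodes $|\flags(P)|$) and its behaviour at $0$ (which encodes $|P||P^\circ|$). The only comparison available, $g_P(R)\geq g_H(R)$ for all $R$, is itself Conjecture~\ref{conj:main_conjecture} and, even if granted, yields only $\psi_P\geq 1$ rather than the monotonicity $\psi_P(\infty)\geq\psi_P(0^+)$ we need. Moreover, together with the Mahler conjecture, Conjecture~\ref{conj:kalai_volume_growth} implies $|\flags(P)|\geq 2^n n!$, the flag conjecture, so a proof in full generality cannot be easier than these two open problems. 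A realistic intermediate target is therefore to prove that $\Phi$ has a local minimum at every Hanner polytope, a statement one might attack by adapting the convexity and perturbation techniques of~\cite{centro_affine_area_paper} and of Section~\ref{sec:stationary}.
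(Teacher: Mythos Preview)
This statement is a \emph{conjecture} in the paper, not a theorem: the paper gives no proof and presents it as an open problem in the final section. So there is no ``paper's own proof'' to compare against; your write-up is a partial attack plus an honest assessment of obstacles, and should be read as such.

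Your reformulation is correct and matches what the paper itself observes immediately after stating the conjecture: with $\psi_P(R)=g_P(R)/g_H(R)$ one has $\psi_P(0^+)=n!\,|P||P^\circ|/4^n$ and $\psi_P(\infty)=|\flags(P)|/(2^n n!)$, so the conjecture is precisely $\psi_P(\infty)\ge\psi_P(0^+)$. The paper records this as the limiting case $r\to 0$, $R\to\infty$ of its stronger Conjecture~\ref{conj:our_volume_growth}; you have independently arrived at the same bridge, and your observation that $\Phi$ is invariant under polarity and multiplicative under products is a useful addition.

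Your $n=2$ argument is genuinely a proof and is not in the paper. The steps are sound: $|\flags(P)|=2m$; Meyer--Reisner gives $|P||P^\circ|\le m^2\sin^2(\pi/m)$ for $m$-gons with the Santal\'o point at the center (which for centrally symmetric $P$ is the origin); and $m\sin^2(\pi/m)$ is decreasing for $m\ge 4$ with value $2$ at $m=4$, so $|P||P^\circ|\le 2m$. Equality forces $m=4$ and $P$ a linear image of the square. This settles the planar case.

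Beyond $n=2$ your proposal is a program rather than a proof, and you say so. Two specific gaps are worth flagging. First, in the unconditional route, the claimed Tauberian identification $\psi_P(\infty)=\lim_{j\to\infty} I_{2j}(e^{-h_P})/I_{2j}(e^{-h_Q})$ is not automatic: one needs an Abelian/Tauberian theorem for ratios of divergent power series, which in turn typically presupposes that the coefficient ratio converges---exactly what you are trying to extract. Second, the claim that a full proof ``cannot be easier than'' Mahler and the flag conjecture is imprecise: the conjecture by itself implies neither; what is true is that Mahler together with this conjecture implies the flag conjecture. That makes the problem look hard, but does not formally place it above either one.
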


Kalai talks about this conjecture in a YouTube video,
around 32 minutes in; see~\cite{kalai_video}.
He ascribes it to conversations with Freij, Henze, Schmitt, Ziegler,
about a decade ago, based on computer experimentation.
We might be so bold as to make the following conjecture.

\begin{conjecture}
\label{conj:our_volume_growth}
Let $\Omega$ be a centrally symmetric convex body,
and let $H$ be a Hanner polytope of the same dimension.
Then, for any $r$ and $R$ positive real numbers with $r<R$,
\begin{equation*}
\frac {{\volht_\Omega} \big(B_\Omega(R)\big)}
      {{\volht_\Omega} \big(B_\Omega(r)\big)}
    \geq \frac {{\volht_H} \big(B_H(R)\big)} {{\volht_H} \big(B_H(r)\big)}.
\end{equation*}
\end{conjecture}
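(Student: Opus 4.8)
We outline a possible approach to Conjecture~\ref{conj:our_volume_growth}. Write $\lambda = 1-e^{-R}\in(0,1)$, so that $B_K(R) = \lambda K$, and set $G_K(\lambda) := \omega_n\volht_K(\lambda K)$. Since the factor $\omega_n$ cancels, the inequality for $r<R$ is equivalent to the assertion that $\lambda\mapsto G_\Omega(\lambda)/G_H(\lambda)$ is non-decreasing on $(0,1)$, and the plan is to prove this monotonicity. By the computation in Section~\ref{unconditional} --- which uses only the central symmetry $\Omega = -\Omega$ and not unconditionality --- we have the convergent expansion
\begin{equation*}
G_\Omega(\lambda) = \sum_{j\geq 0} c_j(\Omega)\,\lambda^{n+2j},
\qquad c_j(\Omega) = \frac{1}{n!\,(2j)!}\, I_{2j}\bigl(e^{-\mathbbm 1_\Omega^\infty}\bigr) \geq 0,
\end{equation*}
with $c_0(\Omega) = |\Omega|\,|\Omega^\circ|$; equivalently $c_j(\Omega) = \frac{1}{n!\,(2j)!}\int_0^\infty t^{2j}\,\dee\nu_\Omega(t)$, where $\nu_\Omega$ is the law of $|\dotprod{x}{\xi}|$ under the measure $\mathbbm 1_\Omega(x)\,e^{-h_\Omega(\xi)}\,\dee x\,\dee\xi$. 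By Lemma~\ref{lem:same_hanner}, $G_H(\lambda) = \frac{2^n}{n!}\bigl(\log\frac{1+\lambda}{1-\lambda}\bigr)^n = \sum_{j\geq0}c_j(H)\lambda^{n+2j}$, where $c_j(H)>0$ is the coefficient of $\lambda^{2j}$ in $\frac{4^n}{n!}\bigl(\sum_{k\geq0}\lambda^{2k}/(2k+1)\bigr)^n$.

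For a body $K$ with nonempty interior and $\lambda\in(0,1)$, let $\mu^K_\lambda$ be the probability measure on $\{0,1,2,\dots\}$ with $\mu^K_\lambda(\{j\})\propto c_j(K)\lambda^{2j}$. A direct computation gives $\lambda\,G_K'(\lambda)/G_K(\lambda) = n + 2\,\mathbb E_{\mu^K_\lambda}[j]$, so the monotonicity of $G_\Omega/G_H$ is equivalent to $\mathbb E_{\mu^\Omega_\lambda}[j]\geq\mathbb E_{\mu^H_\lambda}[j]$ for every $\lambda\in(0,1)$; equivalently, $(G_\Omega/G_H)'$ carries the sign of $\sum_{j>k}(j-k)\bigl(c_j(\Omega)c_k(H)-c_k(\Omega)c_j(H)\bigr)\lambda^{2(j+k)}$. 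The conjecture therefore reduces to the following \emph{monotone likelihood ratio} claim:
\begin{equation*}
\text{the sequence } j\longmapsto \frac{c_j(\Omega)}{c_j(H)} = \frac{\int_0^\infty t^{2j}\,\dee\nu_\Omega}{\int_0^\infty t^{2j}\,\dee\nu_H}\ \text{ is non-decreasing.}
\end{equation*}
Granting this, the ratio $\mu^\Omega_\lambda(\{j\})/\mu^H_\lambda(\{j\})$ is a $\lambda$-independent constant times $c_j(\Omega)/c_j(H)$, hence non-decreasing in $j$, so $\mu^\Omega_\lambda$ stochastically dominates $\mu^H_\lambda$, which yields the expectation inequality, and the argument is complete. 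A clean sufficient condition for the key claim is that the Radon--Nikodym derivative $\dee\nu_\Omega/\dee\nu_H$ be a non-decreasing function on $[0,\infty)$.

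Two remarks. Even for unconditional $\Omega$ the key claim is not a formal consequence of what is proved above: the argument of Section~\ref{unconditional}, through the Fradelizi--Meyer inequality~\eqref{eq:ineq_I}, only yields the \emph{termwise} bound $c_j(\Omega)\geq c_j(Q)$ for every $j$, where $Q$ is the cube, that is $c_j(\Omega)/c_j(H)\geq 1$ for all $j$, with equality throughout when $\Omega$ is a Hanner polytope; \emph{monotonicity} of the ratio in $j$ is strictly stronger. On the other hand, Conjecture~\ref{conj:our_volume_growth} together with Mahler's conjecture would imply Conjecture~\ref{conj:main_conjecture}, on letting $r\to 0^+$ (so that $G_\Omega(\lambda)/G_H(\lambda)\geq c_0(\Omega)/c_0(H) = n!\,|\Omega|\,|\Omega^\circ|/4^n\geq 1$); the present conjecture is thus a ``monotone interpolation'' companion to Conjecture~\ref{conj:main_conjecture}, and one should first attempt the unconditional case.

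The principal difficulty is precisely to promote the termwise domination to monotonicity of $j\mapsto c_j(\Omega)/c_j(H)$. I would attack it in one of two ways: (i) prove a one-parameter refinement of the Fradelizi--Meyer inequality that controls the mixed moments $\int_{\R_+^n} x^{2I}e^{-\phi}\,\dee x\int_{\R_+^n}\xi^{2I}e^{-\mathcal L\phi}\,\dee\xi$ against their cube values in a form compatible with the binomial weights $\binom{2j}{2I}$ that assemble the $c_j$; or (ii) connect $\Omega$ to a Hanner polytope along a path of unconditional bodies and show that $\frac{d}{dt}\log\bigl(c_j/c_j(H)\bigr)$ is monotone in $j$ along the path --- or, in the moment picture, show directly that $\dee\nu_\Omega/\dee\nu_H$ is non-decreasing. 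Both routes call for a convexity inequality genuinely beyond those used above, and this is the step I expect to be the real obstacle; the reductions preceding it are routine.
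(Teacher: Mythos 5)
This statement is Conjecture~\ref{conj:our_volume_growth}: the paper states it in section~\ref{sec:questions} as an open problem and offers no proof, only the observation that it implies Conjecture~\ref{conj:kalai_volume_growth} upon letting $r\to0$ and $R\to\infty$. Your write-up does not prove it either, and you say so yourself: everything hinges on the unproved claim that $j\mapsto c_j(\Omega)/c_j(H)$ is non-decreasing. The reductions leading up to that claim are correct. The expansion $G_\Omega(\lambda)=\sum_{j\geq0}c_j(\Omega)\lambda^{n+2j}$ with $c_j(\Omega)=\frac{1}{n!(2j)!}I_{2j}(e^{-\mathbbm 1_\Omega^\infty})\geq0$ indeed uses only central symmetry (the $\cosh$ step in section~\ref{unconditional} needs only $\Omega=-\Omega$, and convergence for $\lambda<1$ follows from $|\dotprod{x}{\xi}|\leq h_\Omega(\xi)$ on $\Omega\times\R^n$); the identity $\lambda G_K'(\lambda)/G_K(\lambda)=n+2\,\mathbb E_{\mu^K_\lambda}[j]$ and the resulting equivalence of the conjecture with $\mathbb E_{\mu^\Omega_\lambda}[j]\geq\mathbb E_{\mu^H_\lambda}[j]$ for all $\lambda\in(0,1)$ are right; the values $c_0(\Omega)=|\Omega||\Omega^\circ|$ and $c_j(H)$ from Lemma~\ref{lem:same_hanner} check out; and monotone likelihood ratio does give stochastic dominance and hence the expectation inequality. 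Your diagnosis of why section~\ref{unconditional} falls short is also accurate: the Fradelizi--Meyer inequality~\eqref{eq:ineq_I} yields only the termwise bound $c_j(\Omega)\geq c_j(H)$ for unconditional $\Omega$, which proves Theorem~\ref{thm:unconditional_funk_mahler} but says nothing about monotonicity of the ratio in $j$. Your remark that letting $r\to0$ alone, together with Mahler's conjecture, would recover Conjecture~\ref{conj:main_conjecture} is likewise correct.

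The genuine gap is therefore exactly where you flag it, and one caution is in order about how you frame it: ``the conjecture reduces to'' the monotone likelihood ratio claim overstates the situation. MLR is strictly stronger than what is needed --- the conjecture is equivalent only to domination of the means of $\mu^\Omega_\lambda$ and $\mu^H_\lambda$ for every $\lambda$, not to stochastic dominance, let alone MLR --- so a counterexample to the ratio monotonicity would not refute the conjecture, and conversely a proof strategy should keep the weaker equivalent formulation available. The further sufficient condition that $\dee\nu_\Omega/\dee\nu_H$ be non-decreasing is one more strengthening on top of that. As a reformulation and a proposed line of attack the argument is sound and honestly labelled, but the statement remains a conjecture: neither the paper nor your proposal establishes it.
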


This is a strengthening of Conjecture~\ref{conj:kalai_volume_growth}.
Indeed, we recover that conjecture from Conjecture \ref{conj:our_volume_growth}
by letting $r$ approach zero, and $R$ approach infinity.

Conjecture~\ref{conj:our_volume_growth} reminds us of the Bishop--Gromov
theorem in Riemannian geometry, but with the inequality reversed. The inequality in Funk geometry directly corresponding to the Bishop-Gromov theorem would be \begin{equation*}
\frac {{\volht_\Omega} \big(B_\Omega(R)\big)}
      {{\volht_\Omega} \big(B_\Omega(r)\big)}
    \leq \frac {{\volht_E} \big(B_E(R)\big)} {{\volht_E} \big(B_E(r)\big)},
\end{equation*}
where $E$ is a centered ellipsoid. This inequality is false:
considering small radii, it would imply the inequality
\begin{equation*}
\int_{\Omega\times \Omega^\circ} \langle x,\xi\rangle^2 \,\dee x \,\dee\xi
    \leq \frac{n}{(n+2)^2} |\Omega| |\Omega^\circ|,
\end{equation*}
which was shown to be false for large $n$
by Klartag~\cite[Proposition 1.5]{klartag_mahler},
even for unconditional bodies.

The paper~\cite{freij_henze_schmitt_ziegler_face_numbers_of_centrally_symmetric_polytopes_produced_from_split_graphs}
studies Hansen polytopes of split graphs. This class contains
polytopes that are only slightly worse than the Hanner polytopes with
respect to the Mahler conjecture, the flag conjecture, and also Kalai's
$3^d$ conjecture concerning the minimal number of faces
of a centrally-symmetric polytope.
It would be interesting to see how closely these polytopes come to minimizing
the volume of Funk balls.

In ~\cite{schutt_polytopes_floating},
it was shown that the number of flags of a polytope determines
the asymptotics of the volume of its floating body. This phenomenon was later studied in greater generality in ~\cite{besau_schutt_werner_flag_numbers_and_floating_bodies}.
Since for smooth bodies the asymptotics is governed by the affine surface
area, the authors consider the flag number to be an analogue of the
affine surface area for polytopes. One could ask if it is possible to
obtain a refinement that depends on the geometry of the polytope,
similarly to how we have the second-highest order term for the Funk volume?

It was conjectured in~\cite{faifman_funk}, that, if one considers
the volume of a ball in a Funk geometry centered about the centroid
of the body, then this quantity is maximised when the body is an ellipsoid.
This is an upper bound counterpart to Conjecture~\ref{conj:main_conjecture}.
It likewise interpolates between two inequalities, this time that of
Blaschke--Santal\'o when the radius goes to zero,
and the Centro-affine Isoperimetric inequality when the radius
tends to infinity; see~\cite{centro_affine_area_paper}.
This upper bound conjecture was established~\cite{faifman_funk}
in the case of unconditional bodies.

\bibliographystyle{plain}
\bibliography{polytopes.bib}

\end{document}